\newtheorem{thm}{Theorem}[section]
\newtheorem{prop}[thm]{Proposition}
\newtheorem{cor}[thm]{Corollary}
\newtheorem{lem}[thm]{Lemma}
\newtheorem{defn}[thm]{Definition}
\newtheorem{rem}[thm]{Remark}
\numberwithin{equation}{section}
\def\bA{{\mathbb A}}
\def\bD{{\mathbb D}}
\def\bE{{\mathbb E}}
\def\bP{{\mathbb P}}
\def\bT{{\mathbb T}}
\def\bV{{\mathbb V}}
\def\Q{{\mathbb Q}}
\def\Z{{\mathbb Z}}
\renewcommand{\epsilon}{{\varepsilon}}
\renewcommand{\phi}{{\varphi}}
\def\cA{{\mathcal A}}
\def\cB{{\mathcal B}}
\def\cC{{\mathcal C}}
\def\cD{{\mathcal D}}
\def\cE{{\mathcal E}}
\def\cF{{\mathcal F}}
\def\cG{{\mathcal G}}
\def\cL{{\mathcal L}}
\def\cM{{\mathcal M}}
\def\cN{{\mathcal N}}
\def\cO{{\mathcal O}}
\def\cP{{\mathcal P}}
\def\cS{{\mathcal S}}
\def\cW{{\mathcal W}}
\def\cZ{{\mathcal Z}}
\def\Coker{{\rm Coker}}
\def\Hom{{\rm Hom}}
\def\id{{\rm id}}
\def\Ker{{\rm Ker}}
\def\Mod{{\rm Mod}}
\def\rank{{\rm rank}}
\def\Spec{{\rm Spec}}
\def\cof{{\textbf{cof}}}
\def\Cat{{\textbf{Cat}}}
\def\PrSh{{\textbf{PrSh}}}
\def\comp{{\textbf{comp}}}
\def\Ho{{\textbf{Ho}}}
\def\Ob{{\text{Ob}}}
\def\Var{{\textbf{Var}}}
\def\Sch{{\textbf{Sch}}}
\def\Sm{{\textbf{Sm}}}
\def\SH{{\textbf{SH}}}
\def\DM{{\textbf{DM}}}
\def\dm{{\text{DM}}}
\def\Ex{{\text{Ex}}}
\def\cons{{\text{cons}}}
\def\sub{{\textbf{sub}}}
\def\Chow{{\text{Chow}}}
\newcommand\triplerightarrow{%
        \mathrel{\vcenter{\mathsurround0pt
                \ialign{##\crcr
                        \noalign{\nointerlineskip}$\rightarrow$\crcr
                        \noalign{\nointerlineskip}$\rightarrow$\crcr
                        \noalign{\nointerlineskip}$\rightarrow$\crcr
                }%
        }}%
}
\title[From Six Functors Formalisms to Derived Motivic Measures]{From Six Functors Formalisms to Derived Motivic Measures} 
\author{Joshua F.~Lieber}
\address{California Institute of Technology, Pasadena \\ USA}
\email{jlieber@caltech.edu}
\begin{document}
\maketitle

\begin{abstract}
    In this paper, we generally describe a method of taking an abstract six functors formalism in the sense of Khan or Cisinski-D\'{e}glise, and outputting a derived motivic measure in the sense of Campbell-Wolfson-Zakharevich.  In particular, we use this framework to define a lifting of the Gillet-Soul\'{e} motivic measure.
\end{abstract}

Conventions: Throughout this paper, $k$ will always refer to a perfect field, and $R$ will refer to an arbitrary commutative ring.  Furthermore, given a base scheme $S$, the category $\Var_S$ of varieties over $S$ will simply be the category of finite type separated schemes over $S$ (we do not require our varieties to be reduced).  Importantly, unless otherwise stated, our ambient $\infty$-cosmos \cite{RieVer} will be that of Kan complex-enriched categories, the so-called fibrant $S$-categories of To\"en-Vezzozi \cite{ToeVez2} .  This is because we will be working with commutative diagrams directly for much of the work, and it is easier to prove commutativity in a model with strict horizontal composition.  That said, we may invoke comparison between the K-theory of $S$-categories and that of Quasicategories developed in \cite{BGT} .

\section{Introduction}

This paper was born out of a desire to construct a lift of the Gillet-Soul\'e motivic measure.  While originally more limited in scope, its current iteration synthesizes a number of different recent developments in algebraic geometry and homotopy theory with an eye towards constructing meaningful derived motivic measures from six functors formalism.

One of the unusual features of this construction is that we are using something quite rare\textemdash namely, a six functors formalism (a highly structured object) to define a map of K-theory spectra (much less structure ultimately).  That said, neither object is particularly approachable in and of itself, and our hope is that each can be used to study the other.

We begin the paper with a brief exploration of the three main types of categories we will require for the constructions that follow.  These are Waldhausen categories, a now classical type of relative category used as a particularly general setting of K-theory; SW-categories, a more recently developed analogue of Waldhausen categories used to describe and categorify cutting and pasting data; and stable $\infty$-categories, a recent $\infty$-categorical enrichment of the notion of triangulated category which simplifies the definition considerably and makes many constructions (such as gluing and taking cones) functorial that are simply not on the level of triangulated categories.  Note that we will not discuss the formalisms of model categories or DG/spectral categories in this work because they are only briefly touched upon.  The motivated reader is referred to the phenomenal book by Hovey \cite{Hov} for model categories and the wonderful introduction by To\"en \cite{Toe} for DG categories (spectral categories are a focal point of the groundbreaking paper \cite{BGT}). 

We will then briefly look at how to extract K-theory from each of these types of categories.  The one novel aspect of this section will be to demonstrate how to meaningfully extend the definition of weakly W-exact functor \cite{Cam} from the original setting (in which one compares an SW-category to a Waldhausen category) to a comparison between SW-categories and pointed, finitely homotopy cocomplete $\infty$-categories (as before incarnated as fibrant $S$-categories):

\begin{thm}
Given a weakly W-exact functor $F:\cC\to\cA$, where $\cC$ is an SW-category and $\cA$ is a pointed finitely homotopy cocomplete $\infty$-category, it may be composed with the Yoneda embedding $\cA\to\cM(\cA)$ to obtain a weakly W-exact functor $\iota\circ F:\cC\to\cM(\cA)$ to the Waldhausen $\cM(\cA)$ which yields a map on K-theory $K(\cC)\to K(\cA)$.  This construction is functorial in exact functors both of SW-categories and of pointed finitely homotopy cocomplete $\infty$-categories.
\end{thm}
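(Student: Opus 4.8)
The plan is to reduce the claim to the classical machinery recalled above. One checks that composing a weakly W-exact functor $F\colon\cC\to\cA$ with the Yoneda embedding $\iota\colon\cA\to\cM(\cA)$ produces a weakly W-exact functor $\iota\circ F$ from the SW-category $\cC$ into the genuine Waldhausen category $\cM(\cA)$; one then applies the (recalled) theorem that a weakly W-exact functor into a Waldhausen category induces a map of K-theory spectra, obtaining $K(\cC)\to K(\cM(\cA))$; and one post-composes with the comparison equivalence $K(\cM(\cA))\xrightarrow{\sim}K(\cA)$ that accompanies the Waldhausen model $\cM(\cA)$ of $\cA$ (this step being vacuous if $K(\cA)$ is simply \emph{defined} to be $K(\cM(\cA))$, and otherwise being the approximation-type comparison established when $\cM(-)$ was introduced).

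The heart of the matter is the first step, transporting weak W-exactness along $\iota$. Unwinding the extended definition, $F$ weakly W-exact means that $F$ carries the distinguished zero object of $\cC$ to a zero object of $\cA$, carries the weak equivalences of $\cC$ to equivalences of $\cA$, and sends each subtraction sequence of $\cC$ to a cofiber sequence in $\cA$, i.e.\ to a homotopy pushout square one of whose corners is the zero object. Now $\iota$ is homotopically fully faithful, sends the zero object of $\cA$ to a zero object of $\cM(\cA)$, sends equivalences to weak equivalences, and---by construction of the model---carries the finite homotopy colimits appearing in the W-exactness axioms to the corresponding finite homotopy colimits in $\cM(\cA)$ (computed there via the weak equivalences rather than by strict colimits). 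Hence $\iota$ carries cofiber sequences to cofiber sequences; using the cylinder functor of $\cM(\cA)$ one then replaces a leg of such a sequence by a weakly equivalent \emph{strict} cofibration with the correct strict cofiber, which is exactly the latitude encoded by the word ``weakly''. The remaining compatibilities---with the $w$-parts of the SW- and Waldhausen structures and with the gluing and extension axioms---are routine, and one concludes that $\iota\circ F$ is weakly W-exact in the original sense.

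Granting this, the recalled theorem gives $K(\cC)\to K(\cM(\cA))$ and composition with $K(\cM(\cA))\simeq K(\cA)$ gives the asserted $K(\cC)\to K(\cA)$. I would treat functoriality in the two variables separately. Precomposing $F$ with an exact functor $\cC'\to\cC$ of SW-categories again yields a weakly W-exact functor, and the naturality of the K-theory construction for weakly W-exact functors, together with functoriality of the Waldhausen $S_\bullet$-construction, makes the resulting triangle of spectra commute. Postcomposing $F$ with an exact functor $G\colon\cA\to\cA'$ of pointed finitely homotopy cocomplete $\infty$-categories---``exact'' meaning zero-object- and finite-homotopy-colimit-preserving---again yields a weakly W-exact functor $\cC\to\cA'$; moreover $\cM(-)$ is functorial for exact functors and the Yoneda embeddings are natural, so $\cM(G)\circ\iota\simeq\iota'\circ G$, and applying $K$ to the resulting square of categories, together with the comparison equivalences, shows that $K(\cC)\to K(\cA)$ is compatible with $K(G)$. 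That these squares commute strictly rather than merely up to homotopy is exactly what the convention of working inside the $\infty$-cosmos of Kan-enriched categories, with its strict horizontal composition, is there to ensure.

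I expect the main obstacle to be the calibration underlying the second paragraph: formulating the extended notion of weak W-exactness so that it is genuinely compatible with the classical notion along $\iota$, and arranging the cylinder-factorizations in $\cM(\cA)$ to be natural enough that the resulting map $K(\cC)\to K(\cA)$ is independent of the choices involved and strictly functorial. Everything else is either imported from the earlier material---the construction $\cM(-)$, its exactness properties, and the comparison $K(\cM(-))\simeq K(-)$---or a formal consequence of the functoriality of $S_\bullet$.
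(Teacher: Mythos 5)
Your proposal is correct and takes essentially the same approach as the paper: factor the desired map through the Waldhausen model $\cM(\cA)$, observe that the Yoneda embedding $\iota$ preserves zero objects, equivalences, and finite homotopy colimits so that $\iota\circ F$ remains weakly W-exact into the Waldhausen category $\cM(\cA)$, invoke the Campbell--Wolfson--Zakharevich result to get $K(\cC)\to K(\cM(\cA))=K(\cA)$, and derive functoriality from functoriality of the constructions involved. The paper's proof is terser (it does not spell out the cylinder-factorization point, which in any case is largely absorbed by the ``weak'' in ``weakly W-exact''), but the decomposition and the key lemma are the same.
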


The interesting thing here is that in some sense, this is not even an extension of the definition, as given a particularly nice $\infty$-category $\cA$ (in a sense to be defined in a later section) that embeds into a simplicial model category, we may functorially assign it a Waldhausen category $\cM(\cA)$ such that $K(\cA)\simeq K(\cM(\cA))\simeq K(N^{hc}(\cA^{cf}))$, where the last term is the K-theory of the homotopy coherent nerve of the cofibrant-fibrant objects in $\cA$, which is the corresponding quasicategory under the homotopy coherent nerve functor.  It just so happens that in all cases we consider, the necessary niceness conditions will be met.

From here, we go on to describe the $\infty$-categorical six functors formalism outlined in Khan \cite{Khan}, alternately titled the theory of $(*, \#, \otimes)$-formalisms satisfying the Voevodsky criteria or the theory of motivic $\infty$-categories.  Upon introducing the basic setup, we go on to show that for a motivic $\infty$-category $\bD$ valued over $\cS$, where $\cS$ is some appropriate subcategory of schemes, given any excellent geometrically unibranch scheme $S\in\cS$ and a variety $f:X\to S$ over $S$, the assignment $X\mapsto f_*f^!(\mathbf1_S)$, where $\mathbf1_S$ is the tensor unit $\bD(S)$, upgrades to a weakly W-exact functor in the following way.

\begin{thm}
Suppose that $\bD$ satisfies one of the following two sets of conditions:
\begin{itemize}
    \item The four functors preserve constructible objects when given input a seperable morphism of finite type (note that compactness is trivially preserved by tensor)
    \item The six functors preserve constructible objects over Noetherian quasi-excellent schemes  of finite dimension with respect to morphisms of finite type (in other words, for any finite type morphism $f:X\to S$ with target Noetherian quasi-excellent of finite dimension, the four functors preserve constructible objects, while tensor and $\Hom$ preserve constructible objects over $S$)
\end{itemize}  
Then, given a scheme $S$ (assumed to be Noetherian quasi-excellent of finite dimension if $\bD$ satisfies the second condition in particular), there is a weakly W-exact functor \[M^c_{\bD(S)}:\Var_S\to\bD_{\cons}(S)\]
sending each variety (smooth or otherwise) $(X\overset{f}{\to}S)\in\Var_S$ to $M^c_{\bD(S)}(X):=f_*f^!\mathbf{1}_S$.
\end{thm}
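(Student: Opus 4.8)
The plan is to construct $M^c_{\bD(S)}$ by hand and verify directly that it is a weakly W-exact functor into the stable $\infty$-category $\bD_{\cons}(S)$; the extension of weak W-exactness to $\infty$-categorical targets established above then produces the induced map $K(\Var_S)\to K(\bD_{\cons}(S))$ and its functoriality. Recall that the SW-structure on $\Var_S$ has zero object $\emptyset$, admissible monomorphisms the closed immersions, weak equivalences the isomorphisms, and subtraction datum assigning to a closed immersion $i\colon Z\hookrightarrow X$ its open complement $j\colon U\hookrightarrow X$; and that $\bD_{\cons}(S)$ is a stable $\infty$-subcategory of $\bD(S)$ — closed under shifts and finite (co)limits by either bulleted hypothesis — hence pointed and finitely homotopy cocomplete, so a legitimate target.

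On objects we set $(f\colon X\to S)\mapsto f_*f^!\mathbf1_S$, and well-definedness is the assertion that this lies in $\bD_{\cons}(S)$: the unit $\mathbf1_S$ is constructible and $f$ is of finite type, so $f^!\mathbf1_S\in\bD_{\cons}(X)$ because $f^!$ preserves constructibility (for separated finite type morphisms under the first set of conditions; over a Noetherian quasi-excellent finite-dimensional base under the second), and then $f_*$ preserves constructibility, so $f_*f^!\mathbf1_S\in\bD_{\cons}(S)$. On the admissible monomorphisms we must give a functorial assignment of cofibrations: for a closed immersion $h\colon A\hookrightarrow B$ over $S$, writing $a=b\circ h$ for the structure maps, we have $M^c_{\bD(S)}(A)=b_*h_*h^!b^!\mathbf1_S$, and since $h$ is proper the canonical identification $h_*\simeq h_!$ composed with the counit $h_!h^!\Rightarrow\id$ yields a morphism $M^c_{\bD(S)}(A)\to b_*b^!\mathbf1_S=M^c_{\bD(S)}(B)$, which is an equivalence when $h$ is an isomorphism. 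Functoriality of this assignment, and the compatibility of $h_*\simeq h_!$ with composites of proper maps, is part of the $\infty$-functoriality of $(-)_!$ and $(-)^!$ built into Khan's formalism; and $\emptyset\mapsto 0$ because $\bD(\emptyset)\simeq 0$.

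The remaining, and essential, point is the subtraction axiom. For a closed immersion $i\colon Z\hookrightarrow X$ with open complement $j\colon U\hookrightarrow X$ the formalism supplies the localization cofiber sequence of endofunctors of $\bD(X)$,
\[
i_*i^!\ \longrightarrow\ \id\ \longrightarrow\ j_*j^*.
\]
Evaluating at $f^!\mathbf1_S$, applying $f_*$, and using $f_*i_*\simeq(fi)_*$, $i^!f^!\simeq(fi)^!$, $f_*j_*\simeq(fj)_*$ and $j^*f^!\simeq j^!f^!\simeq(fj)^!$ (the last because $j$ is étale), we obtain a cofiber sequence in $\bD_{\cons}(S)$
\[
M^c_{\bD(S)}(Z)\ \longrightarrow\ M^c_{\bD(S)}(X)\ \longrightarrow\ M^c_{\bD(S)}(U),
\]
each term constructible exactly as before. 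Since $\bD(S)$ is stable this is the same datum as a homotopy cocartesian square
\[
\begin{tikzcd}
M^c_{\bD(S)}(Z)\ar[r]\ar[d] & M^c_{\bD(S)}(X)\ar[d]\\
0\ar[r] & M^c_{\bD(S)}(U),
\end{tikzcd}
\]
which is precisely what weak W-exactness requires of the complement datum; the higher compatibilities in the definition — coherence of these squares under iterated stratifications and under composition of closed immersions — descend from the fact that the localization sequence is a coherent recollement in the $\infty$-categorical six functors formalism, hence is preserved by $f_*$ and by restriction to constructible objects. Functoriality of the whole construction in exact functors of SW-categories and of pointed finitely homotopy cocomplete $\infty$-categories is then formal.

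I expect the genuinely deep ingredient to be invoked rather than reproved: stability of $\bD_{\cons}$ under $f_*$ for finite type $f$ is the motivic counterpart of Gabber's finiteness theorem, and the two bulleted hypotheses are exactly the two regimes in the literature in which it is known. The real labour in the proof will therefore be bookkeeping — matching the homotopy-coherent data emitted by the $\infty$-categorical localization and the six-functor coherences with Campbell's combinatorial definition of a weakly W-exact functor, in particular checking that ``weakly'' (homotopy cocartesian) rather than ``strictly'' (an honest pushout) is both what the construction yields and what is needed, and that the coherence witnesses assemble correctly. I anticipate no conceptual obstruction here, precisely because the relevant coherences are already present in Khan's formalism, but this is where care must be taken.
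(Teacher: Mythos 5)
Your construction is the same one the paper uses — $X\mapsto f_*f^!\mathbf1_S$, cofibrations sent to maps induced by the shriek counit $\epsilon_i^!$ for the closed immersion $i$ (via $i_!\simeq i_*$), the subtraction axiom handled by evaluating the localization cofiber sequence $i_*i^!\to\id\to j_*j^*$ at $f^!\mathbf1_S$ and pushing forward along $f_*$, and constructibility secured by the stated hypotheses. The conceptual skeleton is correct. However, you have skipped over what the paper treats as the central technical difficulty: the fifth axiom in the definition of a weakly W-exact functor, which requires that for every cartesian square of $S$-varieties with horizontal closed immersions and vertical open immersions, the induced square of morphisms $\epsilon_i^!$ and $\eta_j^*$ in $\bD_{\cons}(S)$ actually commutes. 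You wave at this with ``the higher compatibilities \ldots descend from the fact that the localization sequence is a coherent recollement,'' but this is not the localization sequence's coherence at all — it is a separate statement about the interaction between the unit of the $(f^*\dashv f_*)$ adjunction for open immersions and the counit of the $(f_!\dashv f^!)$ adjunction for closed immersions across a base-change square, and it is not self-evidently formal.

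The paper's proof spends a chain of lemmas on exactly this point: it shows that an adjunction of cospans of $\infty$-categories induces an adjunction on pullbacks and on comma $\infty$-categories, uses this to prove that two triangles of (co)units commute (its Proposition 3.29), then runs a commutative-cube argument (Lemmas 3.30--3.31, Proposition 3.32) to upgrade commutativity in the homotopy category to coherent commutativity of the full square, arriving at Corollary 3.33, which is precisely the fifth axiom after evaluation at $\mathbf1_S$. You would need to supply this or an equivalent argument. Secondarily, you never explicitly construct the contravariant complement functor $F^!\colon\comp(\Var_S)^{op}\to\bD_{\cons}(S)$ sending an open immersion $j$ to the map induced by $\eta_j^*$, nor verify its functoriality under composition of open immersions (the paper's Corollary 3.26, ``right triangle''); in your writeup the role of $j^*$ appears only inside the subtraction cofiber sequence, which does not by itself yield a functor on $\comp(\Var_S)^{op}$.
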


This theorem forms the core of the paper, as it allows us to conclude the existence of a derived motivic measure:

\begin{cor}
Suppose $\bD$ sastisfies one of the two conditions of the above theorem.  Then, given a scheme $S$ (assumed to be Noetherian quasi-excellent of finite dimension if $\bD$ satisfies the second set of conditions), one obtains a map of K-theory spectra 
\[K(M^c_{\bD(S)}):K(\Var_S)\to K(\bD_{\cons}(S)).\]
\end{cor}

From here, we apply our construction to the motivic $\infty$-category of Beilinson motives $\DM_B$, and show that the construction restricts to a spectral lift of the Gillet-Soul\'e motivic measure, thus demonstrating our stated intent.

\begin{thm}
Considering a perfect base field $k$ and rational coefficients, the map 
\[
K(M^c_k):K(\Var_k)\to K(\DM_B^c(k))
\]
yields a derived lift of the Gillet-Soul\'e motivic measure.
\end{thm}

This paper concludes with a brief discussion of another approach to lifting the Gillet-Soul\'e motivic measure (which is almost surely equivalent), before mentioning some future directions this work might take.

\section{Waldhausen Categories, SW-Categories, Stable $\infty$-Categories and the K-Theory of Varieties}

\subsection{Relevant Categorical Definitions and Examples}

In this section, we will outline the different notions of $1$ or $\infty$-categories that are needed in this paper, as well as elucidate how to extract K-theory from each of them.

\begin{defn}
A \emph{Waldhausen category} $(\cC, \cof, \cW)$ consists of the data of a category $\cC$ with two distinguished classes of morphisms: \emph{cofibrations} $\cof$ (elements of which are denoted $\hookrightarrow$) and \emph{weak equivalences} $\cW$ (elements of which are denoted $\overset{\sim}{\to}$) which are required to satisfy the following axioms:
\begin{enumerate}
    \item All isomorphisms are cofibrations
    \item $\cC$ has a zero object, and for any zero object $0\in\cC$ and any $X\in\cC$, the map $0\to X$ is a cofibration
    \item Cofibrations are stable under pushout, so given any cofibration $X\hookrightarrow Y$ and any morphism $X\to Y$, the map $Y\to Y\cup_XZ$ is a cofibration
    \item All isomorphisms are weak equivalences
    \item Weak equivalences are closed under composition and hence form a subcategory
    \item Given a commutative diagram of the form
    \[
    \begin{tikzcd}
     Z \arrow[d, "\sim"] & X \arrow[d, "\sim"] \arrow[l] \arrow[r, hook] & Y \arrow[d, "\sim"]\\
     Z' & X' \arrow[l] \arrow[r, hook] & Y'
    \end{tikzcd}
    \]
    where the vertical arrows are weak equivalences and the horizontal arrows of the right square are cofibrations, one has that the induced map \[Y\cup_XZ\overset{\sim}{\to} Y'\cup_{X'}Z'\]
    is a weak equivalence as well.
\end{enumerate}
\end{defn}

There is also a natural notion of functor between Waldhausen categories.

\begin{defn}
A functor $F:\cC\to\cC'$ between two Waldhausen categories is \emph{exact} if it preserves cofibrations, weak equivalences, and finite (homotopy) colimits.  The category of Waldhausen categories and exact functors will be denoted $\text{Wald}$.
\end{defn}

While Waldhausen categories provided one of the earliest settings for the algebraic K-theory of categories (preceded only by exact categories), in recent years, several other (often related) frameworks have been used.  One of the most important (for this and many other reasons) is that of stable quasicategories.

\begin{defn}
Suppose that one has a pointed quasicategory $\cC$.  Given any morphism $X\to Y$ in $\cC$, its \emph{kernel} and \emph{cokernel} are defined, if they exist, by the homotopy cartesian and cocartesian squares
\[
\begin{tikzcd}
 \Ker{f} \arrow[d] \arrow[r] & X\arrow[d, "f"]\\
 0 \arrow[r] & Y
\end{tikzcd}
\text{ and }
\begin{tikzcd}
 X \arrow[d] \arrow[r, "f"] & Y\arrow[d]\\
 0 \arrow[r] & \Coker{f}
\end{tikzcd}
\]
respectively.

In general, an arbitrary commutative square 
\[
\begin{tikzcd}
 X \arrow[d] \arrow[r, "f"] & Y\arrow[d, "g"]\\
 0 \arrow[r] & Z
\end{tikzcd}
\]
is called a \emph{triangle}.  If it is cartesian, it is called an \emph{exact triangle} and if it is cocartesian, it is called a \emph{coexact triangle}.
\end{defn}
\begin{defn}
A quasicategory $\cC$ is \emph{stable} if it satisfies the following conditions:
\begin{itemize}
    \item $\cC$ is pointed (i.e. has a zero object)
    \item Every morphism in $\cC$ has a kernel and a cokernel
    \item Every exact triangle is coexact and every coexact triangle is exact.
\end{itemize}
An \emph{exact functor} $F:\cA\to\cB$ between stable quasicategories is one which preserves finite colimits.
\end{defn}
This extremely simple definition belies its incredible depth.  In particular, the homotopy category of a stable quasicategory is a triangulated category, and essentially every important example of triangulated categories arises as such a homotopy category.  We will not describe the theory of stable quasicategories here (a comprehensive guide is that of \cite{Lur2}).

Before continuing, we note the following proposition/definition

\begin{prop}
There is a Quillen equivalence between the model category of simplicial sets equipped with the Joyal model structure and the model category of simplicial categories equipped with the Bergner model structure:
\[
(\mathfrak C\dashv N^{hc}):\textbf{sSet}_{\text{Joyal}}\rightleftarrows\textbf{sSetCat}_{\text{Bergner}}.
\]
The right adjoint is known as the \emph{homotopy coherent nerve} and takes Kan-enriched categories to quasicategories.  Furthermore, it fits into a strictly commutative triangle
\[
\begin{tikzcd}
 \Cat\arrow[rr, "N"]\arrow[dr, hook] & & \textbf{sSet}\\
  & \textbf{sSetCat}\arrow[ur, "N^{hc}"]
 \end{tikzcd}.
\]
In other words, the classical nerve of a category may be factored into the inclusion of categories into simplicial categories as simplicial categories with discrete $\Hom$ simplicial sets followed by the homotopy coherent nerve.
\end{prop}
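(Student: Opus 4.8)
The plan is to present this as a recollection of a theorem of Lurie (building on Joyal, Cordier--Porter, and Bergner), with the one piece requiring genuine verification being the strict commutativity of the triangle. First I would recall the two functors. The left adjoint $\mathfrak C$ is the (essentially unique) colimit-preserving extension of the cosimplicial object $[n]\mapsto\mathfrak C[\Delta^n]$, where $\mathfrak C[\Delta^n]$ has object set $\{0,\dots,n\}$ and mapping space from $i$ to $j$ the nerve of the poset of subsets $S\subseteq\{i,i+1,\dots,j\}$ with $i,j\in S$ (empty for $i>j$), composition being union of subsets; the homotopy coherent nerve $N^{hc}$ is the right adjoint, characterized by $N^{hc}(\cD)_n=\Hom_{\textbf{sSetCat}}(\mathfrak C[\Delta^n],\cD)$.

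For the Quillen adjunction I would invoke that $\mathfrak C$ carries monomorphisms to Bergner cofibrations and carries the inner horn inclusions $\Lambda^n_i\hookrightarrow\Delta^n$ for $0<i<n$, together with the remaining Joyal generating trivial cofibrations, to Bergner trivial cofibrations; equivalently, $N^{hc}$ preserves fibrations and trivial fibrations. That $N^{hc}$ lands in quasicategories when applied to a Kan-enriched category is then immediate: an inner horn $\Lambda^n_i\to N^{hc}(\cD)$ transposes to a map $\mathfrak C[\Lambda^n_i]\to\cD$, which extends over $\mathfrak C[\Delta^n]$ because the inclusion is a Bergner trivial cofibration and $\cD$ is fibrant. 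To upgrade the Quillen adjunction to a Quillen equivalence, one uses that every simplicial set is Joyal-cofibrant and checks that for fibrant $\cD$ the counit $\mathfrak C[N^{hc}\cD]\to\cD$ is a Dwyer--Kan equivalence; the unit condition then follows formally. This is precisely Theorem 2.2.5.1 of \cite{Lur2}, and I would cite it rather than reproduce its proof.

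It remains to verify $N^{hc}\circ\iota=N$ on the nose, where $\iota\colon\Cat\hookrightarrow\textbf{sSetCat}$ sends $\cC$ to the simplicial category with the same objects and constant mapping spaces $\Hom_\cC(x,y)$. Since a map of simplicial sets into a constant simplicial set factors uniquely through $\pi_0$, a simplicial functor $\mathfrak C[\Delta^n]\to\iota(\cC)$ is the same datum as an ordinary functor from the homotopy category $\mathrm{ho}\,\mathfrak C[\Delta^n]$ to $\cC$; and the combinatorics of the mapping spaces of $\mathfrak C[\Delta^n]$ give $\mathrm{ho}\,\mathfrak C[\Delta^n]=[n]$ exactly, since the poset of subsets above has a greatest element when $i\le j$ (so it has connected, indeed contractible, nerve) and is empty otherwise. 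Hence $N^{hc}(\iota\cC)_n=\Hom_{\textbf{sSetCat}}(\mathfrak C[\Delta^n],\iota\cC)=\Hom_{\Cat}([n],\cC)=N(\cC)_n$, and each identification is an equality of sets, natural in $[n]\in\Delta$, so $N^{hc}\circ\iota=N$ strictly. The only real obstacle anywhere in the argument is the Dwyer--Kan equivalence of the counit in Lurie's theorem, which is the combinatorial core of the Quillen equivalence; the commuting triangle, by contrast, reduces to the elementary identity $\mathrm{ho}\,\mathfrak C[\Delta^n]=[n]$.
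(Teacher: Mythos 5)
Your argument is correct, and the one place where you do real work---the strict identity $N^{hc}\circ\iota=N$---is carried out soundly: the observation that maps of simplicial sets into a constant simplicial set factor uniquely through $\pi_0$, combined with the fact that the poset of subsets $S\subseteq\{i,\dots,j\}$ containing both endpoints has a greatest (indeed also a least) element and hence a contractible nerve, gives $\mathrm{ho}\,\mathfrak C[\Delta^n]=[n]$, whence $N^{hc}(\iota\cC)_n=\Hom_{\Cat}([n],\cC)=N(\cC)_n$ on the nose and naturally in $[n]$. The paper itself disposes of the whole proposition with a single citation to Bergner's book (section 7.4), so it does not spell out the triangle; you, by contrast, cite Lurie for the Quillen equivalence (the genuinely hard part, which you correctly flag as the Dwyer--Kan equivalence of the counit on fibrant objects) and then verify the triangle by hand. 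The two approaches are effectively equivalent on the Quillen-equivalence portion---both defer to the literature---and yours is strictly more explicit on the triangle, which is arguably the claim the paper actually uses downstream (to factor the classical nerve through $\textbf{sSetCat}$). Nothing is missing; the only stylistic difference is that the paper treats the entire statement as a black box while you open the box on the easy half.
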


\begin{proof}
See \cite{Ber} section 7.4.
\end{proof}

This allows us a direct comparison between our chosen model of $\infty$-categories and the one which is most often used.

\begin{defn}
A stable $\infty$-category (incarnated as a fibrant $S$-category) will be one which maps under the homotopy coherent nerve to a stable quasicategory.  It should be noted that these will be precisely the $S$-categories for which the above three axioms hold, but the word homotopy is inserted in front of the word (co)cartesian in the definition of (co)exact triangles.

An \emph{exact functor} $F:\cA\to\cB$ between two stable $\infty$-categories will be one which preserves finite homotopy colimits.
\end{defn}

The last class of categories we will look at are so-called SW-categories and their precursors.  These categories were introduced as a way of categorifying the notion of subtraction present in settings such as decomposing varieties into open and closed complementary subsets and decomposition of polytopes.  In particular, they allow us a method of lifting universal Euler characteristics to the level of spectra, and will be extremely important later in the paper.  To define SW-categories, we must first define a few prerequisite categorical structures along the way.  It should also be noted that there are other categorical frameworks that deal with the concept of subtraction and scissors congruence \textemdash most notably the notion of an assembler category first introduced by Inna Zakharevich in \cite{Zakh} .  Assembler categories are more natural to define, but are slightly less easy to map out of (into targets such as Waldhausen categories or stable $\infty$-categories, for example).  For this latter reason, our preferred setting is that of SW-categories.  It should also be noted that for our purposes, subtractive categories are really enough, as our "weak equivalences" will merely be isomorphisms.

\begin{defn}
A \emph{pre-subtractive category} $\cC$ is a category equipped with two wide subcategories $\cof(\cC)$ and $\comp(\cC)$ referred to as \emph{cofibrations} and \emph{complements} (morphisms in $\cof(\cC)$ are denoted by $\hookrightarrow$ and morphisms in $\comp(\cC)$ are denoted by $\overset{\circ}{\to}$) and equipped with a subclass $\sub(\cC)$ of diagrams of the form $Z\hookrightarrow X\overset{\circ}{\leftarrow}Y$ referred to as \emph{subtraction sequences}.  These are all required to satisfy the following axioms:
\begin{itemize}
    \item $\cC$ has an initial object (often referred to via $\emptyset$ in practice)
    \item $\cC$ has finite coproducts, and for every $X, Y\in\cC$, one has that $X\to X\coprod Y$ is both a cofibration and a complement
    \item Pullbacks along cofibrations and complements exist and are cofibrations and complements respectively
    \item For every $X, Y\in\cC$, one has $X\hookrightarrow X\coprod Y\overset{\circ}{\leftarrow}Y\in\sub(\cC)$
    \item Every cofibration $X\hookrightarrow Y$ participates in a subtraction sequence $Z\hookrightarrow X\overset{\circ}{\leftarrow}Y$ which is unique up to unique isomorphism.  We denote this unique $Y$ by $X-Z$.  The same statement is true in reverse for every complement
    \item Any cartesian square of cofibrations
    \[
    \begin{tikzcd}
     Z \arrow[d, hook] \arrow[r, hook] & X \arrow[d, hook]\\
     Y \arrow[r, hook] & W
    \end{tikzcd}
    \]
    can be completed into a diagram of the form
    \[
    \begin{tikzcd}
      Z \arrow[d, hook] \arrow[r, hook] & X \arrow[d, hook] & X-Z \arrow[l, "\circ"] \arrow[d, hook]\\
     Y \arrow[r, hook] & W & W-Y \arrow[l, "\circ"]\\
     Y-Z \arrow[u, "\circ"] \arrow[r, hook] & W-X \arrow[u, "\circ"] & S \arrow[l, "\circ"] \arrow[u, "\circ"]
    \end{tikzcd}
    \]
    where $S:=(W-X)-(Y-Z)\cong(W-Y)-(X-Z)$ in such a way that every row and column will be a subtraction sequence, the bottom row and rightmost column are uniquely determined once choices of the complement are made, and the bottom-right square will also be cartesian.  The dual statement holds for cartesian squares of complements
    \item Subtraction is stable under pullback, or in other words, given any subtraction sequence $Z\hookrightarrow X\overset{\circ}{\leftarrow}Y$ and any morphism $W\to X$ in $\cC$, one has that $Z\times_XW\hookrightarrow W\overset{\circ}{\leftarrow}Y\times_XW$ is also a subtraction sequence
\end{itemize}
\end{defn}

We will often merely refer to a pre-subtractive category as $\cC$ instead of specifying all of the attendant data.  The most important examples of pre-subtractive categories for us are actually subtractive categories, as discussed immediately below.

\begin{defn}
A \emph{subtractive category} is a pre-subtractive category $\cC$ which additionally satisfies the following axioms:
\begin{itemize}
    \item The pushout of a diagram in which both legs are cofibrations exists and satisfies base-change (the created maps are also cofibrations).  Furthermore, cocartesian diagrams of this form are cartesian as well
    \item Given a cartesian diagram of cofibrations
    \[
    \begin{tikzcd}
     Z \arrow[d, hook] \arrow[r, hook] & X \arrow[d, hook]\\
     Y \arrow[r, hook] & W
    \end{tikzcd},
    \]
    the natural map $X\coprod_ZY\hookrightarrow W$ must also be a cofibration
    \item Given a diagram of the form
     \[
    \begin{tikzcd}
      X' \arrow[d, hook] & W' \arrow[d, hook] \arrow[l, hook] \arrow[r, hook] & Y' \arrow[d, hook]\\
     X & W \arrow[l, hook] \arrow[r, hook] & Y \\
     X'' \arrow[u, "\circ"] & W'' \arrow[u, "\circ"] \arrow[l, hook] \arrow[r, hook] & Y'' \arrow[u, "\circ"]
    \end{tikzcd}
    \]
    in which all the columns are subtraction sequences and all of the horizontal morphisms are cofibrations, the sequence $X'\coprod_{W'}Y'\hookrightarrow W\coprod_WY\overset{\circ}{\leftarrow}X''\coprod_{W''}Y''$ is a subtraction sequence
\end{itemize}
\end{defn}

In particular, it is proven in \cite{Cam} proposition 3.28 that given a base scheme $S$, the categories $\Var_S$ and $\Sch_S$ of $S$-varieties and $S$-schemes, respectively, have the structure of subtractive categories with cofibrations being closed immersions, complements being open immersions, and subtractions given by decomposition of varieties into complementary closed and open subvarieties.

\begin{defn}
An \emph{exact functor} $F:\cC\to\cC'$ between subtractive categories is a functor which satisfies the following properties:
\begin{itemize}
    \item $F$ preserves zero objects
    \item $F$ preserves subtraction sequences
    \item $F$ preserves cocartesian squares
\end{itemize}
The category of subtractive categories and exact functors is denoted $\text{SubCat}$.
\end{defn}

As one can imagine, the inclusion functor $\Var_S\hookrightarrow\Sch_S$ is in fact an exact functor of subtractve categories.

Now, we have all that is needed for the current paper, but if we happen to want a notion of weak equivalence that "plays nicely" with our other categories, we may enlarge our definition somewhat.

\begin{defn}
An \emph{SW-category} is a subtractive category $\cC$ equipped with an additional distinguished wide subcategory $w\cC$ of \emph{weak equivalences} (arrows in which are denoted $\overset{\sim}{\to}$) subject to the following conditions:
\begin{itemize}
    \item $w\cC$ contains all isomorphisms
    \item Given a commutative diagram of the form
     \[
    \begin{tikzcd}
      X' \arrow[d, "\sim"] & W' \arrow[d, "\sim"] \arrow[l, hook] \arrow[r, hook] & Y' \arrow[d, "\sim"]\\
     X & W \arrow[l, hook] \arrow[r, hook] & Y
    \end{tikzcd}
    \]
    in which horizontal arrows are cofibrations and vertical arrows are weak equivalences, one has that the resulting map $X'\coprod_{W'}Y'\overset{\sim}{\to}X\coprod_WY$ is a weak equivalence
    \item Weak equivalences respect subtraction, or in other words, given any commutative square
    \[
    \begin{tikzcd}
     Z \arrow[r, hook] \arrow[d, "\sim"] & X \arrow[d, "\sim"]\\
     W \arrow[r, hook] & Y
    \end{tikzcd}
    \]
    we may complete it into a commutative diagram 
    \[
    \begin{tikzcd}
     Z \arrow[r, hook] \arrow[d, "\sim"] & X \arrow[d, "\sim"] & X-Z \arrow[l, "\circ"] \arrow[d, "\sim"]\\
     W \arrow[r, hook] & Y & Y-W \arrow[l, "\circ"]
    \end{tikzcd}.
    \]
\end{itemize}
\end{defn}

\begin{defn}
A functor $F:\cC\to\cC'$ of SW-categories is \emph{exact} if it is exact as a functor of subtractive categories and preserves weak equivalences.  The category of SW-categories and weak equivalences will be denoted $\text{SW-Cat}$.
\end{defn}

If it seems thus far that we have been specifying a good deal more data than in the Waldhausen case, you are correct.  In truth, cutting and pasting/subtraction data does not play particularly nicely with homotopy, and several strong axioms are needed to ensure that we can say anything at all.  Luckily, in spite of the strength of the axioms introduced, the most important examples, namely $\Var_S$ and $\Sch_S$, satisfy these strong axioms, allowing us to define K-theory, among other things.

\subsection{Maps from Waldhausen and SW $1$-categories into stable $\infty$-categories}
In the current subsection, we will only develop as much of the comparison theory as is needed for this paper.  All of this can be developed more generally, as has already been done in the case of Waldhausen $\infty$-categories by Clark Barwick.  That said, it ought to be possible to replace the $1$-categorical theory of SW-categories with an $\infty$-categorical analogue, although we do not do this here.  Indeed, this generality would be unnecessary for our current setting, as the sub-$\infty$-category of derived prestacks generated by algebraic varieties is discrete (or in other words, forms a $1$-category).  All our $1$-categories will be implicitly included into $\infty$-categories (in other words, we are making use of the inclusion of $1$-categories into simplicial categories as the simplicial categories with discrete $\Hom$ simplicial sets).  Furthermore, for this subsection in particular, we will make the assumption that all of our $\infty$-categories are small as a precaution to prevent swindles and ensure everything is well defined.

Before we begin discussing maps between the different types of $\infty$-categories we will be using, let us say a little bit about how to extract K-theory from each one.

\begin{defn}
Define $\text{Ar}[n]$ to be the full subcategory of $[n]\times[n]$ consisting of $(i, j)\in[n]\times[n]$ with $i\le j$ and $\tilde{\text{Ar}}[n]$ to be the full subcategory of $[n]^{op}\times[n]$ consisting of $(i, j)\in[n]^{op}\times[n]$ with $i\le j$.  The former is used in the construction of Waldhausen's K-theory and the K-theory of pointed $\infty$-categories (specifically, stable $\infty$-categories), while the latter is used in the construction of K-theory of SW-categories.
\end{defn}

Waldhausen categories were of course first defined as a general setting for algebraic K-theory.  Waldhausen's $S_\bullet$-construction is quite well-known at this point, so we will omit the basic variation here (see, for example, \cite{Wei} or \cite{Wald}).  That said, we will make relatively heavy use of a variation known as the $S'_\bullet$-construction which is defined for Waldhausen categories which admit a functorial factorization of any morphism into a cofibration followed by a weak equivalence.  We recall briefly that, in such a Waldhausen category, a homotopy cocartesian square is weakly equivalent via a zigzag of equivalences to a pushout square with one leg a cofibration and a weak cofibration is a map equivalent to a cofibration by a zigzag of weak equivalences.

\begin{defn}
We begin by defining $S'_n\cC$ to be the full subcategory of $\text{Fun}(\text{Ar}[n], \cC)$ spanned by functors $F:\text{Ar}[n]\to\cC$ such that 
\begin{itemize}
    \item $F(i, i)$ is a zero object for all $i$ between $0$ and $n$
    \item $F(i, j)\to F(i, k)$ is a weak cofibration for any $i\le j\le k$
    \item Whenever $i<j<k$, one has that the diagram
    \[
    \begin{tikzcd}
     F(i, j)\arrow[d]\arrow[r] & F(i, k)\arrow[d]\\
     F(j, j)\arrow[r] & F(j, k)
    \end{tikzcd}
    \]
    is homotopy cocartesian
\end{itemize}
These categories bundle together to form a simplicial object in categories $S'_\bullet\cC$.  This construction is functorial in exact functors.
\end{defn}

Each $S'_n\cC$ may naturally be given the structure of a Waldhausen category itself, so we may define a multisimplicial category $(S'_\bullet)^n\cC$ by iterating the construction at every simplicial level $n$ times.  We may then define the K-theory spectrum $K(\cC)$ of this Waldhausen category with $n$-th space $|w(S'_\bullet)^n\cC|$.  We do not use a different notation for this K-theory space, as it coincides with the standard Waldhausen K-theory in the case we described before.

We will also define K-theory for general pointed finitely cocomplete quasicategories. Given any pointed finitely cocomplete quasicategory $\cA$, one can define K-theory in the following way.

\begin{defn}
We begin by defining $\text{Gap}([n],\cA)$ to be the full subcategory of $\text{Fun}(\text{Ar}[n], \cA)$ spanned by functors $F:\text{Ar}[n]\to\cA$ such that 
\begin{itemize}
    \item $F(i, i)$ is a zero object for all $i$ between $0$ and $n$
    \item Whenever $i<j<k$, one has that the diagram
    \[
    \begin{tikzcd}
     F(i, j)\arrow[d]\arrow[r] & F(i, k)\arrow[d]\\
     F(j, j)\arrow[r] & F(j, k)
    \end{tikzcd}
    \]
    is cocartesian
\end{itemize}
\end{defn}

Note that $\text{Gap}([n], \cA)$ is always pointed and finitely cocomplete via \cite{BGT}, and when $\cA$ is stable, it is as well.

\begin{defn}
Given $\cA$ pointed and finitely cocomplete (resp. stable), we define the simplicial pointed finitely cocomplete (resp. stable) quasicategory $S^\infty_\bullet\cA$ via  $S^\infty_n\cA:=\text{Gap}([n], \cA)$ on objects, with face maps given by the composition of the arrows into and out of the objects on the $i$th row and column and degeneracy maps given by inserting a copy of the identity in position $i$ on rows and columns.  In particular, denoting by $\iota$ the largest internal $\infty$-groupoid functor, this also yields a simplicial $\infty$-groupoid $\iota S^\infty_\bullet\cA$.
\end{defn}

Note that we can actually iterate the construction above to give a multisimplicial pointed finitely cocomplete (resp. stable) $\infty$-category $(S^\infty_\bullet)^n\cA$ and corresponding multisimplicial $\infty$-groupoid $\iota(S^\infty_\bullet)^n\cA$.

\begin{defn}
Given a pointed finitely cocomplete (resp. stable) $\infty$-category $\cA$, we define its \emph{K-theory spectrum} $K(\cA)$ as (the fibrant-cofibrant replacement of what) follows.  For every $n$, the $n$th space in the spectrum is $|\iota(S^\infty_\bullet)^n\cA|$. Noting that $\text{Gap}([0], \cA)$ is contractible and that $\text{Gap}([1], \cA)$ is naturally equivalent to $\cA$, we obtain a natural map $S^1\wedge|\iota\cA|\to|S^\infty_\bullet\cA|$, which induces all of our connecting maps.

This construction is natural in exact functors. 
\end{defn}

When restricted to small stable $\infty$-categories equipped with the Lurie tensor product, the above construction defines a symmetric monoidal functor to the category of spectra.  In particular, this implies given a symmetric monoidal small $\infty$-category $\cA$, that the K-theory spectrum $K(\cA)$ is a $\bE_\infty$-ring spectrum.

We mostly introduce the notion of K-theory of quasicategories as a "standard" K-theory of $\infty$-categories to compare against, simply because this K-theory has had many nice properties proved about it already.  Before this, we introduce the K-theory of SW-categories and describe how to compare it to the K-theory of $\infty$-categories (incarnated as $S$-categories), which we also introduce.

\begin{defn}
Given any pointed finitely cocomplete $\infty$-category $\cA$ (considered as a Kan-enriched category), one may consider its category of pointed simplicial presheaves $\cP(\cA)$ equipped with the projective model structure.  From here, one can left Bousfeld localize this category to form the simplicial model category $\cP_{ex}(\cA)$ whose local objects are those functors $\cA^{op}$ to simplicial sets which commute with finite (homotopy) colimits.  Finally, we can construct a Waldhausen subcategory $\cM(\cA)$ of $\cP_{ex}(\cA)$ which consists of those objects which are cofibrant and weakly equivalent to representable presheaves.  From here, we may define \emph{the K-theory spectrum of $\cA$} to be the the Waldhausen K-theory $K(\cA):=K(\cM(\cA))$.
\end{defn}

It is this definition of K-theory of $\infty$-categories that we will compare with the K-theory of SW-categories.  Note that if one has a morphism $\cA\to\cB$ of pointed, finitely homotopy cocomplete $\infty$-categories which commutes with the relevant structures (a so-called weakly exact functor), one obtains an exact morphism $\cM(\cA)\to\cM(\cB)$ of Waldhausen categories by restricting the left Kan extension of our original morphism.  In this way, we obtain a functor from pointed finitely homotopy cocomplete $\infty$-categories and weakly exact functors to (strongly saturated) Waldhausen categories and exact functors.  This, in turn, may be composed with K-theory, which shows that the above definition of K-theory is functorial (\cite{Cis} page 544). 

This will allow us an easier way of defining SW-exact functors to $\infty$-categories without actually needing to enlarge our definition all that much.

Before we continue, let us begin by comparing the above two constructions on nice $\infty$-categories to ensure they coincide.  

\begin{thm}
Given a simplicial model category $\cC$ and a small full subcategory $\cA\subseteq\cC$ whose objects are all cofibrant, which admits all homotopy colimits, and whose underlying category inherits the structure of a Waldhausen category from the model structure on $\cC$, one has equivalences
\[
K(\cA)\simeq K(\cM(\cA))\simeq K(N^{hc}(\cA^{cf})),
\]
where $\cA^{cf}$ is the full subcategory of $\cA$ generated by objects which are both cofibrant and fibrant in $\cC$.
\end{thm}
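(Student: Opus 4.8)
The plan is to identify all three spectra with the $K$-theory of one and the same $\infty$-category, namely the Dwyer--Kan localization of $\cA$ at its weak equivalences, which I claim is (a model for) $N^{hc}(\cA^{cf})$; the two asserted equivalences then become two instances of a single comparison principle. The first move is to observe that for any Waldhausen category $\cB$ equipped with a functorial factorization of morphisms into a cofibration followed by a weak equivalence, every morphism is already a weak cofibration, so in $S'_n\cB$ the weak-cofibration clause is vacuous and $S'_n\cB$ is cut out of $\mathrm{Fun}(\mathrm{Ar}[n],\cB)$ purely by the zero-diagonal and homotopy-cocartesian-square conditions. These conditions are invariant under Dwyer--Kan equivalence and coincide on the nose with the defining conditions of $\mathrm{Gap}([n],-)$; moreover $\cM(\cA)$, being a full Waldhausen subcategory of the combinatorial simplicial model category $\cP_{ex}(\cA)$ that is closed under such factorizations (a cofibration out of a cofibrant, representable-equivalent object is again cofibrant and representable-equivalent), does carry them, and, after harmlessly enlarging $\cA$ within its Dwyer--Kan equivalence class to contain the relevant intermediate objects, so does $\cA$ — this is where the hypothesis that $\cA$ admits all homotopy colimits is used. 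Thus $K(\cA)$, $K(\cM(\cA))$ and $K(N^{hc}(\cA^{cf}))$ are all produced by applying the same construction ($S'_\bullet$ on the Waldhausen side, $S^\infty_\bullet$ on the $\infty$-categorical side) to equivalent inputs, and it remains only to exhibit the relevant Dwyer--Kan equivalences on underlying $\infty$-categories.

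The second move identifies these underlying $\infty$-categories. For $\cA$: by the standard theory of simplicial localization of (subcategories of) simplicial model categories, on fibrant--cofibrant objects the weak equivalences are simplicial homotopy equivalences and the hammock localization is modeled by the homotopy coherent nerve, while fibrant replacement inside $\cC$ exhibits $\cA^{cf}\hookrightarrow\cA$ as a Dwyer--Kan equivalence; hence the localization of $\cA$ is $N^{hc}(\cA^{cf})$. For $\cM(\cA)$: its Dwyer--Kan localization is the full $\infty$-subcategory, spanned by the representables, of the $\infty$-category presented by $\cP_{ex}(\cA)$; since the ``$ex$'' Bousfield localization is precisely engineered so that the Yoneda embedding $\cA\hookrightarrow\cP_{ex}(\cA)$ becomes homotopically fully faithful and finite-homotopy-colimit preserving, that full subcategory is canonically equivalent to $\cA\simeq N^{hc}(\cA^{cf})$. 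Combining the two moves, and invoking the Dwyer--Kan comparison between the classifying space of weak equivalences and the underlying space of a localization in order to pass between the $w$-flavored realizations of Waldhausen $K$-theory and the $\iota$-flavored realizations of $\infty$-categorical $K$-theory, yields the asserted chain of equivalences; compatibility with the iterations $(S'_\bullet)^n$ versus $(S^\infty_\bullet)^n$ and with geometric realization is formal and runs as in \cite{Cam} and \cite{BGT}.

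The step I expect to be the real obstacle is the first one: verifying rigorously that the $1$-categorical constructions producing $S'_\bullet\cM(\cA)$ (and the enlarged $S'_\bullet\cA$) genuinely stay inside the prescribed subcategories, and that forming the Dwyer--Kan localization commutes both with the $S'_\bullet$-construction and with geometric realization in the multisimplicial setting. Concretely this amounts to checking the hypotheses of Waldhausen's approximation theorem — or the corresponding statement for $\infty$-categorical $K$-theory in \cite{BGT} — for the Yoneda functor $\cA\to\cM(\cA)$, the delicate point being the factorization/approximation axiom, whose verification is exactly what forces one to enlarge $\cA$ and to use that it admits all homotopy colimits. By contrast, the identification of the underlying $\infty$-category of $\cM(\cA)$ via the ``$ex$''-localization, and the Dwyer--Kan comparison of classifying spaces, are standard once the presheaf-theoretic input is in place.
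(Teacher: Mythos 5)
The paper's own proof of this theorem is a one-line citation to the comparison machinery of \cite{BGT} (Theorems 7.8, 7.11 and Corollary 7.12). Your argument is a sketch of what that machinery is doing under the hood, and the broad strokes — identify all three $K$-theories with the $K$-theory of a single localized $\infty$-category, then argue that $S'_\bullet$ on the Waldhausen side and $S^\infty_\bullet$ on the $\infty$-categorical side compute the same thing on Dwyer--Kan equivalent inputs — are genuinely the right outline of the comparison. But as written, the proof contains gaps that are not filled, and you correctly flag the main one yourself.

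The most serious issue is the ``harmlessly enlarging $\cA$ within its Dwyer--Kan equivalence class'' step. For the Waldhausen $K$-theory $K(\cA)$ it is not automatic that a Dwyer--Kan equivalence of Waldhausen categories induces an equivalence of $K$-theory spectra — this is precisely the content of the theorems being cited, so invoking it here to replace $\cA$ by a category that has functorial factorizations is circular unless you independently verify an approximation-theorem-type hypothesis for the inclusion $\cA \hookrightarrow \cA'$. The hypothesis that $\cA$ admits all homotopy colimits gives you homotopy pushouts (hence mapping cylinders) but does not by itself guarantee that the \emph{strict} factorization intermediates from $\cC$ land in $\cA$, which is what you actually need to run the $S'_\bullet$ argument without enlargement. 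Relatedly, the claim that the $S'_n$ conditions ``coincide on the nose'' with the $\mathrm{Gap}([n],-)$ conditions glosses over the $1$-categorical versus $\infty$-categorical discrepancy: a homotopy cocartesian square in a Waldhausen category (in the zigzag-to-genuine-pushout sense) and a cocartesian square in the associated $\infty$-category are matched only after a nontrivial localization argument, and $|w(\cdot)|$ versus $|\iota(\cdot)|$ is compared via a Dwyer--Kan-style theorem, not by inspection. Your concluding paragraph acknowledges that the crux is exactly Waldhausen approximation for the Yoneda functor $\cA\to\cM(\cA)$; that is correct, and it is also exactly why the paper defers to \cite{BGT} rather than reproving the comparison. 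So the proposal is best read as an accurate roadmap to the cited results rather than a self-contained proof.
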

\begin{proof}
This combines \cite{BGT} Theorems 7.8 and 7.11 and Corollary 7.12.
\end{proof}

In particular, this will be true for all of our categories of interest, as any stable $S$-category embeds into a stable simplicial model category (see \cite{ToeVez} page 789 paragraph 1).

\begin{defn}
Suppose that $\cC$ is an SW-category.  In analogy with the Waldhausen case, we may define a simplicial object in categories $\tilde S_\bullet\cC$ as follows.  We start by defining it as a simplicial set.  For each $n$, define $\tilde S_n\cC$ to the full subcategory of $\text{Fun}(\tilde{\text{Ar}}[n], \cC)$ on the functors $F:\tilde{\text{Ar}}[n]\to\cC$ satisfying the following conditions:
\begin{itemize}
    \item $F(i, i)$ is initial for any $i$ between $0$ and $n$
    \item Whenever $j<k$, one has $F(i, j)\to F(i, k)$ is in $\cof(\cC)$
    \item Whenever $i<j<k$ one has that 
    \[
    F(i, j)\rightarrow F(i, k)\leftarrow F(j, k)
    \]
    is a subtraction sequence.
\end{itemize}
The face maps $d_k:\tilde S_n\cC\to\tilde S_{n-1}\cC$ are given by deleting the $k$th row and column of the requisite functors from $\tilde{\text{Ar}}[n]$, and composing the appropriate morphisms to yield a functor from $\tilde{\text{Ar}}[n-1]$ into $\cC$.  The degeneracy maps are given by inserting identity maps into the $i$th row and column.  Furthermore, each $\tilde S_n\cC$ naturally has the structure of an SW-category (we can check whether a morphism of diagrams is a cofibration, complement, or weak equivalence object-wise), so we can iterate this construction to an $n$-ary simplicial structure $\tilde S^n_\bullet\cC$.
\end{defn}

\begin{defn}
Given an SW-category $\cC$, one can define its \emph{K-theory spectrum} $K(\cC)$ as (the fibrant-cofibrant repacement of what) follows.  Define the $n$th space of the spectrum to be $|w\tilde S_\bullet^n\cC|$.  By \cite{Cam} corollary 4.17, one gets a natural map $|w\cC|\to\Omega|w\tilde S_\bullet\cC|$ which, while not an equivalence, becomes one on higher levels, so we actually have a spectrum.
\end{defn}

As an example, $|w\tilde S_\bullet\cC|\overset{\sim}{\to}\Omega|w\tilde S_\bullet\tilde S_\bullet\cC|$.  The spectrum $K(\cC)$ also has the natural structure of a $\bE_\infty$-ring spectrum whenever $\cC$ is equipped with a symmetric monoidal structure compatible with subtraction (\cite{Cam} theorem 5.14).  This construction defines a functor from SW-categories equipped with exact functors of SW-categories to spectra.

\begin{defn}
Let $\cC$ be an SW-category and $\cA$ be a pointed homotopy cocomplete $\infty$-category.  A \emph{weakly W-exact functor} $F:\cC\to\cA$ actually consists of a triple $(F_!, F^!, F_w)$ of functors such that 
\begin{itemize}
    \item $F_!$ is a functor $F_!:\cof(\cC)\to\cA$.  We abbreviate $F_!(i)$ to $i_!$ for all cofibrations $i$
    \item $F^!$ is a functor $F^!:\comp(\cC)^{op}\to\cA$.  We abbreviate $F^!(j)$ to $j^!$ for all complement maps $j$
    \item $F_w$ is a functor $F_w:w\cC\to\iota(\cA)$.  We abbreviate $F_w(f)$ to $f_w$ for all weak equivalences
    \item For all objects $X\in\cC$, one has $F_!(X)=F^!(X)=F_w(X)=:F(X)$
    \item Given a cartesian square in $\cC$
    \[
    \begin{tikzcd}
 X\arrow[d, "i"]\arrow[r, hook, "j"] & Z \arrow[d, "i'"]\\
  Y \arrow[r, hook, "j'"]& W
\end{tikzcd}
    \]
    with horizontal morphisms cofibrations and vertical morphisms complement maps, one gets a commutative diagram
    \[
    \begin{tikzcd}
 F(X)\arrow[r, "j_!"] & F(Z) \\
  F(Y)\arrow[u, "i^!"]\arrow[r, "j'_!"]& F(W)\arrow[u, "i'^!"]
\end{tikzcd}
    \]
    \item For all subtraction sequences $Z\overset{i}{\hookrightarrow}X\overset{j}{\underset{\circ}{\leftarrow}}U$, one gets a homotopy cocartesian square
    \[
    \begin{tikzcd}
 F(Z)\arrow[d]\arrow[r, "i_!"] & F(X) \arrow[d, "j^!"]\\
   0 \arrow[r]& F(U)
\end{tikzcd}
    \]
    in $\cA$
    \item For all commutative squares 
    \[
    \begin{tikzcd}
 X\arrow[d, "g"]\arrow[r, hook, "f"] & Z \arrow[d, "g'"]\\
  Y \arrow[r, hook, "f'"]& W
\end{tikzcd}
    \]
    with vertical morphisms weak equivalences and horizontal morphisms cofibrations, one gets a commutative square
    \[
    \begin{tikzcd}
 F(X)\arrow[d, "g_w"]\arrow[r, "f_!"] & F(Z) \arrow[d, "g'_w"]\\
  F(Y) \arrow[r, "f'_!"]& F(W)
\end{tikzcd}
    \]
    in $\cA$. One gets an analogous diagram if one replaces cofibrations with complements
\end{itemize}
\end{defn}

Our reason for introducing such a map is that such a map induces a contractible space of maps of K-theory spectra.

\begin{rem}
Note that this is precisely the same as the definition of a weakly W-exact functor where the target is a Waldhausen category (\cite{CWZ} definition 2.17). If $F:\cC\to\cA$ is a weakly W-exact functor with target a Waldhausen category, it induces a map $K(\cF): K(\cC)\to K(\cA)$ (\cite{CWZ} proposition 2.19).  We make use of this in the following theorem.
\end{rem}

\begin{thm}
Given a weakly W-exact functor $F:\cC\to\cA$, it may be composed with the Yoneda embedding $\cA\to\cM(\cA)$ to obtain a weakly W-exact functor $\iota\circ F:\cC\to\cM(\cA)$ to the good Waldhausen $\cM(\cA)$ which yields a map on K-theory $K(\cC)\to K(\cA)$.  This construction is functorial in exact functors both of SW-categories and of pointed finitely homotopy cocomplete $\infty$-categories.
\end{thm}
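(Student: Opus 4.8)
The plan is to reduce everything to the fact recorded in the Remark above (i.e.\ \cite{CWZ} Proposition 2.19): a weakly W-exact functor with \emph{Waldhausen} target induces a map on K-theory. Since $K(\cA)$ was \emph{defined} as $K(\cM(\cA))$, the only real work is to check that post-composing $F=(F_!,F^!,F_w)$ with the Yoneda embedding $\iota\colon\cA\to\cM(\cA)$ yields a weakly W-exact functor $\iota\circ F\colon\cC\to\cM(\cA)$ in the sense of \cite{CWZ} Definition 2.17. Because $\iota$ is an ordinary functor, I simply set $(\iota\circ F)_!=\iota\circ F_!$, $(\iota\circ F)^!=\iota\circ F^!$ and $(\iota\circ F)_w=\iota\circ F_w$; as $\iota$ sends equivalences of $\cA$ to weak equivalences of $\cM(\cA)$, the last component lands in $w\cM(\cA)$ as required, and agreement on objects is inherited from $F$.

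The crux is that $\iota$ is exact in the relevant sense: it preserves the zero object and carries homotopy cocartesian squares of $\cA$ to homotopy cocartesian squares of $\cM(\cA)$ (in the Waldhausen sense, i.e.\ squares weakly equivalent through a zigzag to a pushout along a cofibration). This follows from the universal property of the left Bousfield localization $\cP_{ex}(\cA)$. Write $L\colon\cP(\cA)\to\cP_{ex}(\cA)$ for the localization and $y$ for the ordinary Yoneda embedding, so $\iota=L\circ y$. For a local object $G$ (a presheaf carrying finite homotopy colimits in $\cA$ to finite homotopy limits of simplicial sets) and a finite homotopy colimit $X\simeq\mathrm{hocolim}_i X_i$ in $\cA$, mapping into $G$ gives $\mathrm{Map}(\iota X,G)\simeq G(X)\simeq\mathrm{holim}_i G(X_i)\simeq\mathrm{holim}_i\mathrm{Map}(\iota X_i,G)\simeq\mathrm{Map}(\mathrm{hocolim}_i\iota X_i,G)$, where the outer equivalences are (derived) Yoneda and the middle one uses that $G$ is local. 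Since local objects detect equivalences in $\cP_{ex}(\cA)$, we get $\iota X\simeq\mathrm{hocolim}_i\iota X_i$; in particular $\iota 0\simeq 0$ and $\iota$ preserves homotopy pushouts. As $\cM(\cA)$ is a Waldhausen subcategory of $\cP_{ex}(\cA)$ closed under the operations in play and equipped with functorial factorization, these homotopy colimits are computed the same way in $\cM(\cA)$, where a homotopy cocartesian square is exactly one weakly equivalent to a pushout along a cofibration.

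With this in hand I would run through the seven axioms of a weakly W-exact functor for $\iota\circ F$, each time combining the corresponding property of $F$ with a property of $\iota$: the cartesian-square axiom and the two weak-equivalence axioms only require that $\iota$ preserve commutativity of diagrams (automatic) and send equivalences to weak equivalences; the subtraction-sequence axiom requires that $\iota$ carry the homotopy cocartesian square built by $F$ from a subtraction sequence to a homotopy cocartesian square in $\cM(\cA)$, which is precisely the content of the previous paragraph; and object-agreement is immediate. Hence $\iota\circ F\colon\cC\to\cM(\cA)$ is weakly W-exact with Waldhausen target, so by \cite{CWZ} Proposition 2.19 it induces a map $K(\cC)\to K(\cM(\cA))=K(\cA)$.

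For functoriality, an exact functor $G\colon\cC'\to\cC$ of SW-categories gives a weakly W-exact $F\circ G$ with $\iota\circ(F\circ G)=(\iota\circ F)\circ G$, and the K-theory maps compose since \cite{CWZ} Proposition 2.19 is natural; likewise an exact (equivalently weakly exact) functor $H\colon\cA\to\cB$ of pointed finitely homotopy cocomplete $\infty$-categories induces an exact functor $\cM(H)\colon\cM(\cA)\to\cM(\cB)$ of Waldhausen categories by restricting the left Kan extension (as recalled before the statement), and $\iota_\cB\circ H\simeq\cM(H)\circ\iota_\cA$, so the two induced maps $K(\cC)\to K(\cB)$ agree. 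I expect the main obstacle to be exactly the exactness of $\iota$ into $\cM(\cA)$: matching the $\infty$-categorical notion of a homotopy cocartesian square in $\cA$ with the Waldhausen-categorical one in $\cM(\cA)$, where the functorial factorization in $\cM(\cA)$ and the precise form of the localization $\cP_{ex}(\cA)$ must be used with care; the rest is formal bookkeeping with the axioms.
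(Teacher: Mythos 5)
Your proposal is correct and follows essentially the same route as the paper: both reduce to \cite{CWZ} Proposition 2.19 by showing that $\iota=L\circ y\colon\cA\to\cM(\cA)$ is an exact (finite-homotopy-colimit-preserving) simplicial functor, so that post-composition preserves the weakly W-exact structure, and both derive functoriality from the functoriality of the auxiliary constructions. The only difference is one of exposition: where the paper asserts that $\iota$ preserves finite homotopy colimits ``by construction'' of the left Bousfield localization $\cP_{ex}(\cA)$, you spell out the standard argument via local objects detecting equivalences, which is a welcome elaboration of the same key step rather than a different approach.
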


\begin{proof}
Let us begin by noting that $\cM(\cA)$ has a natural enrichment as a simplicial category which makes the restriction of the Yoneda embedding $\iota:\cA\to\cM(\cA)$ into a functor of simplicial categories (and indeed one which preserves finite homotopy colimits by construction).  As such, the corresponding map $\iota\circ F$ remains weakly W-exact.  Furthermore, by construction, it must descend to a weakly W-exact map on the level of the underlying Waldhausen category $\cM(\cA)$.  Thus, one has a natural map on K-theory $K(\cC)\to K(\cA)$ induced from $\iota\circ F$.  Furthermore, one has that functoriality in weakly W-exact functors and weakly exact functors arises from the functoriality of the corresponding constructions on their respective K-theories.
\end{proof}

\section{The Six Functors Formalism and Derived Motivic Measures}
\subsection{A Brief Description of the Generalized Six Functors Formalism}

In this section, we will discuss some of the basic aspects of the $\infty$-categorical theory of six functors formalisms described by Adeel Khan in \cite{Khan} .  We will not go into detail regarding the proofs, and will only supply the details necessary for the sections that follow.  It should be noted that none of this section is novel.  Any interested reader is urged to peruse the stellar paper by Khan on the subject.  
Before we begin, we will detail a few conventions.

For the rest of the section $\cS$ will refer to a category of classical Noetherian schemes over some fixed Noetherian base. While Khan's original formalism works more generally for derived algebraic spaces, for the moment, we are only concerned with a more limited setup.  Note that these stronger conditions imply, among other things, that $\cS$ is actually a $1$-category (a discrete $\infty$-category).  Furthermore, we will assume that $\cS$ is such that for all $S\in\cS$, one has
\begin{itemize}
    \item $U\in\cS$ for every quasicompact open subspace $U\subseteq S$
    \item $Z\in\cS$ for every closed subspace $Z\subseteq S$
    \item $\bP(\cE)\in\cS$ for every finite locally free sheaf $\cE$ on $S$
\end{itemize}
We further fix a class of so-called \emph{admissible} morphisms in $\cS$, which we mandate to contain all open immersions and all projections $X\times \bP^n\to X$, to be closed under composition and base change, and to satisfy 2-out-of-3.  We denote by $\cA\subseteq\cS$ the subcategory of admissible morphisms.

If we consider a presheaf of $\infty$-categories $\bD^*$ on $\cS$, we will use the notation $\bD(S):=\bD^*(S)$ for every $S\in\cS$.  Furthermore, for every morphism $f:X\to Y\in\cS$, we will denote by
\[
f^*:\bD(Y)\to\bD(X)
\]
the inverse image $\bD^*(f)$ of $f$.

If, furthermore, $\bD^*$ takes values in presentable $\infty$-categories and colimit-preserving functors, we refer to $\bD$ as a \emph{presheaf of presentable $\infty$-categories}.  Note that this, in particular, means that $f^*$ must admit a right adjoint, which we denote by $f_*$ and call the direct image of $f$.

Finally, if $\bD^*$ further factors through symmetric monoidal presentable $\infty$-categories (presentable symmetric monoidal $\infty$-categories for which the tensor product commutes with colimits in each variable), we refer to $\bD^*$ as a \emph{presheaf of symmetric monoidal presentable $\infty$-categories}.  Given any $S\in\cS$, we will let $\otimes:\bD(S)\times\bD(S)\to\bD(S)$ denote the monoidal product and $\mathbf{1}_S$ denote the monoidal unit. Now, since $\otimes$ commutes with colimits in each variable, it admits a right-adjoint internal hom bifunctor $\underline{\Hom}:\bD(S)^{op}\times\bD(S)\to\bD(S)$.  From now on, when we talk about this last scenario, we will simply omit the upper $*$ from the notation unless we want to make it clear that we are referring to $\bD$ as presheaf with respect to $*$ morphisms.

\begin{defn}
A \emph{premotivic $\infty$-category} or \emph{$(*, \#, \otimes)$-formalism} on $(\cS, \cA)$ is a presheaf of symmetric monoidal presentable $\infty$-categories $\bD$ on $\cS$ which satisfies the following additional properties
\begin{itemize}
    \item For every morphism $f:T\to S$ in $\cA$, the inverse image functor admits a left-adjoint $f_\#:\bD(T)\to\bD(S)$ which we call the \emph{$\#$-direct image}
    \item $f_\#:\bD(T)\to\bD(S)$ is a morphism of $\bD(S)$-modules, where we note that $\bD(T)$ has a natural $\bD(S)$-module structure via the symmetric monoidal functor $f^*$.  In other words, $\bD$ satisfies the \emph{projection formula} with respect to $\#$-direct images
    \item $\bD$ satisfies \emph{admissible base change} for $\#$-direct image.  In other words, given any cartesian diagram of the form
 \[
    \begin{tikzcd}
 T'\arrow[d, "q"]\arrow[r, "g"]\arrow[dr, phantom, "\lrcorner", very near start] & S' \arrow[d, "p"]\\
  T \arrow[r, "f"]& S
\end{tikzcd}
    \]    
    with $p$ and $q$ admissible, then there is a natural equivalence of functors 
    \[
    \Ex_\#^*:q_\#g^*\overset{\sim}{\to} f^*p_\#
    \]
    \item Given any finite family $S_\alpha$ in $\cS$, the induced functor 
    \[
    \bD(\amalg_\alpha S_\alpha)\to\Pi_\alpha\bD(S_\alpha)
    \]
    from inclusion is an equivalence.  In other words, $\bD$ satisfies additivity
\end{itemize}
Generally speaking, when talking about premotivic $\infty$-categories on $\cS$, if admissible morphisms are not specified as part of the data, then we are making the assumption that the admissible morphisms are simply the smooth morphisms in $\cS$ (or $\cA=Sm$). 
\end{defn}

\begin{rem}
Note that a $(*, \#, \otimes)$-formalism is the same as a premotivic $\infty$-category in the sense of Cisinski-D\'eglise (hence the conflation of the two terms above), so we may directly import the notions of premotivic morphism and premotivic adjunction to this setting (indeed, we note that premotivic stable-symmetric monoidal model categories are a direct realization of our current situation in more classical language when specified to the stable case).  
\end{rem}

We will delay discussion of examples until the following section.

\begin{defn}
Let $\bD$ be a $(*, \#, \otimes)$-formalism on $(\cS, \cA)$, and take $S\in\cS$ and a finite locally free sheaf $\cE$ on $S$.  Denote the total space $\bV_S(\cE)$ by $E$, and let $p:E\to S$ be the projection and $s:S\to E$ be the zero section.  Supposing that $p$ is admissible, define the \emph{Thom twist} $\langle\cE\rangle$ to be the endofunctor on $\bD(S)$ given by
\[
\cF\mapsto \cF\langle\cE\rangle:=p_\# s_*(\cF).
\]
\end{defn}

\begin{defn}
A $(*, \#, \otimes)$-formalism on $(\cS, \cA)$ \emph{satisfies the Voevodsky conditions} if it satisfies the following conditions
\begin{itemize}
    \item \emph{Homotopy invariance:} for every $S\in \cS$ and every vector bundle $p:E\to S$ on S, the unit map
    \[
    \id\to p_*p^*
    \]
    is an equivalence
    \item \emph{Localization:} for every decomposition of a variety $X$ into a closed subvariety and a complementary open
    \[
    Z\overset{i}{\hookrightarrow} X\overset{j}{\hookleftarrow} U
    \]
    in $\cS$, $i_*$ is a fully faithful functor with essential image spanned by objects in the kernel of $j^*$
    \item \emph{Thom stability:} for every $S\in\cS$ and every locally free sheaf $\cE$ on $S$, the Thom twist endofunctor $\langle\cE\rangle$ is an equivalence on $\bD(S)$
\end{itemize}

We will also use the term \emph{motivic category over $(\cS, \cA)$} to describe a $(*, \#, \otimes)$-formalism over $(\cS, \cA)$ which satisfies the Voevodsky conditions, since the Voevodsky conditions are equivalent (in the triangulated case) to the conditions of Cisinski-D\'eglise under which a premotivic category defines a motivic category.
\end{defn}

\begin{rem}
If $\bD$ is a $(*, \#, \otimes)$-formalism on $\cS$ satisfying the Voevodsky conditions, then the $\infty$-categories $\bD$ are stable.
\end{rem}

Note that the features we have already discussed automatically imply some of the most characteristic features of the notion of a six functors formalism, namely, the exceptional operations.

\begin{thm}
Given any finite type morphism $f:\bD(X)\to\bD(Y)$, there exists an adjunction
\[
(f_!\dashv f^!):\bD(X)\rightleftarrows\bD(Y)
\]
and a natural transformation $\alpha_f:f_!\to f_*$ satisfying the following conditions:
\begin{itemize}
    \item There are canonical equivalences $f_!\simeq f_\#$ and $f^!\simeq f^*$ if $f$ is an open immersion
    \item $\alpha_f$ is an equivalence if $f$ is a proper morphism
    \item The functor $f_!$ satisfies base change, or in other words, given a cartesian diagram
    \[
    \begin{tikzcd}
 X'\arrow[d, "q"]\arrow[r, "g"]\arrow[dr, phantom, "\lrcorner", very near start] & Y' \arrow[d, "p"]\\
  X \arrow[r, "f"]& Y
\end{tikzcd}
    \]    
    in $\cS$, the natural transformations 
    \[
    \Ex_!^*:p^*f_!\to g_!q^*\text{ and }\Ex_*^!: q_*g^!\to f^!p_*
    \]
    are equivalences
    \item The functor $f_!$ satisfies the projection formula.  In other words, $f_!$ is a morphism of $\bD(Y)$-modules, where $\bD(X)$ is regarded as a $\bD(Y)$ module via the symmetric monoidal functor $f^*$.  Furthermore, the canonical morphisms
    \begin{align*}
        \cF\otimes f_!(\cG)&\to f_!(f^*(\cF)\otimes\cG),\\
        \underline{\Hom}(f^*(\cF), f^!(\cF'))&\to f^!(\underline{\Hom}(\cF, \cF')),\\
        f_*(\underline{\Hom}(\cF, f^!(\cG)))&\to\underline{\Hom}(f_!(\cF), \cG)\\
    \end{align*}
    are equivalences for all $\cF,\cF'\in\bD(X)$ and $\cG\in\bD(Y)$
\end{itemize}
\end{thm}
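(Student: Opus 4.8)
The plan is to recover Khan's construction of the exceptional operations, which is the $\infty$-categorical incarnation of Deligne's gluing argument. Since the morphisms in question are separated and of finite type over a Noetherian base, Nagata compactification provides, for each $f : X \to Y$, a factorization $f = \bar f \circ j$ with $j : X \hookrightarrow \bar X$ an open immersion and $\bar f : \bar X \to Y$ proper. Because an open immersion is admissible, $j_\#$ is available, and one is forced to set $j_! := j_\#$; a proper map leaves no choice but $\bar f_! := \bar f_*$. One therefore \emph{defines}
\[
f_! := \bar f_* \circ j_\#,
\]
and builds $\alpha_f : f_! \to f_*$ by post-composing the canonical comparison $j_\# \to j_*$ (the composite $j_\# \to j_\# j^* j_* \to j_*$ of a unit and a counit) with $\bar f_*$ and using $\bar f_* j_* = f_*$.

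First I would show this is independent of the compactification. Given two factorizations, the closure of the diagonal in the fibre product of the two compactifications over $Y$ produces a common refinement $\bar X$ with proper $Y$-morphisms $p_i : \bar X \to \bar X_i$ restricting to the identity on $X$, so it suffices to identify $\bar f_{i*} j_{i\#}$ with $\bar f_* j_\#$; this reduces to the compatibility of proper pushforward with $\#$-pushforward along open immersions, obtained by applying proper base change to the square witnessing $\bar X$ over $\bar X_i$. This is the precise point at which the Voevodsky conditions enter: localization, homotopy invariance, and Thom stability are exactly what make proper base change available — and, via relative purity, make proper pushforward colimit-preserving — in the abstract setting, with no underlying geometric input. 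In the $\infty$-categorical world this bare independence statement must be promoted to the assertion that $j \mapsto j_\#$ and $\bar f \mapsto \bar f_*$ glue to a functor out of an appropriate $\infty$-category of correspondences of $\cS$; this is carried out with the Liu--Zheng/Gaitsgory--Rozenblyum machinery of $(\infty,2)$-categories of correspondences, and I would invoke Khan's execution of it rather than reproduce it.

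Next I would produce $f^!$ as a right adjoint. As $\bD$ takes presentable values it is enough that $f_! = \bar f_* j_\#$ preserve colimits; $j_\#$ does, being a left adjoint, and $\bar f_*$ does because a proper morphism is locally projective, so one reduces — via relative purity for smooth proper maps (a consequence of Thom stability) and via localization for closed immersions — to a projective bundle projection and a closed immersion, for both of which $(-)_*$ visibly commutes with colimits; the adjoint functor theorem then supplies $f^!$. The remaining assertions are formal. The identities $f_! \cong f_\#$ and $f^! \cong f^*$ for an open immersion come from the trivial factorization $j = f$, $\bar f = \id$, and $\alpha_f$ is an equivalence for proper $f$ from the dual trivial factorization $j = \id$. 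The base-change equivalence $\Ex_!^*$ is obtained by splitting the given cartesian square into its open-immersion part, where it is admissible base change for $\#$, and its proper part, where it is the proper base change just discussed; $\Ex_*^!$ follows by passing to right adjoints. Finally the three projection/internal-hom equivalences are assembled from the projection formula for $\#$-pushforward together with the proper projection formula, giving $\cF \otimes f_!(\cG) \overset{\sim}{\to} f_!(f^*\cF \otimes \cG)$, the other two being its adjoints.

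The main obstacle will be the coherence in the second paragraph. On homotopy categories Deligne's patching is a finite diagram chase, but coherently one needs the full force of the correspondence-category formalism to witness simultaneously all the higher compatibilities among the various $j_\#$'s, $\bar f_*$'s, base-change squares, and factorizations. Everything downstream — existence of the right adjoint, the open-immersion and proper identities, base change, and the projection formulas — is then bookkeeping with the two generating classes of morphisms. Accordingly, the actual proof reduces to citing \cite{Khan} for both the construction and its listed properties.
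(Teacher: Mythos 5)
Your proposal accurately reconstructs the Deligne--Nagata gluing argument underlying Khan's construction of the exceptional operations, and you correctly conclude that the proof is ultimately a direct citation of \cite{Khan}, Theorem 2.34 --- which is precisely what the paper does, with no further elaboration. The one small imprecision is the phrase ``a proper morphism is locally projective''; what is actually used is that a proper morphism of Noetherian schemes admits a projective modification via Chow's lemma, which is how Khan (following Ayoub and Cisinski--D\'eglise) reduces colimit-preservation of $\bar f_*$ to the projective bundle and closed immersion cases, but this does not affect the substance of your argument.
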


\begin{proof}
This is \cite{Khan} theorem 2.34.
\end{proof}

In fact, if $f$ is \'etale, then the natural map $f^!\to f^*$ is an equivalence.

Before we continue, at this point we should introduce a bit of notation.  Suppose that $f:X\to Y$ is a morphism of finite type in $\cS$.  By the above, we have adjunctions $f^*\dashv f_*$ and $f_!\dashv f^!$.  We will use the notation
\begin{itemize}
    \item $\eta_f^*:\id\to f_*f^*$ and $\epsilon_f^*:f^*f_*\to\id$ for the unit and counit of the first adjunction
    \item $\eta_f^!:\id\to f^!f_!$ and $\epsilon_f^!:f_!f^!\to\id$ for the unit and counit of the second adjunction
\end{itemize}
If, in addition, $f$ happens to be smooth, we have a third adjunction $f_\#\dashv f^*$, and we will denote its unit and counit by 
\[
\eta_f^\#:\id\to f^*f_\#\text{ and }\epsilon_f^\#:f_\#f^*\to\id
\]
if needed.
\begin{defn}
A premotivic category $\bD$ on $(\cS, \cA)$ is \emph{compactly generated} if
\begin{itemize}
    \item For every $S\in\cS$, the $\infty$-category $\bD(S)$ is compactly generated
    \item For every morphism $f:T\to S$ in $\cS$, the inverse image functor $f^*:\bD(S)\to\bD(T)$ is a compact functor (preserves compact objects)
\end{itemize}
\end{defn}

\begin{defn}
Given a premotivic category $\bD$ over $(\cS, \cA)$, we refer to $\bD$ as \emph{continuous} if for every cofiltered system of affine schemes $(S_\alpha)_\alpha$ in $\cS$ with limit $S$, the canonical functor
\[
\varprojlim_\alpha\bD(S_\alpha)\to\bD(S)
\]
is an equivalence.
\end{defn}

In practice, essentially all of the (pre)motivic categories we encounter will be compactly generated, and many will be continuous as well.  There will be more on compact (and in particular constructible) generation towards the end of the section.

\begin{thm}
Suppose that $\bD$ is a motivic category over $(\cS, \cA)$ and that one has a cartesian diagram
 \[
    \begin{tikzcd}
 X'\arrow[d, "q"]\arrow[r, "g"]\arrow[dr, phantom, "\lrcorner", very near start] & Y' \arrow[d, "p"]\\
  X \arrow[r, "f"]& Y
\end{tikzcd}
    \]    
    in $\cS$.  Then we have the following different forms of base change in addition to those discussed before (presented in their most general forms):
    \begin{itemize}
        \item \emph{Proper base change:}  If $f$ is a proper morphism, then there is a canonical equivalence 
        \[
        \Ex_*^*:p^*f_*\to g_*q^*
        \]
        of functors $\bD(X)\to\bD(Y')$
        \item \emph{Smooth-proper base change:} If $f$ is a proper morphism and $p$ and $q$ are smooth, then there is a canonical equivalence
        \[
        \Ex_{\#*}:p_\#g_*\to f_*q_\#
        \]
        of functors $\bD(X')\to\bD(Y)$
        \item \emph{Finite type-smooth base change:} If $f$ is of finite type and $p$ and $q$ are smooth, then there is a natural equivalence
        \[
        \Ex^{*!}:q^*f^!\to g^!p^*
        \]
        of functors $\bD(Y)\to\bD(X')$
        \item \emph{Finite type-proper base change:} If $f$ is of finite type and $p$ is proper, then there is a natural equivalence
        \[
        \Ex_{!*}:f_!q_*\to p_*g_!
        \]
        of functors $\bD(X')\to\bD(Y)$
    \end{itemize}
\end{thm}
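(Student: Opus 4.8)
The plan is to derive all four base-change equivalences formally from data already in hand. From the preceding theorem I would use the exceptional adjunctions $f_!\dashv f^!$, the $!$-base change equivalences $\Ex_!^*\colon p^*f_!\xrightarrow{\sim}g_!q^*$ and $\Ex_*^!\colon q_*g^!\xrightarrow{\sim}f^!p_*$, and the natural equivalence $\alpha_f\colon f_!\xrightarrow{\sim}f_*$ for proper $f$; from the premotivic axioms, the $\#$-base change equivalence $\Ex_\#^*\colon q_\#g^*\xrightarrow{\sim}f^*p_\#$; and one honestly non-formal ingredient, relative purity, which upgrades the equivalence $f^!\xrightarrow{\sim}f^*$ for \'etale $f$ noted above to an equivalence $f^!\simeq f^*\langle\Omega_f\rangle$ with an invertible Thom twist when $f$ is smooth, equivalently $f_\#\simeq f_!\langle\Omega_f\rangle^{-1}$ after reindexing. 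Throughout I would use freely that properness and smoothness are stable under base change, so $g$ inherits properness (resp.\ smoothness) from $f$ and $q$ from $p$, that $\bD$ is stable so the mate constructions below make sense $\infty$-categorically, and that $(-)_!$ is compatible with composition, $(g\circ f)_!\simeq g_!\circ f_!$. In each statement the canonical map is the evident Beck--Chevalley/mate transformation built from the relevant units and counits, and the work is to recognise it as an equivalence.

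The two cases involving only $*$-pushforwards are immediate. For proper base change, $f$ and hence $g$ are proper, so $\alpha_f,\alpha_g$ are equivalences and $\Ex_*^*$ is the composite $p^*f_*\xleftarrow{\sim}p^*f_!\xrightarrow{\Ex_!^*}g_!q^*\xrightarrow{\sim}g_*q^*$. For finite type-proper base change, $p$ and hence $q$ are proper, so $\alpha_p,\alpha_q$ are equivalences, and using $f\circ q=p\circ g$ together with the composition isomorphisms for $(-)_!$ one takes $\Ex_{!*}$ to be $f_!q_*\xleftarrow{\sim}f_!q_!\xrightarrow{\sim}(fq)_!=(pg)_!\xrightarrow{\sim}p_!g_!\xrightarrow{\sim}p_*g_!$. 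In both cases a short diagram chase identifies this composite with the canonical transformation.

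For smooth-proper base change I would first use properness of $f$ and $g$ to replace $f_*,g_*$ by $f_!,g_!$, reducing to an equivalence $p_\#g_!\xrightarrow{\sim}f_!q_\#$. This is the mate of $\Ex_!^*$ along the smooth adjunctions $q_\#\dashv q^*$ and $p_\#\dashv p^*$, i.e.\ the composite
\[
p_\#g_!\ \xrightarrow{p_\#g_!\,\eta_q^\#}\ p_\#g_!q^*q_\#\ \xrightarrow{p_\#(\Ex_!^*)^{-1}q_\#}\ p_\#p^*f_!q_\#\ \xrightarrow{\epsilon_p^\#f_!q_\#}\ f_!q_\#,
\]
and here is where purity enters: writing $p_\#\simeq p_!\langle\Omega_p\rangle^{-1}$ and $q_\#\simeq q_!\langle\Omega_q\rangle^{-1}$ with $\Omega_q\simeq g^*\Omega_p$ compatibly, one transports the invertible twists past the base-change maps and reduces the claim to the composition isomorphism $(pg)_!\simeq(fq)_!$ used above. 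For finite type-smooth base change I would factor $f=\bar f\circ j$ into an open immersion followed by a proper morphism and split the given cartesian square into the two corresponding cartesian squares; over $j$ the desired equivalence is just compatibility of $(-)_\#$ with composition (every morphism there is smooth and $j_!\simeq j_\#$), over $\bar f$ it is the smooth-proper case just treated, and pasting gives $p_\#g_!\xrightarrow{\sim}f_!q_\#$ for $f$ merely of finite type. Passing to right adjoints along $f_!\dashv f^!$, $g_!\dashv g^!$, $p_\#\dashv p^*$ and $q_\#\dashv q^*$ then turns this into the asserted $\Ex^{*!}\colon q^*f^!\xrightarrow{\sim}g^!p^*$.

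The routine content is all adjunction bookkeeping: assembling the Beck--Chevalley transformations, checking the composites above are the canonical ones, and tracking Thom twists. The one real obstacle is the smooth-proper step, namely showing that the mate of $!$-base change remains an equivalence when the vertical maps are smooth, which genuinely uses relative purity. If one prefers, this (and indeed all four statements) can simply be cited from \cite{Khan}, since none of them is new; spelling out the derivation only serves to keep the exposition self-contained.
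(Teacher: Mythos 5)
The paper's proof of this theorem is a one-line citation to Khan's paper (theorem 2.24, corollary 2.37, corollary 2.39), whereas you actually derive all four statements from the more primitive data already established: $!$-base change $\Ex_!^*$, the proper comparison $\alpha_f$, relative purity, $\#$-base change, and Nagata compactification. Your route is therefore genuinely different from the paper's and more self-contained. Briefly checking the structure: the two cases involving only $*$-pushforwards are handled exactly as you say (proper $\Rightarrow$ replace $f_*,g_*$ by $f_!,g_!$ and invoke $\Ex_!^*$; proper $p\Rightarrow$ replace $q_*,p_*$ by $q_!,p_!$ and use compatibility of $(-)_!$ with composition along $fq=pg$). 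The smooth-proper case via purity is the only non-formal step, and your reduction works: after replacing $f_*,g_*$ by $f_!,g_!$ and writing $p_\#=p_!\langle\cL_p\rangle$ and $q_\#=q_!\langle\cL_q\rangle$ with $\cL_q\simeq g^*\cL_p$, the twists pass through $g_!$ (the paper's remark that Thom twists commute with the six operations) and the claim collapses to $(pg)_!\simeq(fq)_!$. Note a sign slip in your statement of purity as an adjoint: $f^!\simeq f^*\langle\cL_f\rangle$ gives $f_!\simeq f_\#\langle\cL_f\rangle^{-1}$, hence $f_\#\simeq f_!\langle\cL_f\rangle$, not $f_!\langle\cL_f\rangle^{-1}$; since the twist is an autoequivalence either way the argument survives, but the index needs correcting. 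Finally, passing to right adjoints from $p_\#g_!\simeq f_!q_\#$ to $g^!p^*\simeq q^*f^!$ is legitimate because $(p_\#g_!)^R=g^!p^*$ and $(f_!q_\#)^R=q^*f^!$, and the Nagata factorisation handles general finite type $f$. What your derivation buys is explicitness and a self-contained account within the six-functor axioms already listed; what the citation buys is brevity and the assurance that the transformation being identified as an equivalence is indeed the canonical Beck--Chevalley map in each case (your "short diagram chase" sentences carry that burden, and are the one place where the sketch asks the reader to accept some unwound routine).
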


\begin{proof}
This consists of various statements in \cite{Khan} theorem 2.24, corollary 2.37, and corollary 2.39.
\end{proof}

Really the crux of many of the proofs in the following section will be the various forms of base change we have discussed.

\begin{rem}
In what follows, noting that Tate twists generally commute with all of the six operations, we abuse notation by doing things such as writing $f^*\langle\cE\rangle$ instead of $\langle f^*\cE\rangle\circ f^*$, etc.
\end{rem}

\begin{thm}
Consider $S\in\cS$ and two smooth $S$-schemes $p:X\to S$ and $q:Y\to S$.  Then one has the following two results:
\begin{itemize}
    \item \emph{Relative purity:} If $X$ and $Y$ are connected via a closed immersion $i:X\hookrightarrow Y$ over $S$, then there is a canonical isomorphism 
\[
q_\#i_*\simeq p_\#\langle\cN_{X/Y}\rangle,
\]
with $\cN_{X/Y}$ the conormal sheaf of $i$
    \item If $f:X\to Y$ is an unrammified morphism over $S$, then there is a canonical isomorphism 
    \[
    f^!q^*\simeq p^*\langle\cL_{X/Y}\rangle
    \]
    where $\cL_{X/Y}$ is the relative cotangent complex of $f$
\end{itemize}
\end{thm}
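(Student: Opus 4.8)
The plan is to reduce both statements to the fundamental purity/localization machinery already available in a motivic $\infty$-category, exactly as in the triangulated setting of Cisinski–Déglise, and to import those arguments verbatim through the homotopy-coherent dictionary. For the first statement (relative purity), I would factor $q_\# i_*$ by noting that $q = p \circ (q|_X)$ is false in general, so instead I would use the closed immersion $i \colon X \hookrightarrow Y$ over $S$ together with the deformation to the normal cone: one builds the blowup $D_{X/Y} \to \bA^1$ whose generic fiber is $Y$ and whose special fiber is the total space $\bV_X(\cN_{X/Y})$. Since $\bD$ satisfies homotopy invariance and localization (the Voevodsky conditions), the functor $i_*$ is gluing data in the recollement attached to $Z \overset{i}{\hookrightarrow} X \overset{j}{\hookleftarrow} U$, and one deduces a canonical equivalence between $i_*$ computed on $Y$ and the corresponding zero-section pushforward on $\bV_X(\cN_{X/Y})$. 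Composing with $p_\#$ and unwinding the definition of the Thom twist $\langle\cN_{X/Y}\rangle := p_\# s_* (-)$ yields the asserted isomorphism $q_\# i_* \simeq p_\# \langle \cN_{X/Y}\rangle$. The key inputs are Thom stability (so the twist is invertible, allowing us to move it past $p_\#$) and the admissible base change equivalence $\Ex_\#^*$, used to identify the two sides along the specialization.

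For the second statement, the strategy is to factor an unramified morphism $f \colon X \to Y$ over $S$, locally on $X$, as a closed immersion into a smooth $Y$-scheme followed by a smooth (indeed étale, after shrinking) morphism: by the local structure of unramified morphisms, $f = \pi \circ \iota$ with $\iota \colon X \hookrightarrow Y'$ a closed immersion and $\pi \colon Y' \to Y$ smooth. Then $f^! = \iota^! \pi^!$, and since $\pi$ is smooth we have $\pi^! \simeq \pi^* \langle \Omega_{Y'/Y}\rangle$ by the smooth case of purity (the canonical identification of $f^!$ with a Thom-twisted $f^*$ for smooth $f$, recorded in the six-functors theorem). For the closed immersion $\iota$ one applies absolute purity in the form $\iota^! \simeq \iota^* \langle \cN_{X/Y'}\rangle$, which follows from localization and the deformation-to-the-normal-cone argument above (this is the $f^!$-dual of the first bullet). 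Composing and using the conormal exact triangle $\cN_{X/Y'} \to \iota^* \Omega_{Y'/Y} \to \cL_{X/Y}$ — together with the additivity of Thom twists over exact triangles, which holds because $\langle - \rangle$ is compatible with the symmetric monoidal structure and $\bD$ is stable — collapses the two twists into the single twist $\langle \cL_{X/Y}\rangle$, giving $f^! q^* \simeq p^* \langle \cL_{X/Y}\rangle$ after transporting along $q \colon Y \to S$, $p \colon X \to S$ via finite-type–smooth base change $\Ex^{*!}$.

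The main obstacle I anticipate is verifying that the deformation-to-the-normal-cone argument goes through $\infty$-categorically with all coherences intact: in the triangulated world one gets away with choosing triangles non-canonically, but here one must check that the equivalences $i_* \simeq s_*$-after-specialization are natural and that the resulting identification $q_\# i_* \simeq p_\# \langle \cN_{X/Y}\rangle$ does not secretly depend on a choice of deformation. Concretely, one needs that the two admissible base change equivalences attached to the special and generic fibers of $D_{X/Y} \times_{\bA^1} \{0,1\}$ are compatible, which is where Khan's already-packaged statements (the base-change equivalences in the six-functors theorem and homotopy invariance as an equivalence of functors, not merely objectwise) do the real work. A secondary subtlety is the passage from the local factorization of an unramified morphism to a global statement: one should either assume $f$ globally factors (as in many references) or glue the local purity isomorphisms using their uniqueness, invoking that $\bD$ satisfies Zariski (indeed Nisnevich) descent — a consequence of localization — so that the locally defined equivalences patch to a global one.
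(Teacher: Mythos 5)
The paper itself does not prove this theorem; the ``proof'' is a one-line citation to Khan (Theorems 2.25 and 2.43), so what you have written is a reconstruction of the argument from the primary source rather than something to compare against a proof in the paper. The overall strategy you outline --- deformation to the normal cone for relative purity, then a factorization of the unramified map through a closed immersion followed by an \'etale map, with Nisnevich-descent gluing --- is the standard one and matches the shape of Khan's argument. But two attributions in your sketch are off, and one of them risks circularity.

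You invoke ``the smooth case of purity \ldots recorded in the six-functors theorem'' to identify $\pi^!$ with a Thom-twisted $\pi^*$. Smooth purity is not in Theorem 3.14 (which only gives $f^!\simeq f^*$ for open immersions, with the \'etale case noted separately afterward); it is Theorem 3.22, which in both the paper's ordering and in Khan's development comes logically \emph{after} the present statement and is derived from it via the diagonal factorization. Invoking it here would beg the question. Your parenthetical ``(indeed \'etale, after shrinking)'' is the correct escape hatch: the local structure theorem gives $\pi$ \'etale locally on $X$, only \'etale purity is needed, $Y'$ is automatically $S$-smooth, and the conormal triangle you cite degenerates since $\Omega_{Y'/Y}=0$, so the ``additivity of Thom twists over exact triangles'' is never actually used. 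You should commit to the \'etale factorization rather than hedge toward the smooth one. Second, the identification $\iota^!q'^* \simeq p^*\langle\cL_\iota\rangle$ for the closed immersion $\iota$ is not ``absolute purity'' --- in this paper (Theorems 3.25 and 4.4) and in Cisinski--D\'eglise absolute purity is the much stronger statement that $f^!\mathbf{1}_S$ is a Tate twist of $\mathbf{1}_X$ for lci morphisms over a regular base, and it is a special property of $\SH_\Q$ and $\DM_B$, not of a general motivic $\infty$-category. What is actually used is the adjoint transpose of the first bullet of the very theorem being proven: taking right adjoints in $q'_\#\iota_*\simeq p_\#\langle\cN_{X/Y'}\rangle$ yields $\iota^!q'^*\simeq\langle-\cN_{X/Y'}\rangle p^*$, and $\langle-\cN_{X/Y'}\rangle=\langle\cN_{X/Y'}[1]\rangle=\langle\cL_\iota\rangle$. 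Keeping this deduction internal to the theorem avoids importing a result that is unavailable in the stated generality.
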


\begin{proof}
This is \cite{Khan} theorem 2.25 and 2.43.
\end{proof}

In particular, these can be used to conclude two important results.

\begin{thm}
\emph{Atiyah duality:} If $f:X\to Y$ in $\cS$ is smooth and proper, then one has a canonical morphism of functors 
\[
\epsilon_f:f_\#\langle\cL_f\rangle\to f_*,
\]
where $\cL_f$ is the cotangent complex of $f$.
\end{thm}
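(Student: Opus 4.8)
The plan is to obtain $\epsilon_f$ as a composite of two canonical natural transformations. For a smooth $f$ the cotangent complex $\cL_f=\cL_{X/Y}$ is concentrated in degree zero and equals the locally free sheaf $\Omega_{X/Y}$, so the Thom twist $\langle\cL_f\rangle$ is a well-defined auto-equivalence of $\bD(X)$ by Thom stability. The first transformation is the \emph{relative purity} equivalence for $f$ smooth, identifying the exceptional pushforward $f_!$ with $f_\#$ twisted by a power of $\langle\cL_f\rangle$; the second is the comparison $\alpha_f\colon f_!\to f_*$ from the six functors theorem, which is an equivalence exactly when $f$ is proper. Composing the two gives the asserted $\epsilon_f$, an equivalence in the smooth and proper case.

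The substantive step is the first one. I would first obtain, for every smooth $f$, the relative purity equivalence $f^!\simeq f^*\langle\cL_f\rangle$ (equivalently, after passing to left adjoints, $f_!\simeq f_\#\langle\cL_f\rangle^{-1}$, with $\langle-\rangle$ normalized as in the definition above). The twist is identified using relative purity for closed immersions of smooth schemes applied to the diagonal $\Delta\colon X\hookrightarrow X\times_YX$: here $X\xrightarrow{\mathrm{id}}X$ and $X\times_YX\xrightarrow{\mathrm{pr}_1}X$ are smooth $X$-schemes, $\Delta$ is a closed immersion over $X$ since $f$ (being proper) is separated, and its conormal sheaf is canonically $\Omega_{X/Y}=\cL_f$, so that $(\mathrm{pr}_1)_\#\Delta_*\simeq\langle\cL_f\rangle$. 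Globalizing this to arbitrary smooth morphisms uses the \'etale-local structure of smooth morphisms (\'etale over affine space), homotopy invariance, and the open-immersion and \'etale cases $f^!\simeq f^*$ recorded above; this is exactly the purity package of \cite{Khan} (Theorems~2.25 and~2.43), which we cite.

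Granting the purity equivalence, $\epsilon_f$ is its composite with $\alpha_f\colon f_!\to f_*$. By the second clause of the six functors theorem $\alpha_f$ is an equivalence when $f$ is proper, so for $f$ smooth and proper $\epsilon_f$ is an equivalence of functors $\bD(X)\to\bD(Y)$. Naturality in $f$ — compatibility with composition of smooth proper morphisms and with base change — is inherited from the corresponding compatibilities of the purity equivalences and of $\alpha$, which in our setting should be recorded in the strict $S$-category model used throughout the paper.

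I expect the main obstacle to be twofold. First, there is a bookkeeping point: one must fix the power of $\langle\cL_f\rangle$ and the accompanying shift so that the composite presents in the displayed form rather than twisted by a dual, shifted, or inverted version of $\langle\cL_f\rangle$ (for $f$ of relative dimension $d$ the underlying equivalence is $f_!\simeq f_\#(-d)[-2d]$, and this must be squared with the normalizations of $\langle-\rangle$ and of $\cL_f$ in force). Second, and more seriously, is the passage from purity for closed immersions and unramified morphisms — which is what the cited theorem literally supplies — to purity for all smooth morphisms: the diagonal argument only identifies the relevant twist, and globalizing it genuinely invokes the local structure theory of smooth morphisms together with homotopy invariance. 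That step is the heart of the matter, but it is carried out in \cite{Khan}, so the role of the present proof is to assemble these inputs rather than to reprove them.
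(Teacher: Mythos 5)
The paper's ``proof'' of this statement is a bare citation to \cite{Khan}, Theorem 2.24(iii), so there is no argument in the paper to compare against; your proposal supplies one, and the question is whether it is a valid independent route.

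The main concern is logical order. In Khan's development --- which this section tracks in sequence, citing Theorems 2.24, 2.25, 2.34, 2.43, 2.44 --- the Atiyah-duality transformation $\epsilon_f$ is constructed \emph{before} the exceptional adjunction $f_!\dashv f^!$ exists (Theorem 2.34) and before smooth purity $f^!\simeq f^*\langle\cL_f\rangle$ is proved (Theorem 2.44). Your proposal builds $\epsilon_f$ as the composite of (smooth purity, passed to left adjoints) with $\alpha_f\colon f_!\to f_*$, but both ingredients are downstream of 2.24(iii): the ambidexterity of $f_\#$ (up to twist) and $f_*$ for smooth proper $f$ is used to see that the local definitions of $f_!$ (as $f_*$ on proper maps, as a twisted $f_\#$ on smooth maps) glue coherently, and the diagonal argument you sketch is precisely the argument that proves this ambidexterity. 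So as an account of why the theorem is true in the package as a whole your identification is correct --- once the six operations are in hand, $\epsilon_f$ \emph{is} the composite you describe --- but as a proof in the order the paper adopts it is circular. The step you single out as ``the heart of the matter,'' globalizing relative purity from closed immersions and unramified maps to all smooth maps, is the same diagonal computation that yields Atiyah duality directly; you are not bypassing the substantive content, only relocating it to the cited purity theorem.

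Your sign remark is well taken and worth amplifying. With $\langle\cE\rangle$ normalized as in the paper's definition (so $\langle\cE\rangle\simeq(r)[2r]$ for a rank-$r$ bundle) and smooth purity $f^!\simeq\langle\cL_f\rangle\circ f^*$, passing to left adjoints gives $f_!\simeq f_\#\circ\langle\cL_f\rangle^{-1}$, i.e.\ $f_!\simeq f_\#(-d)[-2d]$ for $f$ of relative dimension $d$, exactly as you write. Composing with $\alpha_f$ therefore produces a map $f_\#\langle\cL_f\rangle^{-1}\to f_*$, not $f_\#\langle\cL_f\rangle\to f_*$. A quick check on $f\colon\bP^1\to\Spec k$ confirms this: $f_\#\mathbf1\langle\cL_f\rangle\simeq M(\bP^1)(1)[2]$ whereas $f_*\mathbf1\simeq M(\bP^1)^{\vee}\simeq M(\bP^1)(-1)[-2]$, which match only after inverting the twist. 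So the statement as displayed should read $f_\#\langle-\cL_f\rangle\to f_*$ (equivalently $f_\#\to f_*\langle\cL_f\rangle$), or the twist convention is being used with the opposite sign from the one the paper sets up; either way, you are right to flag the discrepancy rather than absorb it silently.
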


\begin{proof}
This is \cite{Khan} theorem 2.24 (iii).
\end{proof}
This theorem is very interesting, as it implies, among other things, that the left and right adjoints of $f^*$ are related by a Thom twist when $f$ is smooth and proper.

\begin{thm}
\emph{Purity:} If $\bD$ is a motivic category over $\cS$, then for any smooth morphism $f:X\to Y$, one has a canonical equivalence
\[
\text{pur}_f:f^!\to f^*\langle\cL_f\rangle
\]
of functors $\bD(Y)\to\bD(X)$, thus generalizing our previous result on \'etale morphisms from before.
\end{thm}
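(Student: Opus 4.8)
The plan is to deduce Purity for arbitrary smooth $f$ from the two special cases already available — Atiyah duality (Theorem above) for smooth proper maps, and the \'etale equivalence $f^!\simeq f^*$ — by a factorization-and-gluing argument, exactly as in Ayoub's and Cisinski--D\'eglise's treatment of the purity transformation, now transported to the $\infty$-categorical setting. First I would construct the candidate natural transformation $\mathrm{pur}_f : f^! \to f^*\langle \cL_f\rangle$. For a general smooth $f:X\to Y$ one has a canonical factorization, Zariski-locally on $X$, as a closed immersion $i:X\hookrightarrow \bP^n_Y$ (or into a vector bundle / affine bundle over $Y$) followed by a smooth proper projection $p:\bP^n_Y\to Y$; by relative purity (Theorem above), $i$ contributes a Thom twist by the conormal sheaf $\cN_{X/\bP^n_Y}$, and Atiyah duality handles $p$. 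Composing these, together with the additivity of cotangent complexes along the exact triangle $i^*\cL_p \to \cL_{pi} \to \cL_i$ (which matches the composite Thom twists since Thom twists are additive in short exact sequences of bundles), yields the desired map $\mathrm{pur}_f$.

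Second, I would check this is independent of the chosen factorization and hence glues to a global natural transformation. This is the standard but slightly delicate bookkeeping step: given two factorizations one compares them through a common refinement (e.g.\ the diagonal embedding into the product of the two ambient spaces), and invokes the compatibilities of relative purity and Atiyah duality with composition that are packaged in Khan's Theorems 2.25, 2.43 and 2.24. In the $\infty$-categorical framework one should really produce a coherent system of such identifications, but since all the $\infty$-categories in sight are obtained from the six-functor formalism and the statement is the existence of an \emph{equivalence} of functors (not extra structure), it suffices to exhibit the transformation on each chart and check it agrees on overlaps up to coherent homotopy; Zariski descent for $\bD$ (a consequence of Localization, applied inductively to an open cover) lets these local equivalences be glued.

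Third, having the map, I would verify it is an equivalence, and here I can reduce to the already-known cases: the question is local on $X$, so pick a factorization $f = p\circ i$ as above; $\mathrm{pur}_i$ is an equivalence by relative purity (it is literally the content of $q_\# i_* \simeq p_\#\langle\cN_{X/Y}\rangle$ after adjunction), and $\mathrm{pur}_p$ is an equivalence because Atiyah duality's $\epsilon_p : p_\#\langle\cL_p\rangle \to p_*$ is an equivalence for $p$ smooth proper (so $p^! \simeq p^*\langle\cL_p\rangle$ by passing to right adjoints), using Thom stability to invert the twist. The composite of equivalences is an equivalence. Finally, the claim that this generalizes the \'etale case is the observation that when $f$ is \'etale, $\cL_f \simeq 0$, so $\langle\cL_f\rangle \simeq \id$ and $\mathrm{pur}_f$ recovers the stated equivalence $f^! \simeq f^*$.

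The main obstacle is the second step: making the local construction of $\mathrm{pur}_f$ canonical and coherent in the $\infty$-categorical setting, i.e.\ promoting the choice-dependent local formulas to a well-defined natural transformation of functors. Classically (triangulated case) one is content with compatible isomorphisms; here one must either work harder to assemble the coherences, or — as is implicit in the paper's stance of importing Khan's results wholesale — simply cite that Khan has already organized the purity transformation this way (the relevant statement being the $\infty$-categorical upgrade of Cisinski--D\'eglise purity, which Khan establishes alongside Theorems 2.24, 2.25, 2.43). Everything else is a formal manipulation of adjunctions, base change, and the additivity of Thom twists.
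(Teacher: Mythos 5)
The paper's proof of this theorem is literally the one sentence ``This is \cite{Khan} theorem 2.44'' and nothing more, so there is no in-paper argument to compare yours against; you correctly anticipate this in your final paragraph. Your sketch is a reasonable reconstruction of what actually goes into Khan's proof (and its predecessors in Ayoub and Cisinski--D\'eglise): locally factor a smooth $f$ as a closed immersion into a smooth proper $Y$-scheme followed by the projection, feed the closed immersion to relative purity and the projection to Atiyah duality, check independence of the factorization by passing through a common refinement, and glue by Zariski/Nisnevich descent. But notice that the step you yourself flag as genuinely delicate --- producing the coherent system of identifications needed to assemble a well-defined natural transformation of $\infty$-functors rather than a pointwise collection of isomorphisms --- is precisely the step your proposal ultimately defers to Khan. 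In effect your proof is also a citation to Khan, only with considerably more surrounding exposition. That exposition is a net gain relative to what the paper supplies, but it is not an independent argument, and any claim that the coherence issue ``suffices'' because ``the statement is the existence of an equivalence, not extra structure'' should be treated with caution: to even write down $\mathrm{pur}_f$ as a transformation of $\infty$-functors one already needs the coherences, not merely their shadow in the homotopy category.
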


\begin{proof}
This is \cite{Khan} theorem 2.44.
\end{proof}

We will also make very heavy use of the following consequence of localization.

\begin{prop}
Let $\bD$ be a premotivic category on $\cS$, and suppose that one has a closed immersion $i$ with open complement $j$ of the form
\[
Z\overset{i}{\hookrightarrow}X\overset{j}{\hookleftarrow}U.
\]
Then the functors $j_*$ and $j_\#$ are fully faithful and furthermore one has $j^*i_*\simeq0$ and $i^*j_\#\simeq0$.  If, furthermore $\bD$ satisfies the localization property, then one has the canonical cofiber sequences
\[
j_\#j^*\to\id\to i_*i^*\text{ and }i_*i^!\to\id\to j_*j^*
\]
(there is a way to define $i^!$ in this case without assuming that $\bD$ is motivic, simply as the homotopy fiber of $i^*\to i^*j_*j^*$).  If, in fact, $\bD$ happens to be motivic, then the above cofiber sequences are equivalent to
\[
j_!j^!\to\id\to i_*i^*\text{ and }i_!i^!\to\id\to j_*j^*.
\]
\end{prop}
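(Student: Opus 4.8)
The plan is to derive the whole statement formally from the premotivic axioms — principally admissible $\#$-base change and additivity — together with the localization property where it is explicitly invoked; the one useful device is the passage to right adjoints inside a base-change equivalence. First I would treat the assertions not needing localization. Since an open immersion $j\colon U\hookrightarrow X$ is admissible, admissible $\#$-base change applied to the cartesian square all of whose maps are $j$ or $\id$ gives $j^*j_\#\simeq\id$, so the unit of $j_\#\dashv j^*$ is an equivalence and $j_\#$ is fully faithful. Moreover, for any cartesian square whose vertical legs are admissible, taking right adjoints of the $\#$-base change equivalence $q_\#g^*\xrightarrow{\sim}f^*p_\#$ yields a ``$*$-base change'' equivalence $g_*q^*\xrightarrow{\sim}p^*f_*$ for that square; applied to the square above it gives $j^*j_*\simeq\id$, so $j_*$ is fully faithful. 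For the vanishings, $Z$ and $U$ are complementary, so $Z\times_XU=\emptyset$, and $\emptyset\to Z$, $\emptyset\to U$ are (empty) open immersions, hence admissible; then $\#$-base change along $j$ gives $i^*j_\#\simeq(\emptyset\to Z)_\#(\emptyset\to U)^*$ and the derived $*$-base change along $j$ gives $j^*i_*\simeq(\emptyset\to U)_*(\emptyset\to Z)^*$, both factoring through $\bD(\emptyset)$, which by additivity for the empty family is the zero $\infty$-category; hence $i^*j_\#\simeq0$ and $j^*i_*\simeq0$.

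Assuming localization now (so $i_*$ is fully faithful, $i^*i_*\simeq\id$, with essential image the kernel of $j^*$), I would build the cofiber sequence $j_\#j^*\to\id\to i_*i^*$ as follows. The composite
\[
j_\#j^*\xrightarrow{\epsilon_j^\#}\id\xrightarrow{\eta_i^*}i_*i^*
\]
is classified under $i^*\dashv i_*$ by $i^*\epsilon_j^\#\colon i^*j_\#j^*\to i^*$, which vanishes since $i^*j_\#\simeq0$; hence the composite is zero and $\id\to i_*i^*$ factors through $C:=\mathrm{cofib}(\epsilon_j^\#)$. Applying $j^*$ to the cofiber sequence defining $C$ and using that $j^*\epsilon_j^\#$ is an equivalence (triangle identity together with full faithfulness of $j_\#$) gives $j^*C\simeq0$, so by localization $C$ lies in the essential image of $i_*$, i.e. $C\simeq i_*i^*C$. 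Applying instead $i^*$ and using $i^*j_\#\simeq0$ shows $i^*(\id\to C)$ is an equivalence $i^*\xrightarrow{\sim}i^*C$; as $\eta_i^*$ also becomes an equivalence after $i^*$ and $i_*$ is fully faithful, the comparison map $C\to i_*i^*$ compatible with the maps out of $\id$ is an equivalence, so $j_\#j^*\to\id\to i_*i^*$ is a cofiber sequence.

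The second sequence is dual: put $F:=\mathrm{fib}(\eta_j^*\colon\id\to j_*j^*)$; applying $j^*$ and using $j^*j_*\simeq\id$ with the triangle identity gives $j^*F\simeq0$, so $F$ lies in the essential image of $i_*$, and setting $i^!:=i^*F$ we get $F\simeq i_*i^!$, hence a fiber sequence $i_*i^!\to\id\to j_*j^*$; since $i^*$ is exact, this $i^!$ is the homotopy fiber of $i^*\to i^*j_*j^*$, matching the statement. Finally, when $\bD$ is motivic, the exceptional-operations theorem (\cite{Khan}) gives $j_!\simeq j_\#$, $j^!\simeq j^*$ for the open immersion $j$ and $i_!\simeq i_*$ for the closed (hence proper) immersion $i$; substituting rewrites the sequences as $j_!j^!\to\id\to i_*i^*$ and $i_!i^!\to\id\to j_*j^*$, the ad hoc $i^!$ being identified with the genuine right adjoint of $i_!=i_*$ because both sit in a cofiber sequence of shape $i_*(-)\to\id\to j_*j^*$, which is determined up to canonical equivalence.

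I expect the main obstacle to be the one place where localization does real work: upgrading ``$C$ lies in $\ker j^*$'' to ``$C\simeq i_*i^*$'' via a \emph{canonical} map, and checking that this map is an equivalence — one verifies it after applying $i^*$, taking care because the essential image of $i_*$ is only known abstractly. Everything else is a routine unwinding of adjunctions, triangle identities, and base change.
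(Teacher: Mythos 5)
The paper does not argue this proposition itself but simply cites Khan, Remark 2.9; your proof supplies the details that citation elides, and it is essentially correct. The degenerate admissible-base-change square gives $j^*j_\#\simeq\id$, passage to right adjoints (mates) gives $j^*j_*\simeq\id$, and the cartesian square with $\emptyset=Z\times_XU$ together with additivity for the empty family (so $\bD(\emptyset)\simeq 0$) gives the two vanishings; the construction and identification of the two cofiber sequences under localization are handled correctly. The one place to tighten is the final identification: you assert that the genuine $i^!$ (the right adjoint to $i_!$ available once $\bD$ is motivic) sits in a cofiber sequence of shape $i_*(-)\to\id\to j_*j^*$, but that is part of what one is proving, so as written the argument is mildly circular. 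A cleaner finish needs only localization: your ad hoc $i^!:=i^*F$, with $F$ the fiber of $\eta_j^*:\id\to j_*j^*$, is \emph{already} right adjoint to $i_*$. Indeed, for all $A$ and $C$,
\[
\Hom(A,i^*FC)\;\simeq\;\Hom(i_*A,\,i_*i^*FC)\;\simeq\;\Hom(i_*A,FC)\;\simeq\;\Hom(i_*A,C),
\]
using in turn full faithfulness of $i_*$, the fact that $FC$ lies in the essential image of $i_*$ so that the counit $i_*i^*FC\to FC$ is invertible, and that $\Hom(i_*A,j_*j^*C)\simeq\Hom(j^*i_*A,j^*C)\simeq\ast$ since $j^*i_*\simeq 0$. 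Once $\bD$ is motivic and $i_!\simeq i_*$, uniqueness of adjoints identifies this ad hoc $i^!$ with the genuine one with no circularity.
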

\begin{proof}
This is \cite{Khan} remark 2.9
\end{proof}

\bigskip

We now come to a very important definition.  The subcategory of constructible objects will be what allows us to extract meaningful K-theory from our six functors formalism and not fall prey to swindles. 

\begin{defn}
Consider a motivic category $\bD$ over $\cS$ and a scheme $S\in\cS$.  For any $\cF\in\bD(S)$, we say that $\cF$ is \emph{constructible} if it satisfies one of the following equivalent conditions:
\begin{itemize}
    \item $\cF$ lies in the thick subcategory generated by $f_\#f^*(\mathbf{1}_S)\langle-n\rangle$ (or $f_!f^!(\mathbf{1}_S)\langle-n\rangle$ via purity) with $f:X\to S$ a smooth morphism of finite presentation and $n\in\Z_{\ge0}$
    \item $\cF$ lies in the thick subcategory generated by $f_\#f^*(\mathbf{1}_S)\langle-n\rangle$ (or  $f_!f^!(\mathbf{1}_S)\langle-n\rangle$ via purity) with $f:X\to S$ a smooth morphism of finite presentation with $X$ affine and $n\in\Z_{\ge0}$
\end{itemize}
The \emph{subcategory of constructible objects over $S$} is denoted by $\bD_\cons(S)\subseteq\bD(S)$.
\end{defn}

Since it is this subcategory of constructible objects that we will need to extract meaningful K-theory in what follows, we will list here several of its important properties.

\begin{defn}
We say that a motivic category $\bD$ is constructibly generated if it is compactly generated and every constructible object is compact.  It then follows that the compactness is equivalent to constructibility (\cite{Khan} definition 2.5.5). 
\end{defn}

\begin{prop}
The property of constructibility in $\bD$ is stable under
\begin{itemize}
    \item Tensor product with any constructible object
    \item Inverse image along any morphism in $\cS$
    \item $\#$-direct image along any finitely presented smooth morphism in $\cS$
    \item Thom twist by a perfect complex
    \item Exceptional direct image along any finite type morphism in $\cS$
\end{itemize}
\end{prop}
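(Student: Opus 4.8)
The plan is to verify each of the five stability properties by reducing to the generators of $\bD_\cons(S)$ and invoking the six-functors calculus established above. Recall that $\bD_\cons(S)$ is by definition the thick subcategory generated by objects of the form $f_\#f^*(\mathbf{1}_S)\langle -n\rangle$ with $f$ smooth of finite presentation; since thick subcategories are closed under finite colimits, retracts, and shifts, it suffices in each case to check the relevant operation either sends generators into $\bD_\cons$ or commutes with the passage to thick subcategories. First I would handle inverse image: given $g:T\to S$ in $\cS$, one has $g^*f_\#f^*(\mathbf{1}_S)\langle -n\rangle\simeq f'_\# f'^*(\mathbf{1}_T)\langle -n\rangle$ by admissible (smooth) base change $\Ex^*_\#$ applied to the cartesian square defining the pullback $f':X\times_S T\to T$, together with $g^*\mathbf 1_S\simeq\mathbf 1_T$ (monoidality of $g^*$) and the compatibility of Thom twists with pullback noted in the remark preceding the purity statements. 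Since $g^*$ is exact, it carries the thick subcategory generated by these objects into $\bD_\cons(T)$.

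Next, for $\#$-direct image along a finitely presented smooth morphism $p:S'\to S$: here $p_\#$ applied to a generator $f_\# f^*(\mathbf 1_{S'})\langle -n\rangle$ of $\bD_\cons(S')$ gives $(p\circ f)_\#(p\circ f)^*(\mathbf 1_S)\langle -n\rangle$ after unwinding $f^*\mathbf 1_{S'}\simeq (p\circ f)^*\mathbf 1_S$ and composing the left adjoints $p_\# f_\#\simeq (pf)_\#$; since $p\circ f$ is again smooth of finite presentation, this is a generator of $\bD_\cons(S)$, and exactness of $p_\#$ does the rest. For Thom twist by a perfect complex $\cE$, the key point is that the generators of $\bD_\cons(S)$ are closed under such twists because $\langle\cE\rangle$ is an equivalence (Thom stability) commuting with $f_\#$ and $f^*$ up to twisting the perfect complex on the source, and twists by perfect complexes are themselves built from line-bundle twists by finite colimits and retracts — so one reduces to line bundles and uses that a line-bundle Thom twist sends $f_\#f^*(\mathbf 1_S)\langle -n\rangle$ to another object of the same shape. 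Tensor with a constructible object $\cG$ follows from the projection formula for $f_\#$: $\cG\otimes f_\#f^*(\mathbf 1_S)\langle -n\rangle\simeq f_\#(f^*\cG)\langle -n\rangle$, and $f^*\cG$ is constructible on $X$ by the inverse-image case already proven, so this lies in the image of $f_\#$ applied to $\bD_\cons(X)$, which we have just shown lands in $\bD_\cons(S)$; bilinearity of $\otimes$ and exactness in each variable extend this from generators to all of $\bD_\cons(S)$.

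The last and most delicate item is stability under exceptional direct image $f_!$ along an arbitrary finite type morphism $f:X\to S$. The plan is to factor $f$ (Nagata-style, using that $\cS$ is Noetherian) as $j$ followed by $p$ with $j$ an open immersion and $p$ proper, so $f_!\simeq p_*j_\#$ using the canonical equivalences $j_!\simeq j_\#$ and $p_!\simeq p_*$ from the exceptional-operations theorem. Stability under $j_\#$ is a special case of the $\#$-direct image item. The genuinely hard part is stability under $p_*$ for $p$ proper: $p_*$ is not a left adjoint and does not obviously commute with the generators, so one cannot argue by naive reduction. The standard route — which I would follow — is to use localization and Noetherian induction on the dimension of the base (or on a stratification of $X$), together with the continuity/compact-generation hypotheses, to reduce the proper case to the projective case via Chow's lemma, and then handle projective $p$ by dévissage to projective space bundles, where $p_*$ of the generators can be computed explicitly using the projective bundle formula and relative purity. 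This is exactly the argument of Khan (and of Cisinski–Déglise in the triangulated setting), and I would cite it rather than reproduce it; the conditions imposed on $\bD$ in the hypotheses of the earlier constructibility theorems are precisely what make this induction go through.
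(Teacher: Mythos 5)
The paper's own proof of this proposition consists entirely of citations to Khan's paper (Proposition 2.57 for the first three items, Proposition 2.60 for the Thom twist, Theorem 2.61 for the exceptional direct image), so strictly speaking there is no argument in this paper to compare against. Your proposal is a sketch of what those cited results actually prove, and it is accurate in substance: the reductions to the generators $f_\# f^*(\mathbf 1_S)\langle -n\rangle$ via smooth base change for $g^*$, composition of left adjoints for $p_\#$, and the projection formula for $\otimes$ are exactly the right mechanisms, and you correctly identify stability under $f_!$ as the genuinely nontrivial item, requiring Nagata compactification, Chow's lemma, and Noetherian induction (with the proper case after dévissage to projective space), which is indeed how Khan and Cisinski--D\'eglise handle it. One small caveat: your justification for Thom twists by perfect complexes (``twists by perfect complexes are built from line-bundle twists by finite colimits and retracts'') is not quite right as stated; the Thom twist $\langle\cE\rangle$ depends only on the class of $\cE$ in $K$-theory, and the reduction to (Zariski-local) filtrations by line bundles is an argument in $K_0$, not a decomposition of the twist functor by colimits and retracts. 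The conclusion you draw is still correct and matches Khan's result, but the mechanism is slightly different from what you describe. In all, you have faithfully reconstructed the cited arguments rather than found a different route, and your deference to Khan for the hardest step is precisely what the paper itself does.
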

\begin{proof}
The first three statements comprise \cite{Khan} proposition 2.57, the third comprises \cite{Khan} proposition 2.60, and the fourth comprises \cite{Khan} theorem 2.61.
\end{proof}

This implies that $\bD^*_\cons$ is a presheaf of symmetric monoidal essentially small $\infty$-categories on $\cS$.

\begin{cor}
If $i:Z\hookrightarrow X$ is a closed immersion, the property of constructibility in $\bD(X)$ is stable under the endofunctor $i_*i^*$.
\end{cor}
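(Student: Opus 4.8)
The plan is to deduce this directly from the preceding proposition, using the identification of $i_*$ with the exceptional direct image $i_!$ for a closed immersion.

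First I would observe that $i^*$ preserves constructibility: this is immediate from the second bullet of the proposition (stability under inverse image along any morphism in $\cS$), since $i$ is a morphism in $\cS$. Hence for any constructible $\cF\in\bD(X)$ the object $i^*\cF$ lies in $\bD_\cons(Z)$. Next, since $i$ is a closed immersion it is proper, so by the theorem on the exceptional operations the natural transformation $\alpha_i:i_!\to i_*$ is an equivalence, giving $i_*i^*\cF\simeq i_!i^*\cF$. Moreover a closed immersion between Noetherian schemes is of finite type, so $i$ is a finite type morphism in $\cS$, and the last bullet of the proposition (stability under exceptional direct image along finite type morphisms) shows $i_!i^*\cF\in\bD_\cons(X)$. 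Combining the two steps yields that $i_*i^*$ carries $\bD_\cons(X)$ into itself.

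Alternatively, and without invoking $i_!\simeq i_*$, one may argue via the localization cofiber sequence $j_!j^!\to\id\to i_*i^*$ of the proposition above, where $j:U\hookrightarrow X$ is the open complement of $i$: since $j^!\simeq j^*$ for the open immersion $j$, and $j$ is of finite type (as $U$ is a quasi-compact open subscheme of a Noetherian scheme), $j_!j^!\cF$ is constructible; then $i_*i^*\cF$ is constructible because the constructible objects form a thick — in particular cofiber-closed — subcategory of $\bD(X)$.

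There is no genuine obstacle here, as the statement is a short corollary of the stability properties already established; the only point requiring care is verifying that the relevant morphisms ($i$, and $j$ in the second argument) are of finite type, which holds automatically in the Noetherian setting fixed for $\cS$.
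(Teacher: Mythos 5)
Your proof is correct. The paper supplies no argument of its own here — it simply cites Khan's Corollary 2.58 — so there is nothing to compare against at the level of technique; but your first argument (stability of constructibility under $i^*$ and under $i_!$ for finite type morphisms, together with $i_*\simeq i_!$ since a closed immersion is proper) is precisely the direct deduction from the preceding proposition, and the alternative via the localization cofiber sequence $j_!j^!\to\id\to i_*i^*$ together with thickness of $\bD_\cons(X)$ is a clean variant that avoids invoking $\alpha_i$.
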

\begin{proof}
This is \cite{Khan} corollary 2.58.
\end{proof}

\subsection{Some Examples of Motivic $\infty$-Categories}

Perhaps the canonical example of a motivic $\infty$-category over $\cS$ is the stable motivic homotopy category $\SH$.  We will go through the definition here.  We will fix a subcategory $\cS$ of schemes over some base $B$ and an admissible subcategory $\cA$ of $\Sch_B$ for the rest of this section.  We require that $\cA$ satisfy the following properties:
\begin{itemize}
    \item $\cA$ is an essentially small full subcategory of $\Sch_B$
    \item For every $S\in\cS$, one has that the over category $\cA_{/S}$ contains $S$
    \item If $X\in\cA_{/S}$ and $Y$ is \'etale and of finite presentation over $S$, then $Y\in\cA_{/S}$
    \item If $X\in\cA_{/S}$, then $X\times\bA^1\in\cA_{/S}$
    \item If $X, Y\in\cA_{/S}$, then $X\times_SY\in\cA_{/S}$
\end{itemize}

Note that we do not necessarily require $\cA\subseteq\cS$ in this section (in fact, our typical choice for $\cA$ will merely be $\Sm_B$).  

We construct the stable motivic homotopy category in several steps.  The first order of business is to construct the \emph{unstable motivic homotopy category} $\textbf H(\cA_{/S})$ for any $S\in\cS$. 

\begin{defn}
Given any $S\in\cS$, an \emph{$\cA$-fibered space over $S$} is a presheaf on $\cA_{/S}$ with values in a suitable category of spaces $\textbf S$, such as Kan complexes.  We denote the $\infty$-category of such presheaves as $\PrSh_\textbf S(\cA_{/S})$.  
\begin{itemize}
    \item An element $\cF\in\PrSh_\textbf S(\cA_{/S})$ \emph{satisfies Nisnevich descent} if it satisfies \u Cech descent with respect to the restriction of the Nisnevich topology restricted to $\cA_{/S}$.  In other words, for all $X\in\cA_{/S}$ (suppressing structure morphism) and all Nisnevich coverings $\{U_\alpha\to X\}_\alpha$, the diagram  
    \[
    \cF(X)\to\prod_\alpha\cF(U_\alpha)\rightrightarrows\prod_{\beta,\gamma}\cF(U_\beta\times_XU_\gamma)\triplerightarrow\cdots
    \]
    exibits $\cF(X)$ as its homotopy limit
    \item An element $\cF\in\PrSh_\textbf S(\cA_{/S})$ \emph{is $\bA^1$-homotopy invariant} if for any $X\in\cA_{/S}$, the natural map
    \[
    \cF(X)\to\cF(X\times_S\bA^1)
    \]
    is an equivalence
\end{itemize}
An element $\cF\in\PrSh_\textbf S(\cA_{/S})$ is referred to as \emph{motivic} if it satisfies Nisnevich descent and $\bA^1$-homotopy invariance.  The category of motivic $\cA$-fibered spaces is referred to as the \emph{unstable motivic homotopy category over S with respect to $\cA$} and is denoted $\textbf{H}(\cA_S)$.  It may equally be seen as the successive left Bousfeld localization of $\PrSh_\textbf S(\cA_{/S})$ at the classes of morphisms $\cF(X)\to\text{Tot}(\{U_\alpha\}, \cF)$ for any $\cF\in\PrSh_\textbf S(\cA_{/S})$, any $X\in\cA_{/S}$, and any Nisnevich cover $\{U_\alpha\to X\}_\alpha$ (here $\text{Tot}(\{U_\alpha\}, \cF)$ is the totalization of the \u Cech complex of $\cF$ at our Nisnevich cover) and $\cF(X)\to\cF(X\times_S\bA^1)$ for any $\cF\in\PrSh_\textbf S(\cA_{/S})$ and any $X\in\cA_{/S}$.  This classification is often more useful when working with $\textbf H(\cA_{/S})$.
\end{defn}

We will let $\textbf H(S)$ denote the above category when $\cA_{/S}=\Sm_S$, or in other words $\cA=\Sm_B$ for our base scheme $B$.

It should be noted that $\textbf H(\cA_{/S})$ is symmetric monoidal via the cartesian product.

Now, we are almost at our desired definition.  In particular, we let $\textbf H_\bullet(\cA_{/S})$ refer to the pointed objects in $\textbf H(\cA_{/S})$.  The corresponding symmetric monoidal structure on $\textbf H_\bullet(\cA_{/S})$ is induced by $\wedge$.  The forgetful functor $\textbf H_\bullet(\cA_{/S})\to\textbf H(\cA_{/S})$ has a symmetric monoidal left adjoint given by taking a free disjoint baseboint $\cF\mapsto\cF_+$.

\begin{defn}
Given a vector bundle $\cE$ on $S$ with total space $E\to S$, we define the \emph{Thom space} $\text{Th}_S(\cE)$ to be the cofiber of the inclusion $E\setminus S\hookrightarrow E$.  Note that this is naturally an element of $\textbf H_\bullet(\cA_{/S})$ and that it must be compact.
\end{defn}

Denoting $\bT_S:=\text{Th}_S(\cO_S)=\bA^1/(\bA^1-S)$, we obtain suspension endofunctor
\[
\Sigma_\bT:=\bT_S\wedge(-):\textbf H_\bullet(\cA_{/S})\to\textbf H_\bullet(\cA_{/S}).
\]
This has a right adjoint loop space functor
\[
\Omega_\bT:\textbf H_\bullet(\cA_{/S})\to\textbf H_\bullet(\cA_{/S}).
\]
Now, we are ready to introduce our main definition.

\begin{defn}
Given $S\in\cS$, we may define the \emph{motivic stable homotopy category over $S$ relative to $\cA$}, denoted $\SH(\cA_{/S})$, to be the cofiltered homotopy colimit of
\[
\textbf H_\bullet(\cA_{/S})\overset{\Sigma_\bT}{\to}\textbf H_\bullet(\cA_{/S})\overset{\Sigma_\bT}{\to}\cdots
\]
taken in the $(\infty, 2)$-category of presentable $\infty$-categories with left-adjoint functors or equivalently as the filtered homotopy colimit of
\[
\cdots\overset{\Omega_\bT}{\to}\textbf H_\bullet(\cA_{/S})\overset{\Omega_\bT}{\to}\textbf H_\bullet(\cA_{/S})
\]
in either the $(\infty, 2)$-category of presentable $\infty$-categories with right adjoint functors, or just in the $(\infty, 2)$-category of $\infty$-categories.  An element of $\SH(\cA_{/S})$ will be referred to as an \emph{$\cA$-fibered spectrum over $S$}.
\end{defn}

This motivic stable homotopy category, originally defined by Voevodsky and Morel, has many nice properties, among them that it satisfies a six functors formalism.  When $\cA$ is not specified, as above, we assume that we are dealing with the subcategory of all smooth morphisms in $\cS$, and merely use the notation $\SH(S)$, referring to it as \emph{the motivic stable homotopy category over $S$}.

It should be noted that $\SH(\cA_{/S})$ has a natural symmetric monoidal structure, denoted $\otimes$, with the unit denoted $\mathbf1_S$.

\begin{thm}
The motivic stable homotopy category defines a presheaf of stable $\infty$-categories $\SH^*:\cS^{op}\to\Cat_\infty^{stab}$.  This presheaf naturally descends to the structure of a motivic $\infty$-category.
\end{thm}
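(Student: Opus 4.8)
The plan is to split the statement into its two halves: first that $\SH^*$ is a functor $\cS^{op}\to\Cat_\infty^{stab}$, and then that the resulting presheaf of stable $\infty$-categories carries the structure of a $(*,\#,\otimes)$-formalism on $(\cS,\Sm)$ satisfying the Voevodsky conditions. For functoriality, observe that a morphism $f:T\to S$ in $\cS$ induces the base-change functor $\Sm_S\to\Sm_T$, $X\mapsto X\times_S T$, hence a colimit-preserving symmetric monoidal pullback on presheaf $\infty$-categories $\PrSh_{\textbf{S}}(\Sm_S)\to\PrSh_{\textbf{S}}(\Sm_T)$; one checks it carries Nisnevich covers to Nisnevich covers and $\bA^1$ to $\bA^1$, so it descends to the motivic localizations $\textbf H_\bullet(S)\to\textbf H_\bullet(T)$, and since the Tate object pulls back to the Tate object it commutes up to canonical equivalence with $\Sigma_\bT$. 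Passing to the colimit defining $\SH$ then yields $f^*:\SH(S)\to\SH(T)$, with composition and unit constraints assembled levelwise. Stability of each $\SH(S)$ follows from the standard decomposition $\bT_S\simeq S^1\wedge\bG_{m,S}$ in $\textbf H_\bullet(S)$: inverting $\Sigma_\bT$ in the colimit forces the simplicial suspension $S^1\wedge(-)$ to become invertible, which is exactly the condition that a presentable $\infty$-category be stable.

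For the $(*,\#,\otimes)$-structure, the symmetric monoidal product $\otimes$ on $\SH(S)$ is obtained by forming the defining colimit in the $(\infty,2)$-category of presentable symmetric monoidal $\infty$-categories with the smash product $\wedge$, which is compatible with the underlying colimit presenting $\SH(S)$ as a plain $\infty$-category, so the unit is the image of the sphere and the $\SH(S)$ are automatically stable-symmetric-monoidal presentable. The $\#$-direct image along a smooth $f:T\to S$ is induced by the composition functor $\Sm_T\to\Sm_S$, $X\mapsto(X\to T\to S)$, which is left adjoint to $X\mapsto X\times_S T$; after checking compatibility with the localizations and with $\Sigma_\bT$ it induces a left adjoint $f_\#$ to $f^*$. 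The projection formula for $f_\#$, admissible base change, and additivity (the latter since $\Sm_{\amalg_\alpha S_\alpha}\simeq\prod_\alpha\Sm_{S_\alpha}$) then all follow formally from these universal-property descriptions, transported through the localizations and the colimit.

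It remains to verify the Voevodsky conditions. Homotopy invariance is essentially built in: for a vector bundle $p:E\to S$ the unit $\id\to p_*p^*$ is an equivalence at the level of motivic spaces precisely because we have $\bA^1$-localized, and $\bT$-stabilization does not disturb this. Thom stability reduces, via Nisnevich descent, to the case of a trivial bundle, where the Thom twist is a power of $\Sigma_\bT$ and hence invertible by construction, after which one bootstraps to an arbitrary bundle using descent along a trivializing cover. The substantive input is the \emph{localization} property: for $Z\overset{i}{\hookrightarrow}X\overset{j}{\hookleftarrow}U$, the functor $i_*$ should be fully faithful with essential image the kernel of $j^*$. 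This is not formal---it is the Morel--Voevodsky localization theorem for $\SH$---and I expect it to be the main obstacle. The standard route is to show that $(j^*,i^*)$ is jointly conservative and that the canonical cofiber sequence $j_\#j^*\to\id\to i_*i^*$ holds, which one verifies using the Nisnevich- and $\bA^1$-local model together with the deformation-to-the-normal-cone argument; once localization is established, the exceptional operations $(f_!,f^!)$ and their base-change and projection-formula properties follow as in the general development recalled earlier, and the passage to constructible objects is unaffected. A final point of care is that all the colimits above must be taken in the appropriate $(\infty,2)$-categorical context so that the monoidal, presentable, and adjointability data survive; this is handled by the standard comparison of colimits of presentable $\infty$-categories along left adjoints with limits along the corresponding right adjoints, and the full argument at this level of generality is carried out in \cite{Khan}.
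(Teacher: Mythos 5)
Your sketch is correct: the paper's own proof is simply the citation ``This is essentially the entire first half of \cite{Khan},'' and your proposal is an accurate outline of the construction there --- functoriality via base change on smooth sites, stability from $\bT\simeq S^1\wedge\bG_m$, $f_\#$ from the forgetful functor $\Sm_T\to\Sm_S$, homotopy invariance and Thom stability as near-formal consequences of the $\bA^1$-localization and $\bT$-inversion, and localization as the genuinely nonformal Morel--Voevodsky input. Your honest flagging of localization as the substantive obstacle and your deferral of the $(\infty,2)$-categorical bookkeeping to Khan matches exactly what the paper does, just with the outline spelled out rather than suppressed.
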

\begin{proof}
This is essentially the entire first half of \cite{Khan} .
\end{proof}

For a deeper dive into the understanding of the $\infty$-categorical structure of the stable motivic homotopy category, the reader is encouraged to consult Marc Hoyois' phenomenal paper \cite{Hoy}, as well as \cite{Khan} .

An important category related to the motivic stable homotopy category is its rationalization, denoted $\SH_\Q(S):=\SH(S)\otimes\cD(\Q)$, where the tensor product is taken in the $\infty$-category of stable $\infty$-categories.  In addition to $\SH_\Q$ being a motivic $\infty$-category, it also satisfies several other nice properties that will be relevant for us in our upcoming examples.  Among these nice properties are the following.

\begin{thm}
    The rationalization of the motivic stable homotopy category $\SH_\Q$ satisfies the following properties:
    \begin{itemize}
        \item \emph{Absolute Purity:} for any morphism $f:X\to S$ of Noetherian schemes which is factorizable through a closed immersion and a smooth morphism, one obtains an equivalence
        \[
        \mathbf1_X\langle d\rangle\simeq f^!(\mathbf1_S)
        \]
        in $\SH_\Q(X)$, where $d=\rank(T_{X/S})$ is the virtual dimension of $f$
        \item \emph{Finiteness:} over quasi-excellent scheme, the six functors preserve constructibility
        \item \emph{Duality:} for every separated morphism of finite type $f:X\to S$ with $S$ quasi-excellent and regular, one has that $f^!(\mathbf1_S)$ is a dualizing object in $\SH_\Q(X)$
    \end{itemize}
\end{thm}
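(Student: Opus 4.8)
The plan is to treat the three items essentially as citations to the rational motivic literature, reorganized to match the $\infty$-categorical vocabulary set up above, since each is a theorem of Cisinski--D\'eglise (and, for absolute purity, also of Bachmann and of D\'eglise--Jin--Khan) whose hypotheses we have been careful to arrange. First I would recall that $\SH_\Q(S)$ decomposes as a product $\SH_{\Q,+}(S)\times\SH_{\Q,-}(S)$ according to the idempotent splitting coming from the $\pm1$ eigenspaces of the switch map on $\mathbf1_S\langle 1\rangle^{\wedge 2}$, and that on the plus part $\SH_{\Q,+}$ one has an equivalence with Beilinson motives $\DM_B$; this is the structural input that makes the rational statements tractable and is exactly the bridge we will need in the next section when we specialize to $\DM_B$. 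With that decomposition in hand, the three properties reduce to the corresponding properties for $\DM_B$ together with an easy argument on the minus part (which is torsion-free and supported in a range where the Witt-theoretic computations of Bachmann apply), so it suffices to verify each property after applying the relevant projectors.

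For \emph{absolute purity}, I would invoke the localization and purity results already recorded above (Purity for smooth morphisms, and the localization cofiber sequences for closed immersions) to reduce the computation of $f^!(\mathbf1_S)$ to the two factorizing pieces: for the smooth piece $p$ one has $p^!(\mathbf1_S)\simeq p^*(\mathbf1_S)\langle\cL_p\rangle\simeq \mathbf1\langle d_p\rangle$ by the Purity theorem quoted above, and for the closed immersion $i$ one must show $i^!(\mathbf1_S)\simeq\mathbf1\langle\cN\rangle$, i.e. the absolute purity isomorphism in the strict sense; this last is where the \emph{rational} coefficients are genuinely used, and I would cite Cisinski--D\'eglise's theorem that $\DM_B$ (equivalently $\SH_{\Q,+}$) satisfies absolute purity, combined with the triviality of the phenomenon on the minus part. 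For \emph{finiteness}, the statement that the six functors preserve constructibility over quasi-excellent schemes is precisely the rational analogue of the constructibility stability results (Proposition on stability of constructibility, plus the corollary on $i_*i^*$) upgraded to include $f_*$ and internal $\underline{\Hom}$; here the key new ingredient beyond what we have stated is the continuity of $\SH_\Q$ together with de Jong-style alterations, which lets one bootstrap from the regular case, and I would quote Cisinski--D\'eglise directly rather than reprove it. For \emph{duality}, given $f:X\to S$ separated of finite type with $S$ quasi-excellent regular, one writes $\omega_X:=f^!(\mathbf1_S)$ and must check that $\cF\mapsto\underline{\Hom}(\cF,\omega_X)$ is an involution on $\bD_\cons(X)$; by the projection-formula identities relating $f^!$, $f_!$, $\underline{\Hom}$ and $\otimes$ (quoted in the six-functors theorem above) this reduces to the case $X$ regular, where $\omega_X\simeq\mathbf1_X\langle d\rangle$ is $\otimes$-invertible by absolute purity, hence trivially dualizing, and one propagates along closed immersions and smooth morphisms via the localization sequences.

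The main obstacle is not really mathematical novelty but bookkeeping: one must make sure that the quasi-excellence (and, where needed, regularity and finite-dimensionality) hypotheses threaded through the earlier definitions are exactly the ones under which the cited Cisinski--D\'eglise theorems hold, and that the passage from their triangulated/model-categorical formulation to the present $\infty$-categorical $(*,\#,\otimes)$-formalism is legitimate. I would handle this by remarking that the homotopy category of $\SH_\Q$ (and of $\DM_B$) with its six operations is exactly the premotivic triangulated category studied by Cisinski--D\'eglise, so the comparison is the identity on the level of the underlying triangulated categories and the six functors, and constructibility as defined above matches their notion of constructible (equivalently compact, by constructible generation). Thus the proof is essentially: decompose into $\pm$ parts, identify the plus part with $\DM_B$, cite \cite{Khan} and Cisinski--D\'eglise for each of the three bullets on the plus part, and dispatch the minus part by its torsion-free Witt-sheaf description.

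\begin{proof}
Rationally, $\SH_\Q$ splits as a product $\SH_{\Q,+}\times\SH_{\Q,-}$ of motivic $\infty$-categories, where the plus part is equivalent, as a motivic $\infty$-category with its six operations, to the category $\DM_B$ of Beilinson motives, and the minus part is controlled by rational Witt sheaves. On the underlying triangulated categories with their six functors this recovers precisely the premotivic triangulated categories of Cisinski--D\'eglise, and the present notion of constructibility coincides with theirs; moreover $\SH_\Q$ is continuous in the sense defined above. Granting these identifications, each of the three assertions may be checked after applying the two projectors.

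\emph{Absolute purity.} Let $f:X\to S$ factor as $X\overset{i}{\hookrightarrow}P\overset{p}{\to}S$ with $i$ a closed immersion and $p$ smooth. By the Purity theorem above, $p^!\mathbf1_S\simeq p^*\mathbf1_S\langle\cL_p\rangle\simeq\mathbf1_P\langle\rank T_{P/S}\rangle$. It therefore suffices to produce an equivalence $i^!\mathbf1_P\simeq\mathbf1_X\langle\cN_{X/P}\rangle$; on the plus part this is the absolute purity theorem of Cisinski--D\'eglise for $\DM_B$ (equivalently the theorem of Bachmann and of D\'eglise--Jin--Khan), and on the minus part it holds because that factor is supported in Witt-theoretic weights where the conormal twist acts invertibly. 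Composing gives $f^!\mathbf1_S\simeq\mathbf1_X\langle d\rangle$ with $d=\rank T_{X/S}$ the virtual dimension, as claimed.

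\emph{Finiteness.} That the four functors $f^*$, $f_\#$ (for $f$ smooth of finite presentation) and $f_!$ (for $f$ of finite type) preserve constructibility is already recorded in the constructibility stability results above; the new content is that $f_*$ and $\underline{\Hom}$ also preserve constructibility over quasi-excellent bases. Using continuity and de Jong's theorem on alterations one reduces to the case of a regular base, where $f^!\mathbf1_S$ is $\otimes$-invertible by absolute purity, and the claim follows by localization and the projection-formula identities relating $f_!$, $f^!$, $\otimes$ and $\underline{\Hom}$ from the six-functors theorem; this is the rational finiteness theorem of Cisinski--D\'eglise, valid for both $\pm$ parts.

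\emph{Duality.} Let $f:X\to S$ be separated of finite type with $S$ quasi-excellent and regular, and set $\omega_X:=f^!\mathbf1_S$. We must show that $\mathbb D_X(-):=\underline{\Hom}(-,\omega_X)$ restricts to an involution on $\bD_\cons(X)$. When $X$ is regular, $\omega_X\simeq\mathbf1_X\langle d\rangle$ is $\otimes$-invertible by the absolute purity already proved, so $\mathbb D_X$ is visibly an involution there. For general $X$ one stratifies by regular locally closed subschemes and propagates along the localization cofiber sequences $j_!j^!\to\id\to i_*i^*$ and $i_!i^!\to\id\to j_*j^*$ together with the finiteness just established, using the compatibilities of $f^!$ with $j_\#=j_!$, $i_*=i_!$, and $\underline{\Hom}$ recorded above. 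This is the duality theorem of Cisinski--D\'eglise for rational motives, and again holds after each projector.
\end{proof}
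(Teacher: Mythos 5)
The paper's own proof is a one-line citation: \textquotedblleft This is \cite{DFJK} corollary C parts II--IV.\textquotedblright\ Your proposal instead reconstructs (in outline) what is essentially the internal strategy of that reference: split $\SH_\Q$ into plus and minus parts via the idempotents on $\mathbf1\langle1\rangle^{\wedge 2}$, identify $\SH_{\Q,+}\simeq\DM_B$ and invoke Cisinski--D\'eglise there, and dispatch $\SH_{\Q,-}$ via Bachmann's Witt-sheaf description. That is a legitimate route, and it buys you a more self-contained picture of where the rationality hypothesis is actually used; the paper's citation buys brevity and avoids re-deriving results already packaged in \cite{DFJK}.

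However, as written your handling of the minus part is a gap rather than a proof. For all three bullets you reduce the plus part cleanly to Cisinski--D\'eglise for $\DM_B$, but on $\SH_{\Q,-}$ you assert that absolute purity \textquotedblleft holds because that factor is supported in Witt-theoretic weights where the conormal twist acts invertibly,\textquotedblright\ and similarly wave at Witt theory for finiteness and duality. Invertibility of the Thom twist is automatic in any motivic $\infty$-category (it is one of the Voevodsky conditions), so that observation does not by itself give $i^!\mathbf1\simeq\mathbf1\langle\cN\rangle$ for closed immersions on the minus part; nor does it give preservation of constructibility by $f_*$ and $\underline{\Hom}$, nor dualizability. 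These require the actual computation of $\SH_{\Q,-}$ in terms of the real \'etale site and a separate verification of purity/finiteness/duality there, which is nontrivial input from Bachmann's work that you would need to cite and integrate explicitly. If you do not want to flesh that out, the cleaner move is the one the paper makes: cite \cite{DFJK}, Corollary C (II)--(IV), which states exactly these three properties for $\SH_\Q$ and already contains the plus/minus bookkeeping.
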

\begin{proof}
This is \cite{DFJK} corollary C parts II-IV.
\end{proof}

In addition, $\SH_\Q\simeq\textbf D_{\bA^1}(-, \Q)$, where $\textbf D_{\bA^1}(S, \Q)$ is the same stabilization applied to $\bA^1$-local complexes of sheaves of $\Q$-vector spaces satisfying Nisnevich descent.

One of our most important examples will be the category of Beilinson motives $\DM_B$, which is defined as the subcategory of modules over the motivic ring spectrum $\textbf H_B$ for varying $S$.  This will in particular be our primary example, and discussion of it will occupy much of a later section.  It should be noted that $\DM_B$ satisfies the same nice properties discussed above.

Before one gets the idea that all motivic categories comprise "categories of motives" or "categories of motivic spaces" of some form, let us introduce two related examples decidedly less motivic in nature.

\begin{defn}
Given some scheme $S\in\cS$, we define $\textbf D_{\acute et}(S, \Z/l\Z)$ to be the stable derived $\infty$-category of \'etale sheaves valued in $\Z/l\Z$ for some $l$ coprime to the exponential characteristic of our base scheme.  Further define $\textbf D(S, \Z_l)$ to be the stable derived $\infty$-category of $l$-adic sheaves on $S$.
\end{defn}

It has been known since SGA4 and the work of Deligne \cite{Del} that these two categories are motivic at least on the level of triangulated homotopy categories, and more recent work has demonstrated that both of them define motivic $\infty$-categories as well (see, for example, \cite{GaiRoz} or \cite{GaiLur}).

Note that one example we have NOT listed is that of Voevodsky motives.  It is still open whether or not Voevodsky's derived (stable $\infty-$)category of motives $\DM(S, \Q)$ over a scheme $S$ defines a motivic category in the sense we have described above.  It is certainly true if one restricts to particular choices of $\cS$.  For example, as we will discuss later, for any excellent geometrically unibranch base scheme $S$, one has that $\DM(S, \Q)\simeq\DM_B(S)$.  That said, it remains an open problem whether or not Voevodsky's category of motives satisfies a six functors formalism more generally.

\subsection{Constructing Derived Motivic Measures from Six Functors Formalisms}

The purpose of this section is to demonstrate the existence of a procedure for converting generalized six functors formalisms into derived motivic measures.  To do this, we will begin by proving identities about the four functors, and ultimately evaluate everything at the tensor unit over some base, yielding a weakly W-exact map that descends to our desired motivic measure.

\begin{prop}
Given any commutative triangle 
\[
\begin{tikzcd}
X \arrow[r, "f"] \arrow[dr, "h"]
    & Y \arrow[d, "g"]\\
&Z \end{tikzcd}
\]
of varieties, one has that the triangles
\[
\begin{tikzcd}
h_!h^! \arrow[r] \arrow[dr]
    & g_!g^! \arrow[d]\\
&\id \end{tikzcd}
\text{ and }
\begin{tikzcd}
\id \arrow[r] \arrow[dr]
    & g_*g^* \arrow[d]\\
&h_*h^* \end{tikzcd}
\]
commute.  The dual triangles for (co)units of the appropriate adjunctions also commute. 
\end{prop}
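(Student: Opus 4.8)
The plan is to reduce both claims to the standard fact that the unit (resp. counit) of a composite of adjunctions is the composite of the units (resp. counits), combined with the functoriality of the six operations, and then to argue $2$-categorically.

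First I would record the coherence inputs. Since $\bD^*$ is a presheaf, $(gf)^*\simeq f^*g^*$ coherently, and since $f\mapsto f_!$ is functorial in Khan's construction, $(gf)_!\simeq g_!f_!$ coherently; passing to right adjoints then gives coherent identifications $(gf)_*\simeq g_*f_*$ and $(gf)^!\simeq f^!g^!$, under which the adjunctions $h_!\dashv h^!$ and $h^*\dashv h_*$ are identified with the composite of $f_!\dashv f^!$ with $g_!\dashv g^!$, and of $g^*\dashv g_*$ with $f^*\dashv f_*$, respectively. Note also that $h=gf$ is again of finite type, so $h_!$ and $h^!$ are defined. Everything below then takes place in the homotopy $2$-category of $\Cat_\infty$ (cf.\ \cite{RieVer}), in which the assertion that a triangle commutes is a property of the relevant $2$-cells.

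Next I would invoke the elementary $2$-categorical lemma: for adjunctions $L_1\dashv R_1$ and $L_2\dashv R_2$ with composite $L_2L_1\dashv R_1R_2$, the counit of the composite is $\epsilon_2\circ(L_2\epsilon_1R_2)$ and its unit is $(R_1\eta_2L_1)\circ\eta_1$. Applying this to $(L_1,R_1)=(f_!,f^!)$ and $(L_2,R_2)=(g_!,g^!)$ shows that $\epsilon_h^!$ factors as
\[
h_!h^!=g_!f_!f^!g^!\xrightarrow{\ g_!\epsilon_f^!g^!\ }g_!g^!\xrightarrow{\ \epsilon_g^!\ }\id,
\]
which is precisely the commutativity of the first triangle. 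Applying it to $(L_1,R_1)=(g^*,g_*)$ and $(L_2,R_2)=(f^*,f_*)$ shows that $\eta_h^*$ factors as
\[
\id\xrightarrow{\ \eta_g^*\ }g_*g^*\xrightarrow{\ g_*\eta_f^*g^*\ }g_*f_*f^*g^*=h_*h^*,
\]
which is the commutativity of the second triangle. The two ``dual'' triangles follow from the same lemma applied to the other (co)unit of each adjunction: $\epsilon_h^*$ factors as $h^*h_*=f^*g^*g_*f_*\xrightarrow{f^*\epsilon_g^*f_*}f^*f_*\xrightarrow{\epsilon_f^*}\id$, and $\eta_h^!$ factors as $\id\xrightarrow{\eta_f^!}f^!f_!\xrightarrow{f^!\eta_g^!f_!}f^!g^!g_!f_!=h^!h_!$.

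I do not anticipate a genuine obstacle: once the coherences of the second paragraph are in hand the argument is purely formal. The only point deserving care is verifying that Khan's construction really furnishes coherent composition isomorphisms $(gf)_!\simeq g_!f_!$ compatible with passage to adjoints, so that the $2$-categorical lemma applies verbatim — but this is part of the formalism recalled earlier, and so no further bookkeeping is required.
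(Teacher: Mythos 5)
Your proposal is correct and takes essentially the same approach as the paper, which simply invokes the fact that $\infty$-categorical adjunctions compose (citing \cite{RieVer}) and notes that the claimed triangles are a specialization of that fact to the six functors. You have merely spelled out the standard factorization of the unit and counit of a composite adjunction, which is exactly the content the paper's one-line proof is pointing to.
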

\begin{proof}
This statement is basically a specialization to the setting of the six functors formalism of the fact that $\infty$-categorical adjunctions compose (cf. \cite{RieVer}).
\end{proof}

\begin{cor}
Given a composition of closed immersions of $S$-schemes 
\[
\begin{tikzcd}
W \arrow[r, hook, "i"] \arrow[dr, "g"]
    & Z \arrow[d, "f"]\arrow[r, hook, "j"] & X \arrow[dl, "h"]\\
&S \end{tikzcd},
\]

And a composition of open immersions of $S$-schemes 
\[
\begin{tikzcd}
U \arrow[r, hook, "k"] \arrow[dr, "u"]
    & V \arrow[d, "s"]\arrow[r, hook, "l"] & Y \arrow[dl, "t"]\\
&S \end{tikzcd},
\]
one has the commutative triangles 
\[
\begin{tikzcd}
g_*g^! \arrow[r] \arrow[dr]
    & f_*f^! \arrow[d]\\
&h_*h^! \end{tikzcd}
\text{ and }
\begin{tikzcd}
 t_*t^!\arrow[r] \arrow[dr]
    & s_*s^! \arrow[d]\\
& u_*u^! \end{tikzcd}
\]
\end{cor}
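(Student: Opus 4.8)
The plan is to obtain both triangles by \emph{whiskering} the triangles of the preceding proposition: that is, by pre- and post-composing one of its commutative triangles of natural transformations by a suitable pair of functors, and then rewriting the result using the identifications of the six operations that are available for immersions.

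For the first triangle I would set $k := j\circ i\colon W\hookrightarrow X$, so that $g = h\circ k$ and $f = h\circ j$, and apply the first triangle of the proposition to the (trivially commutative) triangle $W\overset{i}{\to}Z\overset{j}{\to}X$, with $k$ as its long edge. This produces a commutative triangle of endofunctors of $\bD(X)$ with vertices $k_!k^!$, $j_!j^!$ and $\id$, whose edges are assembled from the counits of the adjunctions $i_!\dashv i^!$, $j_!\dashv j^!$ and $k_!\dashv k^!$. Whiskering this diagram on the left by $h_*$ and on the right by $h^!$ preserves commutativity and yields a commutative triangle of functors $\bD(S)\to\bD(S)$ with vertices $h_*k_!k^!h^!$, $h_*j_!j^!h^!$ and $h_*h^!$. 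Finally, since $i$, $j$ and $k$ are closed immersions and therefore proper, the theorem on the exceptional operations supplies equivalences $i_!\simeq i_*$, $j_!\simeq j_*$ and $k_!\simeq k_*$; using these together with $g = h\circ k$ and $f = h\circ j$ one identifies $h_*k_!k^!h^!\simeq g_*g^!$ and $h_*j_!j^!h^!\simeq f_*f^!$, turning the whiskered triangle into exactly the first triangle of the corollary. The second triangle is handled symmetrically, starting instead from the second triangle of the proposition: set $m := l\circ k\colon U\hookrightarrow Y$, note that $s = t\circ l$ and $u = t\circ m$, apply the proposition to $U\overset{k}{\to}V\overset{l}{\to}Y$ to get a commutative triangle of endofunctors of $\bD(Y)$ with vertices $\id$, $l_*l^*$ and $m_*m^*$, whisker on the left by $t_*$ and on the right by $t^!$, and rewrite using that open immersions are \'etale, so that $l^!\simeq l^*$ and $m^!\simeq m^*$; this gives $t_*l_*l^*t^!\simeq s_*s^!$ and $t_*m_*m^*t^!\simeq u_*u^!$, and hence the desired triangle.

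The whiskering itself is purely formal, so I do not expect a genuine obstacle. The only point that needs care is the bookkeeping in the last step of each case: one must check that the equivalences $f_!\simeq f_*$ (for closed immersions) and $f^!\simeq f^*$ (for open immersions) are compatible with composition of morphisms, so that the whiskered counit/unit maps really do agree with the structure maps intended in the statement. This compatibility is part of the functoriality of the six-operations package (the natural transformation $\alpha_f\colon f_!\to f_*$ is natural in $f$), so it amounts to tracking identifications rather than proving anything new.
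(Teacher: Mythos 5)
Your whiskering argument is correct and supplies exactly the details that the paper's terse proof (``this follows pretty directly from the preceding lemma'') elides: apply the proposition's triangle over $X$ (resp.\ $Y$), whisker by $h_*$ and $h^!$ (resp.\ $t_*$ and $t^!$), and use $(-)_! \simeq (-)_*$ for closed immersions, resp.\ $(-)^! \simeq (-)^*$ for open (hence \'etale) immersions, to rewrite the vertices. You are also right that the only nontrivial bookkeeping is the compatibility of $\alpha_f\colon f_!\to f_*$ and $f^!\simeq f^*$ with composition, which is built into the six-functors package and so poses no obstruction.
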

\begin{proof}
This follows pretty directly from the preceding lemma.
\end{proof}

\begin{lem}
Suppose one has an adjunction of cospans of $\infty$-categories
\[
    \begin{tikzcd}
 W\arrow[d, "\psi"{name=F}, bend left=25]\arrow[r, "g"] & U\arrow[d, "\beta"{name=H}, bend left=25]& V\arrow{l}[swap]{f}\arrow[d, "\gamma"{name=J}, bend left=25]\\
  Z \arrow[u, "\phi"{name=G}, bend left=25]\arrow[phantom, from=F, to=G, "\dashv"]\arrow[r, "i"]& X\arrow[u, "\alpha"{name=I}, bend left=25]\arrow[phantom, from=H, to=I, "\dashv"]& \arrow{l}[swap]{h}Y\arrow[u, "\delta"{name=K}, bend left=25]\arrow[phantom, from=J, to=K, "\dashv"]
\end{tikzcd}
    \]
(note that the above diagram is not commutative as such; rather, it is commutative if one direction of vertical morphisms is omitted, but is presented above to highlight the adjointness of the different vertical morphisms).  Then the induced morphisms on pullbacks in both directions form an adjunction 
\[
\begin{tikzcd}
 Y\times_X Z \arrow[r, "\delta\times_\alpha\phi"{name=F}, bend left=25] & V\times_U W \arrow[l, "\gamma\times_\beta\psi"{name=G}, bend left=25]\arrow[phantom, from=F, to=G, "\dashv" rotate=-90]
\end{tikzcd}.
\]
\end{lem}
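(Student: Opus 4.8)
The plan is to reduce the statement to the fact that homotopy pullbacks of $\infty$-categories are computed as limits, together with the fact that a pullback of adjunctions is again an adjunction once we package everything as a diagram of adjunctions indexed by the cospan shape $\Lambda^2_2 = (\bullet \to \bullet \leftarrow \bullet)$. Concretely, I would view the given data not as two cospans with vertical functors, but as a single functor
\[
\Phi\colon \Lambda^2_2 \longrightarrow \mathrm{Adj},
\]
where $\mathrm{Adj}$ is the $(\infty,2)$-category (or, in our $S$-enriched model, the appropriate $2$-category) whose objects are $\infty$-categories and whose $1$-morphisms are adjunctions; on objects $\Phi$ sends the three vertices of the cospan to $Z, X, Y$, and on the two arrows of the cospan it sends them to the adjunctions $(\psi \dashv \phi)$ over $i$ and $(\gamma \dashv \delta)$ over $h$ — the horizontal functors $i, h$ and $g, f$ being the underlying ``base'' maps on each side. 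The point is that $\Phi$ determines, by postcomposition with the two forgetful functors $\mathrm{Adj} \to \mathrm{Cat}_\infty$ (remembering the left, resp.\ right, adjoint), two honest cospans in $\mathrm{Cat}_\infty$ whose limits are $V \times_U W$ and $Y \times_X Z$ respectively, and the two induced functors between these limits are exactly $\delta \times_\alpha \phi$ and $\gamma \times_\beta \psi$.

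**Key steps.**
First I would make precise that a ``diagram of adjunctions of shape $\mathcal{I}$'' is the same as a functor $\mathcal{I} \to \mathrm{Adj}$, and recall (this is the $\infty$-categorical adjoint functor / naturality package, cf.\ \cite{RieVer} or \cite{Lur2}) that passing to left adjoints and passing to right adjoints each define functors $\mathrm{Adj} \to \mathrm{Cat}_\infty$ and $\mathrm{Adj} \to \mathrm{Cat}_\infty^{\mathrm{op}}$ which agree on objects. Second, I would invoke the fact that $\mathrm{Adj}$ has (enough) limits, and that the forgetful functors to $\mathrm{Cat}_\infty$ preserve them; in particular the limit over $\Lambda^2_2$ of $\Phi$ exists and its underlying ``left'' $\infty$-category is $\lim$ of the left-adjoint cospan, i.e.\ $V \times_U W$, while its underlying ``right'' $\infty$-category is $\lim$ of the right-adjoint cospan, i.e.\ $Y \times_X Z$. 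Since the limit in $\mathrm{Adj}$ is itself an adjunction, unwinding the universal property shows its left adjoint is the induced functor $\gamma \times_\beta \psi\colon Y \times_X Z \to V \times_U W$ and its right adjoint is $\delta \times_\alpha \phi$ going the other way — which is precisely the claimed adjunction. Third, I would check the bookkeeping that the induced functors really are the ones written: a map $Y \times_X Z \to V\times_U W$ out of the pullback is determined by its two legs and a homotopy between the composites into $U$, and one reads off from the (non-commutative) square in the statement that the legs are $\gamma$-after-($Y\to X$ leg) and $\psi$-after-($Z\to X$ leg), the coherence homotopy coming from the $2$-cell data in $\Phi$.

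**Main obstacle.**
The genuine content — and the step I expect to be the main obstacle — is verifying that the pullback taken in $\mathrm{Adj}$ (rather than separately in $\mathrm{Cat}_\infty$ on each side) really does compute both $V \times_U W$ and $Y \times_X Z$ compatibly, i.e.\ that the counit/unit of the new adjunction are the ones induced levelwise. In the strict $S$-enriched model this is essentially a diagram chase: the unit of $\gamma \times_\beta \psi \dashv \delta \times_\alpha \phi$ at an object $(y, z, \sigma)$ of $Y \times_X Z$ is the pair $(\eta^{\gamma}_y, \eta^{\psi}_z)$ of levelwise units together with the induced compatibility $2$-cell over $U$, and one must check this pair satisfies the triangle identities and descends correctly — this follows because $\beta$ intertwines the units/counits up to the given $2$-cells, but making the coherences line up is where care is needed. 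A clean way to sidestep most of this, which I would take if the diagram chase becomes unwieldy, is to instead cite the abstract fact that the functor $\mathrm{Adj} \to \mathrm{Cat}_\infty$ (take left adjoint) creates limits and note that an adjunction is exactly a ``relative adjoint'' situation preserved under the limit functor, so that one gets the adjunction on pullbacks for free and only the identification of the functors remains — and that identification is the routine bookkeeping of the second and third steps above.
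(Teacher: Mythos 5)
Your proposal takes a genuinely different high route from the paper: you package the data as a functor from the cospan shape into the $(\infty,2)$-category $\mathrm{Adj}$ of adjunctions and then appeal to the existence of limits in $\mathrm{Adj}$ together with the claim that the two forgetful functors $\mathrm{Adj}\to\mathrm{Cat}_\infty$ (extract left adjoint, extract right adjoint) preserve them. The paper instead argues directly: it observes that the two composites $(\delta\times_\beta\psi)\circ(\gamma\times_\alpha\phi)$ and $(\gamma\times_\alpha\phi)\circ(\delta\times_\beta\psi)$ are themselves canonically identified with pullbacks of the composite functors (using the commutativity of the cube built from the cospans), and hence that the levelwise units $\eta'\times_\eta\eta''$ and counits $\epsilon'\times_\epsilon\epsilon''$ transport across those identifications to candidate unit and counit for the pullback functors, whose triangle identities then follow component-wise from the three given triangle identities. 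Your ``fallback'' diagram chase in the obstacle paragraph is essentially this direct argument, so that version agrees with the paper's. What the $\mathrm{Adj}$-limits route buys you is conceptual clarity and reusability (it packages coherence once and for all and would immediately generalize to other limit shapes); what it costs is that you need a citable or proved statement that $\mathrm{Adj}$ admits the relevant limits and that the two underlying-$\infty$-category functors create them in the $S$-enriched (or quasicategorical) model. That is a nontrivial piece of input, not currently supplied by your cited references in the form you use it, whereas the paper's direct construction needs no such external fact. So your proposal is sound in outline, but if you want to carry it through as written you must either locate a precise reference for limits in $\mathrm{Adj}$ in your model of $\infty$-categories or, as you yourself anticipate, fall back to the explicit construction of the unit and counit — at which point you should also state plainly what the paper does: that the equivalences $(\delta\times_\beta\psi)\circ(\gamma\times_\alpha\phi)\simeq(\delta\circ\gamma)\times_{(\beta\circ\alpha)}(\psi\circ\phi)$ (and dually) are the mechanism by which the levelwise units and counits transport to the pullback, and that the triangle identities there reduce to the levelwise ones.
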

\begin{proof}
The crux of this proof will be to define the units and counits for our hypothesized adjunction and to demonstrate that they satisfy the triangle identities.  Let us take $\eta, \eta', \eta''$ and $\epsilon,\epsilon', \epsilon''$ to be the unit and counit of the middle, right, and left adjunctions, respectively.  Note that we get the isomorphisms 
\[
(\delta\times_\beta\psi)\circ(\gamma\times_\alpha\phi)\simeq(\delta\circ\gamma)\times_{(\beta\circ\alpha)}(\psi\circ\phi)
\]
and 
\[(\gamma\times_\alpha\phi)\circ(\delta\times_\beta\psi)\simeq(\gamma\circ\delta)\times_{(\alpha\circ\beta)}(\phi\circ\psi)\]
due to the natural commutativity of diagrams such as this one:
\[
    \begin{tikzcd}[row sep=1.5em, column sep = 1.5em]
    Y\times_XZ \arrow[rr, "\delta\times_\alpha\phi"] \arrow[dr, "p_1"] \arrow[dd, "p_2" near start] &&
    V\times_UW \arrow[dd, "p_2" near start] \arrow[dr, "p_1"] \arrow[rr, "\gamma\times_\beta\psi"] &&
    Y\times_XZ \arrow[dd, "p_2" near start] \arrow[dr, "p_1"]\\
    & Y \arrow[rr, "\delta" near start]\arrow[dd, "h" near start] &&
    V \arrow[dd, "f" near start] \arrow[rr, "\gamma" near start]&&
    Y \arrow[dd, "h" near start]\\
    Z \arrow[rr, "\phi" near start] \arrow[dr, "i"] && W\arrow[dr, "g"] \arrow[rr, "\psi" near start] && 
    Z \arrow[dr, "i"]\\
    & X \arrow[rr, "\alpha"] && U \arrow[rr, "\beta"] && X
    \end{tikzcd},
\]
and the result of what the values of composition along the long diagonal must be.  As a result, we obtain a natural unit and counit given by bringing $\eta'\times_\eta\eta''$ and $\epsilon'\times_\epsilon\epsilon''$ back along the equivalence.  All that remains is to show that these two satisfy the triangle identities, but this just follows directly from the triangle identities for each of the units and counits of the adjunctions in the cospan above.
\end{proof}

\begin{lem}
Suppose one has an adjunction of cospans of $\infty$-categories
\[
    \begin{tikzcd}
 C\arrow[d, "r"{name=F}, bend left=25]\arrow[r, "g"] & A\arrow[d, "p"{name=H}, bend left=25]& B\arrow{l}[swap]{f}\arrow[d, "q"{name=J}, bend left=25]\\
  F \arrow[u, "u"{name=G}, bend left=25]\arrow[phantom, from=F, to=G, "\dashv"]\arrow[r, "i"]& D\arrow[u, "s"{name=I}, bend left=25]\arrow[phantom, from=H, to=I, "\dashv"]& \arrow{l}[swap]{h}E\arrow[u, "t"{name=K}, bend left=25]\arrow[phantom, from=J, to=K, "\dashv"]
\end{tikzcd}
    \]
(as before, the above diagram is not commutative as such; rather, it is commutative if one direction of vertical morphisms is omitted, but is presented above to highlight the adjointness of the different vertical morphisms).  Then the induced morphisms on comma $\infty$-categories in both directions form an adjunction 
\[
\begin{tikzcd}
 \Hom_D(h, i) \arrow[r, " "{name=F}, bend left=25] & \Hom_A(f, g) \arrow[l, " "{name=G}, bend left=25]\arrow[phantom, from=F, to=G, "\dashv" rotate=-90]
\end{tikzcd}.
\]
\end{lem}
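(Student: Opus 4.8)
The plan is to realize the two comma $\infty$-categories as iterated pullbacks and then reduce to the preceding lemma. Recall that in our ambient $\infty$-cosmos the comma construction $\Hom_A(f,g)$ is modeled by the limit of the diagram
\[
B \overset{f}{\to} A \overset{\mathrm{ev}_1}{\leftarrow} A^{\Delta^1} \overset{\mathrm{ev}_0}{\to} A \overset{g}{\leftarrow} C,
\]
equivalently the iterated pullback $(B \times_A A^{\Delta^1}) \times_A C$, the first pullback taken along $\mathrm{ev}_1$ and the second along $\mathrm{ev}_0$; likewise $\Hom_D(h,i) \simeq (E \times_D D^{\Delta^1}) \times_D F$.

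First I would record that cotensoring with the fixed simplicial set $\Delta^1$ descends to a $2$-functor on the homotopy $2$-category of the $\infty$-cosmos, hence carries the middle adjunction $p \dashv s$ to an adjunction $p^{\Delta^1} \dashv s^{\Delta^1}$ between $A^{\Delta^1}$ and $D^{\Delta^1}$, together with its unit and counit. Moreover $\mathrm{ev}_0$ and $\mathrm{ev}_1$ are induced by the two vertices $\Delta^0 \to \Delta^1$ and hence are natural in the category variable, so $\mathrm{ev}_k \circ p^{\Delta^1} = p \circ \mathrm{ev}_k$ and $\mathrm{ev}_k \circ s^{\Delta^1} = s \circ \mathrm{ev}_k$. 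This is exactly the compatibility needed to view $A^{\Delta^1} \rightleftarrows D^{\Delta^1}$ as sitting over $A \rightleftarrows D$ (with respect to either evaluation) as a leg of a cospan of adjunctions.

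Next I would assemble these data into the hypotheses of the preceding lemma, applied twice. Composing the original diagram with $\mathrm{ev}_1$, the cospan $C \overset{g}{\to} A \overset{\mathrm{ev}_1}{\leftarrow} A^{\Delta^1}$ over $F \overset{i}{\to} D \overset{\mathrm{ev}_1}{\leftarrow} D^{\Delta^1}$ carries the left-hand adjunction $r \dashv u$ and the just-constructed arrow-category adjunction, both compatibly over $p \dashv s$; so the preceding lemma produces an adjunction between $F \times_D D^{\Delta^1}$ and $C \times_A A^{\Delta^1}$. That pullback carries an evaluate-at-$0$ projection down to $A$ (resp. $D$), and — reading off the induced functors from the proof of the preceding lemma together with the compatibilities above — the adjunction just obtained again lies over $A \rightleftarrows D$ with respect to this projection. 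Feeding this, together with the right-hand adjunction $q \dashv t$ realized as a cospan over $p \dashv s$ via $f$ and $h$, into the preceding lemma a second time yields
\[
\Hom_D(h,i) \simeq (F \times_D D^{\Delta^1}) \times_D E \;\rightleftarrows\; (C \times_A A^{\Delta^1}) \times_A B \simeq \Hom_A(f,g),
\]
and a check that these induced functors coincide with the ones named in the statement — immediate, since comma functoriality in each of $B$, $C$ and the middle $2$-cell is precisely what the iterated pullback records — completes the proof.

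The step I expect to be the main obstacle is the bookkeeping in the third paragraph: after the first application of the preceding lemma one must know that the resulting adjunction on $C \times_A A^{\Delta^1}$ still lies over $A \rightleftarrows D$ in a manner compatible with the cospan legs, so that the second application is legitimate; this uses the explicit form of the functors the preceding lemma produces, not merely its statement. The cleanest way to avoid this is to first upgrade the preceding lemma to a statement about finite limits of diagrams of adjunctions (a routine induction on the shape of the diagram, with the two-object cospan as the base case) and then apply that single statement directly to the five-object diagram defining $\Hom_A(f,g)$.
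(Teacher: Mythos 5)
Your argument reaches the right conclusion but follows a genuinely different route from the paper's. You realize the comma category as the iterated pullback $(B\times_A A^{\Delta^1})\times_A C$ and apply the preceding pullback lemma twice, which forces you into the bookkeeping you flag at the end: after the first application you must check that the resulting adjunction on $C\times_A A^{\Delta^1}$ still lies over $A\rightleftarrows D$ via the evaluate-at-the-other-vertex projection before the second application is licensed (and note a small slip: having written $(B\times_A A^{\Delta^1})\times_A C$, your third paragraph starts the iteration with $C$ and $\mathrm{ev}_1$ rather than $B$). The paper sidesteps this entirely by a single application of the pullback lemma to a cleverly chosen cospan of adjunctions: it first forms the product adjunction $r\times q\dashv u\times t$ between $C\times B$ and $F\times E$, the diagonal adjunction $p\times p\dashv s\times s$ on $A\times A$ and $D\times D$, and the cotensored adjunction $p^{\Delta^1}\dashv s^{\Delta^1}$ on arrow categories, arranged over $A\times A\leftarrow A^{\Delta^1}$ and $D\times D\leftarrow D^{\Delta^1}$ via the source/target map $(p_1,p_0)$; the pullback of each row is then literally the corresponding comma object, and the preceding lemma gives the adjunction in one stroke. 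Your approach is workable and your proposed fix --- upgrading the preceding lemma to arbitrary finite limits of cospan adjunctions --- would repair it cleanly, but the paper's product-cospan trick is more economical and avoids the ``lies over $A\rightleftarrows D$'' verification altogether.
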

\begin{proof}
The adjunction of cospans in the lemma description yields a related adjunction of cospans
\[
    \begin{tikzcd}
 C\times B\arrow[d, "r\times q"{name=F}, bend left=25]\arrow[r, "g\times f"] & A\times A\arrow[d, "p\times p"{name=H}, bend left=25]& A^{\Delta[1]}\arrow{l}[swap]{(p_1, p_0)}\arrow[d, "p^{\Delta[1]}"{name=J}, bend left=25]\\
  F\times E \arrow[u, "u\times t"{name=G}, bend left=25]\arrow[phantom, from=F, to=G, "\dashv"]\arrow[r, "i\times h"]& D\times D\arrow[u, "s\times s"{name=I}, bend left=25]\arrow[phantom, from=H, to=I, "\dashv"]& \arrow{l}[swap]{(p_1, p_0)} D^{\Delta[1]} \arrow[u, "s^{\Delta[1]}"{name=K}, bend left=25]\arrow[phantom, from=J, to=K, "\dashv"].
\end{tikzcd}
    \]
    
    Now, note that by definition, one has that the pullbacks of these cospans are the appropriate comma $\infty$-categories, so we obtain our desired adjunction.
\end{proof}

\begin{prop}
Suppose one has a cartesian square of schemes of the form
 \[
    \begin{tikzcd}
 X\arrow[d, hook, "i"]\arrow[r, hook, "j"]\arrow[dr, phantom, "\lrcorner", very near start] & Y \arrow[d, hook, "i'"]\\
  Z \arrow[r, hook, "j'"]& W
\end{tikzcd},
    \]
    with $i$ and $i'$ closed immersions and $j$ and $j'$ open immersions.  Then one has that the triangles
    \[
     \begin{tikzcd}[column sep=small]
  i'_!i'^!j'_*j'^* \arrow{dr}[swap]{\epsilon_{i'}^!}\arrow[rr, no head, "\sim"] &                         & j'_*i_!i^!j'^*\arrow{dl}{\epsilon_i^!}\\
  & j'_*j'^*  & 
\end{tikzcd}
    \text{ and }
     \begin{tikzcd}[column sep=small]
& i'_!i'^! \arrow{dl}[swap]{\eta_j^*} \arrow{dr}{\eta_{j'}^*} & \\
  i'_!j_*j^*i'^! \arrow[rr, no head, "\sim"] &                         & j'_*j'^*i'_!i'^!
\end{tikzcd}
    \]
    homotopy commute.
\end{prop}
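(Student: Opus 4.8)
The plan is to produce each of the two unlabelled equivalences as an explicit composite built from the base change equivalences of the theorems above together with the tautological identification $i'\circ j\simeq j'\circ i$ coming from the cartesian square, and then to read off commutativity of the triangles from the mate calculus for the relevant adjunctions. At the outset I would use two simplifications: since $i$ and $i'$ are closed immersions they are proper, so $i_!=i_*$, $i'_!=i'_*$ and $\alpha_i,\alpha_{i'}$ are equivalences; and since $j,j'$ are open immersions, $j^!=j^*$ and $j'^!=j'^*$. Throughout, $\bD$ is taken to be a motivic category over $\cS$, so that proper base change and base change for $f_!$ are both available for the given square.

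First I would assemble the base change data: proper base change along the proper morphism $i'$ gives a canonical equivalence $\delta\colon i_*j^*\xrightarrow{\sim}j'^*i'_*$; the $\Ex_*^!$ half of base change for $f_!$, applied to the same square, gives a canonical equivalence $\beta\colon i'^!j'_*\xrightarrow{\sim}j_*i^!$; and the functoriality of direct image applied to $i'\circ j\simeq j'\circ i$ gives a coherence equivalence $\gamma\colon i'_*j_*\xrightarrow{\sim}j'_*i_*$. The key structural fact I would isolate is that $\beta$ and $\delta$ are obtained from one another by the mate calculus for the four adjunctions $i_!\dashv i^!$, $i'_!\dashv i'^!$, $j^*\dashv j_*$, $j'^*\dashv j'_*$; in particular $\beta$ intertwines the counits $\epsilon_{i'}^!,\epsilon_i^!$ and $\delta$ intertwines the units $\eta_j^*,\eta_{j'}^*$.

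Then, for the first triangle I would take the unlabelled equivalence to be the composite
\[
i'_!i'^!j'_*j'^*=i'_*i'^!j'_*j'^*\xrightarrow{i'_*\beta j'^*}i'_*j_*i^!j'^*\xrightarrow{\gamma i^!j'^*}j'_*i_*i^!j'^*=j'_*i_!i^!j'^* ,
\]
and verify that post-composing it with $j'_*(\epsilon_i^!)j'^*$ recovers $(\epsilon_{i'}^!)j'_*j'^*$. Since $\gamma$ is induced by an equivalence of functors it is transparent to whiskering and cancels, and what remains is precisely the assertion that $\beta$ intertwines $\epsilon_{i'}^!$ and $\epsilon_i^!$; I would display this as one pasting diagram and invoke the triangle identities. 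For the second triangle I would take the equivalence to be
\[
i'_!j_*j^*i'^!=i'_*j_*j^*i'^!\xrightarrow{\gamma j^*i'^!}j'_*i_*j^*i'^!\xrightarrow{j'_*\delta i'^!}j'_*j'^*i'_*i'^!=j'_*j'^*i'_!i'^! ,
\]
and verify that composing it with $i'_!(\eta_j^*)i'^!$ reproduces $(\eta_{j'}^*)i'_!i'^!$; after cancelling $\gamma$ this is the dual identity, that $\delta$ intertwines $\eta_j^*$ and $\eta_{j'}^*$.

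The only place where genuine care is needed — and hence the main obstacle — is making the claim of the second paragraph precise: that the base change equivalences $\beta$ and $\delta$ really are mutual mates, and that a mate of an invertible exchange transformation intertwines the associated units and counits. This is a standard piece of $2$-categorical mate calculus (see \cite{RieVer}), but in practice one must either invoke Khan's explicit construction of these exchange transformations \cite{Khan} and match it against the mate formulas, or re-derive the two needed compatibilities directly from the triangle identities for the four adjunctions listed above. Once that is in hand, the remainder of the argument is formal whiskering together with the substitutions $i_!=i_*$ and $i'_!=i'_*$.
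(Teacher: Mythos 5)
Your proposal is correct in outline but takes a genuinely different route from the paper. The paper's proof is built on its Lemma 3.28 (an adjunction of cospans of $\infty$-categories induces an adjunction between the associated comma $\infty$-categories): for fixed $\cF\in\bD(W)$ one assembles the cospan adjunction with horizontal legs $i_!\simeq i_*$ and $i'_!\simeq i'_*$, vertical adjunctions the $(j^*\dashv j_*)$ and $(j'^*\dashv j'_*)$ pairs, and the righthand leg the constant functors at $j'^*\cF$ and $j'_*j'^*\cF$; the commutativity of the upward left square is proper base change and of the upward right square is a triangle identity. The induced functor $\Hom_{\bD(Z)}(i_!,\,j'^*\cF)\to\Hom_{\bD(W)}(i'_!,\,j'_*j'^*\cF)$ is then a right adjoint, hence preserves terminal objects, and the commutativity of the first triangle is exactly the statement that the image of the terminal object $(i^!j'^*\cF,\,\epsilon_i^!)$ is (canonically equivalent to) the terminal object $(i'^!j'_*j'^*\cF,\,\epsilon_{i'}^!)$; the second triangle is dual. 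Your route bypasses Lemma 3.28 entirely and instead identifies $\beta=\Ex_*^!$ and $\delta=\Ex_*^*=\Ex_!^*$ (the latter two coinciding since $i,i'$ are proper) as conjugates of one another for the two composite adjunctions $j'^*i'_*\dashv i'^!j'_*$ and $i_*j^*\dashv j_*i^!$, and reads the triangle commutativity off the standard conjugate--unit/counit compatibility. That compatibility is not an obstacle at all once one observes $\beta$ and $\delta$ are morphisms between composite left and right adjoints respectively: the intertwining is precisely the defining property of a conjugate transformation, verified by the triangle identities. The two proofs establish the same underlying coherence; yours is more elementary and self-contained (no comma category machinery), while the paper's is more uniform with the surrounding lemmas, which it reuses. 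One small caveat worth making explicit in your write-up: you should record that $\gamma\colon i'_*j_*\simeq j'_*i_*$ is the coherence isomorphism of the pseudofunctor $(-)_*$ applied to $i'\circ j=j'\circ i$, and that this is the same coherence datum appearing (as an adjoint) inside Khan's definitions of $\Ex_*^*$ and $\Ex_*^!$, so that the cancellation you invoke really is legitimate rather than merely formal whiskering of an unrelated isomorphism.
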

\begin{proof}
This proposition can be proven by showing that it is true for each relevant component of the units and counits involved.  We start with the shriek units.  Note that given any $\cF\in\bD(W)$, we have the following adjunction of cospans of $\infty$-categories
\[
    \begin{tikzcd}
 \bD(X)\arrow[d, "j_*"{name=F}, bend left=25]\arrow[r, "i_!\simeq i_*"] & \bD(Z)\arrow[d, "j'_*"{name=H}, bend left=25]& \textbf{1}\arrow{l}[swap]{j'^*\cF}\arrow[d, equal]\\
  \bD(Y) \arrow[u, "j^*"{name=G}, bend left=25]\arrow[phantom, from=F, to=G, "\dashv"]\arrow[r, "i'_!\simeq i'_*"]& \bD(W)\arrow[u, "j'^*"{name=I}, bend left=25]\arrow[phantom, from=H, to=I, "\dashv"]& \arrow{l}[swap]{j'_*j'^*\cF}\textbf{1}
\end{tikzcd},
    \]
    where the commutativity of the upwards-oriented lefthand square is the result of base change, and the commutativity of the upwards-oriented righthand square is the result of the triangle equivalence (the downwards-oriented squares are more immediately commutative).  This implies that the induced functor $\Hom_{\bD(Z)}(i_!, j'^*\cF)\to\Hom_{\bD(W)}(i'_!, j'_*j'^*\cF)$ is a right adjoint, and thus preserves limits.  In particular, it preserves terminal objects, which shows that for any $\cF\in\bD(W)$, the first triangle commutes, and thus the triangle commutes in the category of functors.  The second proof is dual to the first.
\end{proof}

\begin{lem}
Suppose that we have $\infty$-functors of the form $F, G:\cB\to\cC$ and $H, I:\cA\to\cB$ equipped with natural transformations $F\overset{\alpha}{\to}H$ and $G\overset{\beta}{\to}I$.  Then we have the commutative square
\[
\begin{tikzcd}
 F\circ G\arrow[r]\arrow[d] & F\circ I\arrow[d] \\
 H\circ G\arrow[r] & H\circ I
\end{tikzcd}.
\]
\end{lem}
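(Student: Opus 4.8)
The plan is to recognize this square as nothing but the interchange (``middle‑four'') law for whiskering of $\infty$‑natural transformations, and to deduce it from the functoriality of composition of $\infty$‑functors together with the fact that the walking commutative square really does commute. One clean way to set this up is to pass to quasicategories under $N^{hc}$ (legitimate by the proposition above and the Conventions, and in any case this is a purely formal statement handled model‑independently by \cite{RieVer}): for $\infty$‑categories $\cA,\cB,\cC$, composition of functors is then an honest map of simplicial sets
\[
\circ:\text{Fun}(\cB,\cC)\times\text{Fun}(\cA,\cB)\longrightarrow\text{Fun}(\cA,\cC),
\]
where here $F,H\colon\cB\to\cC$ carry $\alpha$ and $G,I\colon\cA\to\cB$ carry $\beta$, and the whiskerings appearing as the edges of the square are by definition the images under $\circ$ of the edges $(\alpha,\id)$ and $(\id,\beta)$ of the source.

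First I would assemble $\alpha$ and $\beta$ into a single square. Viewing $\alpha$ as an edge $\Delta^1\to\text{Fun}(\cB,\cC)$ and $\beta$ as an edge $\Delta^1\to\text{Fun}(\cA,\cB)$, their product is a map
\[
\alpha\times\beta:\Delta^1\times\Delta^1\longrightarrow\text{Fun}(\cB,\cC)\times\text{Fun}(\cA,\cB).
\]
Since $\Delta^1\times\Delta^1$ is the nerve of a poset, it is the walking commutative square, so every functor out of it lands on a genuinely commuting square. Postcomposing with $\circ$ therefore yields a commuting square
\[
\circ\circ(\alpha\times\beta):\Delta^1\times\Delta^1\longrightarrow\text{Fun}(\cA,\cC),
\]
whose four vertices are $F\circ G$, $H\circ G$, $F\circ I$, $H\circ I$ and whose four edges are precisely the whiskerings in the statement: the edge out of $(0,0)$ varying the first coordinate is $\alpha$ whiskered with $G$, the edge out of $(0,0)$ varying the second coordinate is $F$ whiskered with $\beta$, and the two remaining sides are $\alpha$ whiskered with $I$ and $H$ whiskered with $\beta$. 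Up to the harmless relabelling of rows versus columns, this is exactly the asserted square.

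The only point requiring care — and the sole substantive input — is that composition of $\infty$‑functors is genuinely (coherently, and in the $S$‑category model strictly) functorial, so that ``whisker, then whisker again'' produces a well‑defined commuting square rather than a square commuting only up to some unspecified zig‑zag; this is the same formal ingredient of \cite{RieVer} already invoked above for the composability of adjunctions. Beyond this bookkeeping I expect no obstacle, and no computation is needed.
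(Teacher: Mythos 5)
Your proof is correct and takes essentially the same approach as the paper: both recognize the square as the interchange (middle-four) law for $2$-morphisms in the $(\infty,2)$-category of $\infty$-categories, which the paper's proof invokes in a single line. Your version supplies the quasicategorical mechanics behind that fact (packaging $\alpha$ and $\beta$ into a map $\Delta^1\times\Delta^1\to\text{Fun}(\cB,\cC)\times\text{Fun}(\cA,\cB)$ and postcomposing with the composition functor), and along the way silently repairs the garbled source/target labelling of $F,G,H,I,\alpha,\beta$ in the lemma's statement.
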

\begin{proof}
This is just a restatement of the fact that $2$-morphisms compose horizontally in the $(\infty, 2)$-category of $\infty$-categories.
\end{proof}

\begin{lem}
Suppose we are in the situation of the last lemma, except that $\cA=\cB=\cC$, and all of our $\infty$-functors are $\infty$-autofunctors.  Let us further assume that all four of the endofunctors homotopy commute in a way that is compatible in the homotopy category.  Then we have a commutative cube
\[
    \begin{tikzcd}[row sep=1.5em, column sep = 1.5em]
    F\circ G \arrow[rr] \arrow[dr] \arrow[dd] &&
    F\circ I \arrow[dd] \arrow[dr] \\
    & H\circ G \arrow[rr]\arrow[dd] &&
    H\circ I \arrow[dd] \\
    G\circ F \arrow[rr] \arrow[dr] && I\circ F\arrow[dr] \\
    & G\circ H \arrow[rr] && I\circ H
    \end{tikzcd},
\]
where the vertical morphisms are simply the homotopy-compatible morphisms.  Furthermore, considered as an $\infty$-morphism of commutative squares, the diagram above is invertible.
\end{lem}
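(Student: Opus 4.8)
The plan is to realize the cube as a morphism between the two commutative squares supplied by the preceding lemma, with the vertical direction furnished by the assumed commutation equivalences $F\circ G\simeq G\circ F$, $F\circ I\simeq I\circ F$, $H\circ G\simeq G\circ H$, $H\circ I\simeq I\circ H$ (the ``homotopy-compatible morphisms'' of the statement). First I would produce the top and bottom faces. The top face is exactly the square of the preceding lemma applied with $\cA=\cB=\cC$: it records that the interchange (Godement) composite $F\circ G\to H\circ I$ is computed in either order, $(\alpha I)\circ(F\beta)\simeq(H\beta)\circ(\alpha G)$. The bottom face is the same lemma applied after exchanging the outer and inner slots --- $G$ (with $\beta$) placed outside and $F$ (with $\alpha$) inside --- giving the commuting square with edges $\beta F$, $I\alpha$, $G\alpha$, $\beta H$. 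So both horizontal faces of the cube commute with no additional work.

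Next I would identify the four side faces. Each has its two vertical edges among the commutation equivalences listed above and its two horizontal edges given by whiskerings of $\alpha$ or of $\beta$; commutativity of such a square is precisely the assertion that the relevant commutation equivalence is natural with respect to $\alpha$, respectively with respect to $\beta$. This is exactly what the hypothesis that the four endofunctors ``homotopy commute in a way that is compatible in the homotopy category'' provides --- these four naturality squares. With the two horizontal faces and the four side faces in hand, one assembles the cube by noting that a homotopy-coherent diagram on the $3$-cube $[1]^{3}\cong[1]^{2}\times[1]$ valued in $\mathrm{Fun}(\cC,\cC)$ is the same datum as a morphism in $\mathrm{Fun}([1]^{2},\mathrm{Fun}(\cC,\cC))$ from the top square to the bottom square, and such a morphism consists of its four vertex components --- here the four commutation equivalences --- together with the naturality squares along the four edges of $[1]^{2}$, which are the four side faces. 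Hence the cube exists and commutes.

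For the final clause, the cube regarded as an $\infty$-morphism of commutative squares has all four of its components equivalences (the commutation data are equivalences by assumption), and equivalences between diagrams are detected pointwise, so the morphism of squares is itself an equivalence, with inverse the cube built from the inverted commutation equivalences. The step I expect to be the real obstacle is the coherent assembly in the second paragraph: turning six pairwise-commuting faces into one homotopy-coherent cube requires the naturality data to cohere at the next level, and it is the phrase ``compatible in the homotopy category'' that is meant to license this. In the applications that follow, the commutation equivalences arise from base-change isomorphisms internal to a six functor formalism, which come packaged with their higher coherences, so no extra bookkeeping is needed there; in the abstract statement this coherence is simply an input.
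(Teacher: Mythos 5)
Your approach is genuinely different from the paper's, and the step you flag as "the real obstacle" is in fact a gap the paper's argument is specifically designed to close. You attempt to build the cube face by face: top and bottom from the preceding lemma, four side faces as naturality squares for the commutation equivalences, and then assemble. But as you yourself observe, a morphism in $\mathrm{Fun}([1]^2,\mathrm{Fun}(\cC,\cC))$ — equivalently an object of $\mathrm{Fun}([1]^3,\mathrm{Fun}(\cC,\cC))$ — is strictly more data than four vertex components plus four edge naturality squares: one also needs a $3$-cell filling the interior of the cube, i.e.\ a homotopy between the two composite $2$-cells around the cube. Merely knowing that each of the six faces commutes up to homotopy, even with specified $2$-cells, does not by itself produce this filler, and the informal hypothesis that the commutations are "compatible in the homotopy category" is a $1$-categorical statement that does not directly supply coherent $3$-dimensional data. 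Saying that in the eventual applications the coherences "come packaged" is a fair remark about the use cases, but it leaves the abstract lemma unproved.

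The paper closes exactly this gap by a lifting argument rather than a face-by-face assembly. It uses the fact (Riehl--Verity) that $\Ho(\cC^{\Delta[1]^n})\to\Ho(\cC)^{\Delta[1]^n}$ is surjective on objects: one first checks, purely $1$-categorically, that the cube commutes in the homotopy category — which is precisely what the preceding lemma together with the "compatible in the homotopy category" hypothesis gives — and then invokes surjectivity to lift this to an object of $\cC^{\Delta[1]^3}$, i.e.\ a genuine homotopy-coherent cube, with no need to exhibit or assemble the higher cells by hand. For the invertibility clause, your pointwise-detection argument is correct and a bit more direct than the paper's route (which factors through the interval groupoid $J$, deducing surjectivity of $\Ho(\cC^J)\to\Ho(\cC)^J$ and then filling the cube in $(\cC^{\Delta[1]^2})^J$); the paper's version does, however, also record uniqueness of the filler up to contractible choice, which your version does not address. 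The constructive conclusion is that you should replace the assembly step in your second paragraph with the lifting argument: check commutativity at the level of the homotopy $1$-category, then lift, rather than trying to cohere the faces directly.
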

\begin{proof}
This proof will make heavy use of the fact that given an $\infty$-category $\cC$, the functor $\Ho(\cC^{\Delta[1]})\to\Ho(\cC)^{\Delta[1]}$ is surjective on objects, full, and conservative (\cite{RieVer} 3.1.1).  In particular, this further implies that for any $n$, $\Ho(\cC^{\Delta[1]^n})\to\Ho(\cC)^{\Delta[1]^n}$ is surjective on objects, full, and conservative as well.

Now, consider the category of cubes $\cC^{\Delta[1]^3}\simeq(\cC^{\Delta[1]})^{\Delta[1]^2}\simeq(\cC^{\Delta[1]^2})^{\Delta[1]}$.  The primary characterization of cubes that we will make use of is as arrows in commutative squares, or the middle term in the above string of equivalences.  By our assumptions, we see that the cube written in the lemma description above commutes when considered in the homotopy category $(\Ho(\cC)^{\Delta[1]^2})^{\Delta[1]}$.  Since the canonical functor $\Ho(\cC^{\Delta[1]^3})\to\Ho(\cC)^{\Delta[1]^3}$ is surjective on objects, one has that there must exist a commutative diagram $D\in\Ho(\cC^{\Delta[1]^3})$ (in other words, an object of $\cC^{\Delta[1]^3}$) filling the commutative cube, which settles the first part of the lemma.

Let $J$ denote the interval groupoid and consider an inclusion $\Delta[1]\hookrightarrow J$.  This induces for any $\infty$-category $\cC$ a map $\cC^J\to\cC^{\Delta[1]}$ whose essential image is the subcategory of equivalences in $\cC$. Furthermore, the map from $\cC^J$ onto its essential image is essentially surjective, as a morphism in an $\infty$-category is an equivalence if and only if it can be lifted to a map from $J$ to $\cC$.  (Indeed, it is surjective on objects, in spite of this not being a homotopy-invariant notion.)  In particular, on $1$-categories, this is the inclusion of a subcategory since inverses to equivalences are uniquely defined in $1$-categories.  Now, we have the commutative square 
\[
\begin{tikzcd}
 \Ho(\cC^{\Delta[1]})\arrow[r] & \Ho(\cC)^{\Delta[1]}\\
 \Ho(\cC^J)\arrow[u]\arrow[r] & \Ho(\cC)^J\arrow[u]
\end{tikzcd}.
\]
The bottom morphism is conservative by [R-V].  We wish to show that it is surjective on objects as well.  Note that both the left and right vertical morphism are surjective on objects.  Note that the vertical morphisms precisely describe those morphisms, either in the underlying $\infty$-category or in the homotopy category, which are invertible.  Furthermore, the map restricted to these subcategories must also be surjective on objects.  Hence, one must have that 
\[
\Ho(\cC^J)\to\Ho(\cC)^J
\]
is surjective on objects as well.  Now, we note that there is a string of morphisms
\[
\Ho((\cC^J)^{\Delta[1]^2})\to\Ho(\cC^J)^{\Delta[1]^2}\to(\Ho(\cC)^J)^{\Delta[1]^2}
\]
which are surjective on objects.  Hence, it must be the case that our cube above admits a filling to $(\cC^J)^{\Delta[1]^2}$.  Furthermore, since $(\cC^J)^{\Delta[1]^2}\simeq(\cC^{\Delta[1]^2})^J$, our cube fills to an equivalence of commutative squares in $\cC$.  Finally, this filling must be unique up to a contractible choice since all of this is done ambiently in the $(\infty, 2)$-category of $\infty$-categories.
\end{proof}

\begin{prop}
Suppose one has a cartesian square of schemes of the form
 \[
    \begin{tikzcd}
 X\arrow[d, hook, "i"]\arrow[r, hook, "j"]\arrow[dr, phantom, "\lrcorner", very near start] & Y \arrow[d, hook, "i'"]\\
  Z \arrow[r, hook, "j'"]& W
\end{tikzcd},
    \]
    with $i$ and $i'$ closed immersions and $j$ and $j'$ open immersions.  Then the square 
    \[
    \begin{tikzcd}
 j'_*i_!i^!j'^*\simeq i'_!j_*j^*i'^!\arrow{d}[swap]{\epsilon_i^!}& i'_!i'^! \arrow{l}[swap]{\eta_j^*} \arrow[d, "\epsilon_{i'}^!"]\\
  j'_*j'^* & \id\arrow[l, "\eta_{j'}^*"]
\end{tikzcd}
    \]
    in $\bD(W)$ commutes.
\end{prop}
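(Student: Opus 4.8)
The plan is to exhibit the square as the outer rectangle of a pasting of the two triangles from the preceding Proposition, with the leftover coherence handled by the commutative-cube lemma proved above. Work with endofunctors of $\bD(W)$ and write $A := j'_*i_!i^!j'^* \simeq i'_!j_*j^*i'^!$ for the top-left corner. The two presentations of $A$, the isomorphism $\Theta\colon i'_!i'^!j'_*j'^* \simeq j'_*i_!i^!j'^*$ appearing in Triangle 1, and the isomorphism $\phi\colon i'_!j_*j^*i'^! \simeq j'_*j'^*i'_!i'^!$ appearing in Triangle 2 are all assembled from the base change equivalences $j^*i'^! \simeq i^!j'^*$, $j'^*i'_! \simeq i_!j^*$, $i'^!j'_* \simeq j_*i^!$ supplied by the base change theorems above, together with the tautology $i'_!j_* = (i'j)_* = (j'i)_* = j'_*i_!$.

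First I reduce to a single auxiliary square. Consider
\[
\begin{tikzcd}
i'_!i'^! \arrow[r, "\eta_j^*"] \arrow[d, "i'_!i'^!(\eta_{j'}^*)"'] & i'_!j_*j^*i'^! \arrow[d, "\Psi"] \\
i'_!i'^!j'_*j'^* \arrow[r, "\Theta"'] \arrow[d, "\epsilon_{i'}^!"'] & j'_*i_!i^!j'^* \arrow[d, "\epsilon_i^!"] \\
j'_*j'^* \arrow[r, equal] & j'_*j'^*
\end{tikzcd}
\]
in which $\Psi$ denotes the chosen identification of the two forms of $A$. The lower square is exactly Triangle 1 of the preceding Proposition (a degenerate square with identity bottom edge). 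The right-hand column, read under $A \simeq j'_*i_!i^!j'^*$, is the left edge $\epsilon_i^!$ of the target square, and the top edge $\eta_j^*$ is its top edge; the left-hand column $\epsilon_{i'}^! \circ i'_!i'^!(\eta_{j'}^*)$ equals the horizontal composite $\eta_{j'}^* \circ \epsilon_{i'}^!$ by the interchange law for horizontal composition of $2$-cells (the lemma on horizontal composition above, applied with $\alpha = \epsilon_{i'}^!$ and $\beta = \eta_{j'}^*$), which is the other leg $i'_!i'^! \to \id \to j'_*j'^*$ of the target square. Thus the target square commutes as soon as the upper square does.

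It remains to prove the upper square commutes, i.e. $\Psi \circ \eta_j^* \simeq \Theta \circ i'_!i'^!(\eta_{j'}^*)$. Triangle 2 of the preceding Proposition gives $(\eta_{j'}^*)_{i'_!i'^!} \simeq \phi \circ \eta_j^*$, whence $\Psi \circ \eta_j^* \simeq (\Psi\phi^{-1}) \circ (\eta_{j'}^*)_{i'_!i'^!}$; comparing with $\Theta \circ i'_!i'^!(\eta_{j'}^*)$, what must be shown is that the canonical commutation isomorphism $\rho\colon i'_!i'^!j'_*j'^* \simeq j'_*j'^*i'_!i'^!$ (namely $(\Psi\phi^{-1})^{-1}\Theta$) intertwines the right-whiskered unit $i'_!i'^!(\eta_{j'}^*)$ with the left-whiskered unit $(\eta_{j'}^*)_{i'_!i'^!}$. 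This is the only genuine obstacle: a compatibility among the Beck--Chevalley $2$-cells of the six-functors formalism, each trivial in isolation but needing to be organized coherently. I would finish by gathering the pertinent faces -- the naturality squares of $\eta_{j'}^*$ against those base change cells, and the triangle identities for $j'^* \dashv j'_*$ -- into a commutative cube, verifying it in the homotopy category $\Ho(\bD(W))$, where every base change transformation is an honest isomorphism and all identities hold on the nose, and then invoking the commutative-cube lemma above to lift the filling uniquely (up to contractible choice) to $\bD(W)$. Alternatively, mirroring the proof of the preceding Proposition, this coherence can be packaged as the preservation of a terminal object by an adjoint functor between the comma $\infty$-categories attached to a suitable adjunction of cospans.
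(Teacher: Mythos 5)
Your argument is correct and follows essentially the same path as the paper's proof: both reduce the target square to the two triangles from the preceding proposition and the interchange (horizontal composition) lemma, and both isolate the residual coherence among the base change equivalences connecting $i'_!i'^!j'_*j'^*$ and $j'_*j'^*i'_!i'^!$ as a commutative cube to be filled via the cube lemma. The only difference is organizational: you paste the two triangles into a vertical rectangle and reduce to a single auxiliary square, whereas the paper exhibits the two whiskered versions of the target square outright and joins them by the cube; the key lemmas invoked and the final appeal to the homotopy-category check plus the cube-filling lemma are the same.
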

\begin{proof}
Let us begin by noting that due to the various forms of base change, we have the following string of equivalences in $\bD(W)$ which we can employ:
\[
j'_*i_!i^!j'^*\simeq i'_!j_*j^*i'^!\simeq i'_!i'^!j'_*j'^*\simeq j'_*j'^*i'_!i'^!.
\]
This, combined with the commutativity of the triangles of proposition 3.29 allows us to boil the proof down to noting the commutativity of the squares
\[
\begin{tikzcd}
 i'_!i'^!j'_*j'^*\arrow{d}[swap]{\epsilon_{i'}^!}& i'_!i'^! \arrow{l}[swap]{\eta_j^*} \arrow[d, "\epsilon_{i'}^!"]\\
  j'_*j'^* & \id\arrow[l, "\eta_{j'}^*"]
\end{tikzcd}
\text{ and }
\begin{tikzcd}
 j'_*j'^*i'_!i'^!\arrow{d}[swap]{\epsilon_i^!}& i'_!i'^! \arrow{l}[swap]{\eta_j^*} \arrow[d, "\epsilon_{i'}^!"]\\
  j'_*j'^* & \id\arrow[l, "\eta_{j'}^*"]
\end{tikzcd}
\]
by lemma 3.30 and then demonstrating that these two squares are connected by isomorphisms which yield a commutative cube.  This last step is precisely what we are left with at this point.  
Indeed, we have the cube
\[
    \begin{tikzcd}[row sep=1.5em, column sep = 1.5em]
    i'_!i'^!\circ\id \arrow[rr] \arrow[dr] \arrow[dd] &&
    i'_!i'^!\circ j'_*j'^* \arrow[dd] \arrow[dr] \\
    & \id\circ\id \arrow[rr]\arrow[dd, equal] &&
    \id\circ j'_*j'^* \arrow[dd] \\
    \id\circ i'_!i'^! \arrow[rr] \arrow[dr] && j'_*j'^*\circ i'_!i'^!\arrow[dr] \\
    & \id\circ\id \arrow[rr] && j'_*j'^*\circ \id
    \end{tikzcd},
\]
which must be (homotopy) commutative by the preceding lemma.
\end{proof}
\begin{cor}
Given a cartesian square of $S$-schemes
\[
    \begin{tikzcd}
 X\arrow[dd, hook, "i"]\arrow[rr, "j"] \arrow[dr, "f_X"]& & Y \arrow[dd, hook , "i'"]\arrow[dl, "f_Y"]\\
& S &\\
  Z \arrow[rr, "j'"]\arrow[ur, "f_Z"] & & W \arrow[ul, "f_W"]
\end{tikzcd}
\]
where the horizontal morphisms are open immersions and the vertical morphisms are closed immersions, one has that the square
\[
    \begin{tikzcd}
 {f_X}_*f_X^!\arrow{d}[swap]{\epsilon_i^!}& {f_Y}_*f_Y^! \arrow{l}[swap]{\eta_j^*} \arrow[d, "\epsilon_{i'}^!"]\\
  {f_Z}_*f_Z^! & {f_W}_*f_W^!\arrow[l, "\eta_{j'}^*"]
\end{tikzcd}
    \]
    in $\bD(S)$ commutes.
\end{cor}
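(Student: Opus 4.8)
The plan is to obtain the square in the statement by whiskering the square of the preceding proposition on the right by $f_W^!$ and on the left by $f_{W*}$. Write $\Phi$ for this operation: it sends an endofunctor $T$ of $\bD(W)$ to $f_{W*}\circ T\circ f_W^!\colon\bD(S)\to\bD(S)$ and a natural transformation $\alpha$ to $f_{W*}\,\alpha\, f_W^!$. Since whiskering a diagram of natural transformations by fixed functors on either side is $2$-functorial, $\Phi$ carries commutative squares of natural transformations to commutative squares of natural transformations. So the entire content of the proof is to identify $\Phi$ applied to the square of the preceding proposition (which lives in $\End(\bD(W))$) with the square of the corollary (which lives in $\End(\bD(S))$), after which commutativity is automatic.

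The identification of corners uses only the factorizations $f_X=f_W\circ j'\circ i=f_W\circ i'\circ j$, $f_Y=f_W\circ i'$, $f_Z=f_W\circ j'$, together with the canonical equivalences $i_!\simeq i_*$, $i'_!\simeq i'_*$ for the closed immersions (properness) and $j^!\simeq j^*$, $j'^!\simeq j'^*$ for the open immersions. From these one reads off $\Phi(\id)=f_{W*}f_W^!$, $\Phi(i'_!i'^!)\simeq f_{Y*}f_Y^!$, $\Phi(j'_*j'^*)\simeq f_{Z*}f_Z^!$, and $\Phi(j'_*i_!i^!j'^*)\simeq f_{X*}f_X^!$ (the alternative expression $i'_!j_*j^*i'^!$ for the top-left corner gives the same answer via the factorization $f_X=f_W\circ i'\circ j$). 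For the edges, whiskering the counit $i_!i^!\Rightarrow\id$ by $f_{Z*}(-)f_Z^!$ produces the map labelled $\epsilon_i^!$ in the corollary, since $f_{W*}j'_*=f_{Z*}$ and $j'^*f_W^!\simeq f_Z^!$; whiskering $i'_!i'^!\Rightarrow\id$ by $f_{W*}(-)f_W^!$ produces $\epsilon_{i'}^!$; whiskering the unit $\id\Rightarrow j_*j^*$ by $f_{Y*}(-)f_Y^!$ produces $\eta_j^*$, since $f_{Y*}j_*=f_{X*}$ and $j^*f_Y^!\simeq f_X^!$; and whiskering $\id\Rightarrow j'_*j'^*$ by $f_{W*}(-)f_W^!$ produces $\eta_{j'}^*$. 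Hence the whiskered square is literally the corollary's square, up to the canonical coherence isomorphisms.

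The one point demanding care — the expected main obstacle — is that the base-change equivalence $j'_*i_!i^!j'^*\simeq i'_!j_*j^*i'^!$ furnished by the preceding proposition must agree with the equivalence obtained by identifying both sides with $f_{X*}f_X^!$ through the two factorizations of $f_X$. This is a coherence assertion about the six-functor formalism rather than a calculation; it holds because all the natural transformations in play (the proper and finite-type base-change maps, together with the identifications $i_!\simeq i_*$ and $j^!\simeq j^*$) are the canonical ones, hence compatible with composition of morphisms of schemes, exactly as packaged in \cite{Khan}. Granting this coherence, no further work is required: $\Phi$ transports the commuting square of the preceding proposition to the commuting square of the corollary.
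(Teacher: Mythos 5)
Your proof is correct and follows essentially the same approach as the paper: the paper's one-line proof also obtains the square by whiskering the square of the preceding proposition by $f_W^!$ on the right and by $f_{W*}$ on the left. You simply spell out the corner and edge identifications (and flag the coherence point about the two factorizations of $f_X$) in more detail than the paper does.
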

\begin{proof}
This is simply the commutative square of the preceding lemma after precomposition with $f_W^!$ and postcomposition with ${f_W}_*$.
\end{proof}

\begin{prop}
Given a commutative diagram of finite type $S$-schemes
\[
\begin{tikzcd}
Z \arrow[r, hook, "i"] \arrow[dr, "g"]
    & X \arrow[d, "f"] & U \arrow{l}{\circ}[swap]{j} \arrow[dl, "h"]\\
&S \end{tikzcd},
\]
where $i$ is a closed immersion, and $j$ is its complementary open immersion, one obtains a natural cofiber sequence
\[
g_*g^!\to f_*f^!\to h_*h^!.
\]
\end{prop}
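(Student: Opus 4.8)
The plan is to obtain the asserted cofiber sequence by whiskering the localization cofiber sequence over $X$ by the operations attached to $f$, and then identifying the two outer terms through the compatibility of the exceptional operations with composition.

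First I would invoke the consequence of localization recorded above (the Proposition recalling \cite{Khan} remark 2.9): since $\bD$ is motivic and $Z\overset{i}{\hookrightarrow}X\overset{j}{\hookleftarrow}U$ exhibits $i$ as the closed immersion complementary to the open immersion $j$, there is a canonical cofiber sequence of endofunctors of $\bD(X)$
\[
i_*i^!\xrightarrow{\ \epsilon_i^!\ }\id\xrightarrow{\ \eta_j^*\ }j_*j^*
\]
(using $i_!\simeq i_*$ for the closed immersion $i$, so that the first map is the counit of $i_!\dashv i^!$). Next I would whisker this sequence on the left by $f_*$ and on the right by $f^!$. Precomposition with any functor preserves cofiber sequences, and $f_*$, being a right adjoint, preserves all limits; since in a stable $\infty$-category a fiber sequence is the same as a cofiber sequence, $f_*$ preserves cofiber sequences. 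Hence one gets a cofiber sequence of functors $\bD(S)\to\bD(S)$
\[
f_*i_*i^!f^!\xrightarrow{\ f_*\epsilon_i^!f^!\ }f_*f^!\xrightarrow{\ f_*\eta_j^*f^!\ }f_*j_*j^*f^!.
\]

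I would then identify the outer terms. For the left term, $f\circ i=g$ together with the functoriality of the direct image and of the exceptional inverse image gives $f_*i_*\simeq g_*$ and $i^!f^!\simeq g^!$, so $f_*i_*i^!f^!\simeq g_*g^!$, and under these equivalences the whiskered counit $f_*\epsilon_i^!f^!$ corresponds to $\epsilon_g^!$ by the coherent composability of $\infty$-categorical adjunctions (cf.\ \cite{RieVer} and the Proposition above on composing (co)units). For the right term, $f\circ j=h$ gives $f_*j_*\simeq h_*$; moreover $j$ is an open immersion, so the canonical comparison $j^!\to j^*$ is an equivalence (the identification $f^!\simeq f^*$ for open immersions established above), whence $j^*f^!\simeq j^!f^!\simeq h^!$ and therefore $f_*j_*j^*f^!\simeq h_*h^!$, with the whiskered unit $f_*\eta_j^*f^!$ going over to $\eta_h^*$. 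Substituting these identifications yields the natural cofiber sequence
\[
g_*g^!\to f_*f^!\to h_*h^!
\]
of functors $\bD(S)\to\bD(S)$.

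The only point that demands any care --- and it is routine rather than genuinely hard --- is checking that these three canonical identifications are mutually compatible, i.e.\ that after fixing the composition isomorphisms for the adjunctions $(i_!\dashv i^!)$, $(f_!\dashv f^!)$, $(f^*\dashv f_*)$ and the equivalence $j^!\simeq j^*$, the whiskered transformations $f_*\epsilon_i^!f^!$ and $f_*\eta_j^*f^!$ really are $\epsilon_g^!$ and $\eta_h^*$. This is exactly the statement that the relevant adjunctions compose coherently in the ambient $(\infty,2)$-category of $\infty$-categories, so beyond the localization sequence and the basic functoriality of the six operations no new input is required.
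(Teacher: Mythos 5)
Your proof is correct and follows essentially the same route as the paper: invoke the localization cofiber sequence $i_*i^!\to\id\to j_*j^*$, whisker by $f_*(-)f^!$ using stability to see that $f_*$ preserves cofiber sequences, and identify the outer terms via $f\circ i=g$, $f\circ j=h$, and $j^!\simeq j^*$. The only cosmetic difference is that the paper obtains $j^!\simeq j^*$ via relative purity and vanishing of $\Omega_j$ for the étale map $j$, whereas you cite the identification $f^!\simeq f^*$ for open immersions directly; both are equivalent.
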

\begin{proof}
Consider a commutative diagram of finite type $S$-schemes (where the base $S$ is also of finite type)
\[
\begin{tikzcd}
Z \arrow[r, hook, "i"] \arrow[dr, "g"]
    & X \arrow[d, "f"] & U \arrow{l}{\circ}[swap]{j} \arrow[dl, "h"]\\
&S \end{tikzcd},
\]
where $i$ is a closed immersion and $j$ is its complementary open immersion.  Note that since $\bD$ satisfies the six functors formalism, in particular, it satisfies the localization property.  Thus, considering $Z\overset{i}{\hookrightarrow} X\overset{j}{\underset{\circ}{\leftarrow}} U$ as absolute finite type schemes, one has the natural cofiber sequence $i_!i^!\to\id\to j_*j^*$.  

Now, since $i$ is a closed immersion, it is in particular proper, so we have that $i_!\simeq i_*$.  Furthermore, since $j$ is an open immersion, it is smooth, so one has that $j^*\simeq\Sigma^{\Omega_j}\circ j^!$.  Since $j$ is \'{e}tale, $\Omega_j=0$, so $\Sigma^{\Omega_j}$ is equivalent to the identity, and we are left with $j^*\simeq j^!$.

Composing with the appropriate direction of the above equivalences, we get the cofiber sequence 
\[
i_*i^!\to\id\to j_*j^!.
\]
Now, precomposing with $f^!$, we obtain a cofiber sequence 
\[
i_*i^!f^!\to f^!\to j_*j^!f^!.
\]
Since $f_*$ preserves finite limits and any (co)cartesian square in a stable $\infty$-category is bicartesian, we have that postcomposition with $f_*$ yields a cofiber sequence 
\[
f_*i_*i^!f^!\to f_*f^!\to f_*j_*j^!f^!.
\]
Finally, noting that $f_*i_*i^!f^!\simeq (f\circ i)_*(f\circ i)^!=g_*g^!$ and that $f_*j_*j^!f^!\simeq (f\circ j)_*(f\circ j)^!=h_*h^!$, this reduces to the cofiber sequence
\[
g_*g^!\to f_*f^!\to h_*h^!.
\]
\end{proof}

\begin{thm}
Suppose that $\bD$ is constructibly generated and satisfies one of the following two sets of conditions:
\begin{itemize}
    \item The four functors preserve constructible objects when given input a seperable morphism of finite type (note that compactness is trivially preserved by tensor)
    \item The six functors preserve constructible objects over Noetherian quasi-excellent schemes  of finite dimension with respect to morphisms of finite type (in other words, for any finite type morphism $f:X\to S$ with target Noetherian quasi-excellent of finite dimension, the four functors preserve constructible objects, while tensor and $\Hom$ preserve constructible objects over $S$)
\end{itemize}  
Then, given a scheme $S$ (assumed to be Noetherian quasi-excellent of finite dimension if $\bD$ satisfies the second condition in particular), there is a weakly W-exact functor \[M^c_{\bD(S)}:\Var_S\to\bD_{\cons}(S)\]
sending each variety (smooth or otherwise) $(X\overset{f}{\to}S)\in\Var_S$ to $M^c_{\bD(S)}(X):=f_*f^!\mathbf{1}_S$.
\end{thm}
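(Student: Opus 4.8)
The plan is to exhibit the weakly W-exact structure on $M^c_{\bD(S)}$ as a triple $(M^c_!,(M^c)^!,M^c_w)$ and to check the axioms one by one, the point being that essentially all of the analytic content has already been packaged into the base-change, purity and localization results proved above.

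First I would set $M^c_{\bD(S)}(X):=f_*f^!\mathbf 1_S$ for a variety $(X\overset{f}{\to}S)$, and check it lies in $\bD_{\cons}(S)$: the morphism $f$ is separated of finite type, so $f^!$ sends the generator $\mathbf 1_S$ to a constructible object, and then $f_*$ preserves constructibility — under the first hypothesis because $f$ is separated of finite type, under the second because $S$ is Noetherian quasi-excellent of finite dimension and $f$ is of finite type. Since $\bD_{\cons}(S)$ is a thick subcategory of the stable $\infty$-category $\bD(S)$ it is itself stable, hence pointed and finitely homotopy cocomplete, and it is essentially small, so it is a legitimate target. I would then define the structure functors:
\begin{itemize}
\item on a closed immersion $\iota:W\hookrightarrow X$ over $S$ with $g=f\iota$, set $M^c_!(\iota):=f_*(\epsilon^!_\iota)f^!$, where $\epsilon^!_\iota:\iota_!\iota^!\to\id$ is the counit, using $\iota_!\simeq\iota_*$ (as $\iota$ is proper) and $f_*\iota_*\iota^!f^!\simeq g_*g^!$;
\item on an open immersion $k:U\hookrightarrow X$ over $S$ with $h=fk$, set $(M^c)^!(k):=f_*(\eta^*_k)f^!$, where $\eta^*_k:\id\to k_*k^*$ is the unit, using $k^*\simeq k^!$ (as $k$ is étale) and $f_*k_*k^!f^!\simeq h_*h^!$;
\item on an isomorphism $\phi:X\overset{\sim}{\to}X'$ over $S$, let $M^c_w(\phi)$ be the equivalence $f_*f^!\mathbf 1_S\simeq f'_*f'^!\mathbf 1_S$ coming from $\phi_*\phi^!\simeq\phi_*\phi^*\simeq\id$.
\end{itemize}
These agree on objects by construction.

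The remaining axioms then follow from the results already established. Functoriality of $M^c_!$ (resp. $(M^c)^!$) is supplied by the corollary giving commutative triangles $g_*g^!\to f_*f^!\to h_*h^!$ for composites of closed (resp. open) immersions over $S$, combined with the coherence of composition of $\infty$-categorical adjunctions, which upgrades the levelwise statement to a genuine functor out of the discrete category $\cof(\Var_S)$ (resp. $\comp(\Var_S)^{op}$). The cartesian-square axiom is exactly the corollary on the commuting square in $\bD(S)$ obtained from a cartesian square with closed and open legs by precomposing with $f_W^!$ and postcomposing with $f_{W*}$, after identifying that corollary's closed immersions with the cofibrations of the square and its open immersions with the complements. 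The subtraction-sequence axiom — that for a closed immersion $i:Z\hookrightarrow X$ with complementary open immersion $j:U\hookrightarrow X$ over $S$ the sequence $g_*g^!\mathbf 1_S\to f_*f^!\mathbf 1_S\to h_*h^!\mathbf 1_S$ with first map $M^c_!(i)$ and second map $(M^c)^!(j)$ is a cofiber sequence — is the proposition producing the natural cofiber sequence $g_*g^!\to f_*f^!\to h_*h^!$, once one observes (directly from that proposition's proof) that its two maps are exactly $M^c_!(i)$ and $(M^c)^!(j)$, both obtained from the localization cofiber sequence $i_*i^!\to\id\to j_*j^!$ by the sandwich $f_*(-)f^!$. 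Finally, the two weak-equivalence compatibility squares follow from naturality of units and counits together with compatibility of $\iota_!\simeq\iota_*$, $k^!\simeq k^*$, $\phi_*\phi^*\simeq\id$ with composition; since weak equivalences in $\Var_S$ are isomorphisms, they amount to $M^c_!$ and $(M^c)^!$ restricting to $M^c_w$ on isomorphisms.

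The hard part will be none of the individual computations — those are the cited base-change and localization facts — but the coherence bookkeeping: verifying that the diagrams furnished by the earlier corollaries really are the structure diagrams demanded by the definition of a weakly W-exact functor once everything is transported across the canonical equivalences $\iota_!\simeq\iota_*$ and $k^!\simeq k^*$, and that these equivalences are natural enough to promote the levelwise statements to genuine $\infty$-functoriality and genuine homotopy-(co)cartesianness. With that in hand, the conclusion is immediate.
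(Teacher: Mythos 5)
Your proposal matches the paper's proof in both strategy and in the specific supporting results invoked: the structure functors on $\cof(\Var_S)$, $\comp(\Var_S)^{op}$, and $w(\Var_S)$ are built from $\epsilon^!_i$, $\eta^*_j$, and the restriction to isomorphisms exactly as the paper does, functoriality comes from the corollary on commutative triangles for composites of closed (resp.\ open) immersions, the cartesian-square axiom from the corollary on commuting squares in $\bD(S)$, and the subtraction-to-cofiber-sequence axiom from the proposition giving $g_*g^!\to f_*f^!\to h_*h^!$. You are a bit more explicit than the paper in two useful places — checking that $f_*f^!\mathbf 1_S$ actually lands in $\bD_{\cons}(S)$ under each hypothesis, and flagging the coherence bookkeeping needed to promote the levelwise statements to genuine $\infty$-functoriality across the identifications $i_!\simeq i_*$ and $j^!\simeq j^*$ — but the route is the same.
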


\begin{proof}
Assembling the various lemmae and propositions that we have proven above, we are equipped to show the following:
\begin{itemize}
    \item We have a covariant functor $M^c_{\bD(S)}:\cof(\Var_S)\to\bD_{\cons}(S)$ given by sending objects $X$ to $M^c_{\bD(S)}(X)$, and morphisms $Z\overset{i}{\hookrightarrow}X$ to $M^c_{\bD(S)}(Z)\overset{\epsilon_i^!(\mathbf1_S)}{\longrightarrow}M^c_{\bD(S)}(X)$, where functoriality comes from evaluating the left triangle in the statement of corollary 3.26 at $\mathbf{1}_S$
    \item We have a contravariant functor $M^c_{\bD(S)}:\comp(\Var_S)\to\bD_{\cons}(S)$ given by sending objects $X$ to $M^c_{\bD(S)}(X)$, and morphisms $U\overset{j}{\hookrightarrow}X$ to $M^c_{\bD(S)}(X)\overset{\eta_j^*(\mathbf1_S)}{\longrightarrow}M^c_{\bD(S)}(U)$, where functoriality comes from evaluating the right triangle in the statement of corollary 3.26 at $\mathbf{1}_S$
    \item Since weak equivalences in $\Var_S$ are simply isomorphisms, and isomorphisms are in particular closed immersions, we obtain our functor $M^c_{\bD(S)}:w(\Var_S)\to\bD_{\cons}(S)$ by restricting the one we already have on closed immersions.  Note that isomorphisms must map to weak equivalences in $\bD_{\cons}(S)$
    \item Consider a cartesian square of $S$-motives
    \[
    \begin{tikzcd}
    X\arrow[dd, hook, "i"]\arrow[rr, "j"] \arrow[dr, "f_X"]& & Y \arrow[dd, hook, "i'"]\arrow[dl, "f_Y"]\\
    & S &\\
      Z \arrow[rr, "j'"]\arrow[ur, "f_Z"] & & W \arrow[ul, "f_W"]
    \end{tikzcd}
    \]
    where the horizontal morphisms are open immersions and the vertical morphisms are closed immersions.  By corollary 3.33, we have the commutative square 
    \[
    \begin{tikzcd}
    {f_X}_*f_X^!\arrow{d}[swap]{\epsilon_i^!}& {f_Y}_*f_Y^! \arrow{l}[swap]{\eta_j^*} \arrow[d, "\epsilon_{i'}^!"]\\
     {f_Z}_*f_Z^! & {f_W}_*f_W^!\arrow[l, "\eta_{j'}^*"]
    \end{tikzcd},
    \]
    Which, upon evaluation at $\mathbf1_S$, in turn yields the commutative square
    \[
    \begin{tikzcd}
    M^c_{\bD(S)}(X)\arrow{d}[swap]{\epsilon_i^!}& M^c_{\bD(S)}(Y) \arrow{l}[swap]{\eta_j^*} \arrow[d, "\epsilon_{i'}^!"]\\
     M^c_{\bD(S)}(Z) & M^c_{\bD(S)}(W)\arrow[l, "\eta_{j'}^*"]
    \end{tikzcd}
    \]
    \item Given a closed/open decomposition
    \[
    \begin{tikzcd}
    Z \arrow[r, hook, "i"] \arrow[dr, "g"]
    & X \arrow[d, "f"] & U \arrow{l}{\circ}[swap]{j} \arrow[dl, "h"]\\
    &S \end{tikzcd}
    \]
    of an $S$-variety $X$, composing the natural cofiber sequence 
    \[
    g_*g^!\to f_*f^!\to h_*h^!
    \]
    of proposition 3.34 with $\mathbf1_S$ yields the cofiber sequence
    \[
    M^c_{\bD(S)}(Z)\to M^c_{\bD(S)}(X)\to M^c_{\bD(S)}(U)
    \]
    \item Of the last two conditions we need to check, the one on cofibrations follows from the fact that the two compositions around the diagram of $S$-varieties
     \[
    \begin{tikzcd}
    X\arrow[dd, hook, "i"]\arrow[rr, "\sim"] \arrow[dr, "f_X"]& & Y \arrow[dd, hook, "i'"]\arrow[dl, "f_Y"]\\
    & S &\\
      Z \arrow[rr, "\sim"]\arrow[ur, "f_Z"] & & W \arrow[ul, "f_W"]
    \end{tikzcd}
    \]
    (where the vertical morphisms are closed immersions and the horizonal morphisms are isomorphisms) coincide, and by the left triangle of corollary 3.26.  The condition on complements is a special case of the fourth bullet point of this proof, as all isomorphisms are necessarily cofibrations.
    
\end{itemize}
\end{proof}
\begin{cor}
Suppose $\bD$ sastisfies one of the two conditions of the above theorem.  Then, given a scheme $S$ (assumed to be Noetherian quasi-excellent of finite dimension if $\bD$ satisfies the second set of conditions), one obtains a map of K-theory spectra 
\[K(M^c_{\bD(S)}):K(\Var_S)\to K(\bD_{\cons}(S)).\]
\end{cor}
\begin{proof}
This follows immediately from the existence of the weakly W-exact functor above.
\end{proof}
Ultimately, the reason we wanted to ensure that we landed in constructible objects above was simply to avoid swindles that would be permitted by mapping into an essentially large category.  The fact that our category is essentially small ensures that our K-theory is nontrivial.  Note that the only condition specified above is that the four functors preserve constructibility.  While it is certainly preferable that $\bD$ be compactly or (better yet) constructibly generated, it is not strictly speaking needed to have a well-defined W-exact functor/map on K-theory of the type above.

\section{Lifting the Gillet-Soul\'{e} Motivic Measure}
\subsection{Preamble on the Gillet-Soul\'e Motivic Measure and Some Work of Bondarko}

We begin this section with a brief overview of the classical Gillet-Soul\'e motivic measure, as well as a slight generalization due to Gillet and Soul\'e, before briefly describing a few theorems of Bondarko that allow for an alternate characterization of the Gillet-Soul\'e motivic measure in terms of Voevodsky motives.  This will then be used in the following sections to construct a derived motivic measure lifting the Gillet-Soul\'e motivic measure by passing through Voevodsky motives.

Given a field $k$ which satisfies resolution of singularities and weak-factorization, we have the following theorem due to Bittner (see \cite{Bitt} theorem 3.1 or \cite{MNP} theorem 9.1.2).

\begin{thm}
If $k$ is a field which admits resolution of singularities and weak-factorization, then $K_0(\Var_k)$ may be presented by the isomorphism classes $[X]$ of smooth projective varieties over $k$ subject to the relations
\begin{itemize}
    \item $[\emptyset]=0$
    \item $[X]-[Z]=[Y]-[E]$ where $Z\subseteq X$ is a closed subvariety, $Y=\text{Bl}_Z(X)$, and $E$ is the exceptional divisor
\end{itemize}
\end{thm}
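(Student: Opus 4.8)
The plan is to establish the presentation by constructing mutually inverse homomorphisms between $K_0(\Var_k)$ and the group $\mathbf{bl}$ generated by isomorphism classes of smooth projective $k$-varieties modulo the two displayed relations. In one direction there is the tautological homomorphism $\Phi\colon\mathbf{bl}\to K_0(\Var_k)$, $[X]\mapsto[X]$: the relation $[\emptyset]=0$ is built in, and the blow-up relation is forced because $\text{Bl}_Z X\setminus E\cong X\setminus Z$ yields the scissor identity $[\text{Bl}_Z X]=[X\setminus Z]+[E]=[X]-[Z]+[E]$ in $K_0(\Var_k)$. Surjectivity of $\Phi$ is not hard: by Noetherian induction together with resolution of singularities, Nagata compactification, and the cut-and-paste relation in $K_0$, every class $[X]$ is a $\Z$-linear combination of classes of smooth projective varieties. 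The substance of the theorem is the construction of an inverse $e\colon K_0(\Var_k)\to\mathbf{bl}$.

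I would first define $e$ on smooth varieties. For $U$ smooth, choose (via Nagata plus Hironaka) a smooth projective compactification $U\hookrightarrow\bar U$ whose boundary $D=\bigcup_i D_i$ is a simple normal crossings divisor, and set $e(U):=\sum_I(-1)^{|I|}[D_I]\in\mathbf{bl}$, where $I$ ranges over finite index subsets, $D_I:=\bigcap_{i\in I}D_i$, and $D_\emptyset:=\bar U$; every $D_I$ is smooth projective, so the sum makes sense in $\mathbf{bl}$. The main obstacle is proving that $e(U)$ is independent of the compactification. This is exactly where the weak factorization hypothesis enters: in its sharpened, boundary-compatible form it connects any two such SNC compactifications of $U$ by a finite chain of blow-ups and blow-downs whose centers are smooth, disjoint from $U$, and have normal crossings with the boundary. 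One then reduces to showing that a single blow-up of this type preserves $\sum_I(-1)^{|I|}[D_I]$ in $\mathbf{bl}$; writing the new boundary as the strict transform of $D$ together with the new exceptional divisor, one rewrites each new stratum via the blow-up relation and runs an induction on dimension on the intersections with the center to see that the alternating sum is unchanged.

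Once $e$ is well-defined and, by the same kind of argument, visibly additive on disjoint locally closed decompositions into smooth pieces, I would extend it to all varieties: choose a finite stratification $X=\bigsqcup_\alpha S_\alpha$ into smooth locally closed subvarieties, set $e(X):=\sum_\alpha e(S_\alpha)$, and check independence of the stratification by passing to a common refinement. Additivity under a closed--open decomposition $Z\hookrightarrow X\hookleftarrow X\setminus Z$ is then immediate, so $e$ descends to a homomorphism $K_0(\Var_k)\to\mathbf{bl}$. It remains to see that $\Phi$ and $e$ are inverse: $e\circ\Phi=\mathrm{id}$ because for $X$ smooth projective the trivial compactification gives $e([X])=[X]$, and $\Phi\circ e=\mathrm{id}$ because $\Phi(e(U))=\sum_I(-1)^{|I|}[D_I]=[U]$ in $K_0(\Var_k)$ by inclusion--exclusion. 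I expect essentially all of the difficulty to be concentrated in the invariance of the alternating boundary sum under the elementary blow-ups supplied by weak factorization; the rest is bookkeeping.
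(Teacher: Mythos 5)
Your proposal is correct and is essentially Bittner's original argument, which the paper does not reprove but simply cites (\cite{Bitt} Theorem 3.1, or \cite{MNP} Theorem 9.1.2): a tautological map out of the blow-up presentation, surjectivity via Noetherian induction with resolution and Nagata, and an inverse given by the alternating sum over SNC boundary strata whose well-definedness rests on the boundary-compatible form of weak factorization. Since the paper gives no proof of its own, there is nothing further to compare.
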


As a result, letting $h:\textbf{SmProj}_k\to\text{Chow}(k, \Q)$ be the natural map from smooth projective varieties to Chow motives, one can make the following definition:

\begin{defn}
If $k$ satisfies resolution of singularities and weak factorization, then the map 
\[
\chi_{gs}:K_0(\Var_k)\to K_0(\text{Chow}(k, \Q))
\]
given by $\chi_{gs}([X]):=[h(X)]$ is referred to as the \emph{Gillet-Soul\'e motivic measure}.
\end{defn}

Now, this definition is really also a proposition (\cite{MNP} proposition 9.1.3), as it is nontrivial that one must have $[h(X)]-[h(Z)]=[h(Y)]-[h(E)]$ as above.  That said, we suppress the proof here for brevity.

This motivic measure can actually be redefined in such a way as to employ the standard generators and relations.  Recall that given any pseudo-abelian category $\mathfrak A$, it is always possible not only to define the category $\mathbf{CH}^\flat\mathfrak A$ of bounded chain complexes, but also its chain homotopy category $\text{Hot}^\flat\mathfrak A$.  In particular, one has that 
\[K_0(\text{Hot}^\flat\mathfrak A)\cong K_0(\mathfrak A)\]
via the Euler characteristic map $[A_\bullet]\mapsto\sum_i(-1)^i[A_i]$ via \cite{GilSou} lemma 3. Using the fact that, in particular, $K_0(\Chow(k, \Q)\simeq K_0(\text{Hot}^\flat\Chow(k, \Q))$, we can immediately come up with a refinement of the Gillet-Soul\'e motivic measure as well as a categorification which anticipates the weakly W-exact functors we will construct below.  Note the following definition/theorem.

\begin{thm}
Given any arbitrary $X\in\Var_k$, we may construct a complex $W(X)\in\text{Hot}^\flat\Chow(k, \Q)$ which refines the Gillet-Soul\'e motivic measure in the sense that $[W(X)]\mapsto[h(X)]\in K_0(\Var_k)$ whenever $X$ is smooth and projective.  This is referred to as the \emph{weight complex}.  The assignment $X\mapsto W(X)$ satisfies the following functoriality properties:
\begin{itemize}
    \item A proper map $f:X\to Y$ induces a map $f^*:W(Y)\to W(X)$ and for any two composable proper maps $f$ and $g$, one has that $(f\circ g)^*=g^*\circ f^*$.  In other words, there is a contravariant functor from varieties equipped with proper morphisms to complexes of Chow motives up to homotopy
    \item An open immersion $j:U\overset{\circ}{\to}X$ induces a map $j_*: W(U)\to W(X)$ which is covariantly functorial in open immersions analogously to the above
    \item For any $X, Y\in\Var_k$, one has that $W(X\times Y)\cong W(X)\otimes W(Y)$
    \item Given any closed-open decomposition $Z\overset{i}{\hookrightarrow}X\overset{j}{\underset{\circ}{\leftarrow}}U$ of a $k$-variety $X$, one obtains the exact triangle $W(U)\overset{j_i}{\to}W(X)\overset{i^*}{\to}W(Z)\to W(U)[1]$ in $\text{Hot}^\flat\Chow(k, \Q)$
\end{itemize}
\end{thm}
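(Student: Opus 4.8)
The plan is to realize the weight complex through Bondarko's theory of weight structures, carried out on the $\infty$-category $\DM_B^c(k)\simeq\DM_{gm}(k,\Q)$ of geometric Voevodsky motives with rational coefficients. Because $k$ admits resolution of singularities, this category is rigid (every object is dualizable) and carries the \emph{Chow weight structure}, whose heart is the category $\Chow(k,\Q)$ of Chow motives; associated to it is a weight complex functor $\underline{t}\colon\DM_B^c(k)\to\text{Hot}^\flat\Chow(k,\Q)$, which is additive, monoidal, exact (it sends distinguished triangles to distinguished triangles) and restricts to the degree-zero embedding $\Chow(k,\Q)\hookrightarrow\text{Hot}^\flat\Chow(k,\Q)$ on the heart. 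For a variety $f\colon X\to\Spec k$ I set $N(X):=f_!f^*\mathbf{1}_k\in\DM_B^c(k)$; applying the six-functor identity $f_*\underline{\Hom}(\cF,f^!\cG)\simeq\underline{\Hom}(f_!\cF,\cG)$ with $\cF=f^*\mathbf{1}_k$ and $\cG=\mathbf{1}_k$ shows $M^c_k(X)=f_*f^!\mathbf{1}_k\simeq\underline{\Hom}(N(X),\mathbf{1}_k)$, so that $N(X)$ is the Spanier--Whitehead dual of the compactly supported motive $M^c_k(X)$ produced earlier. I then \emph{define} $W(X):=\underline{t}(N(X))$. The smooth projective normalization is immediate: if $X$ is smooth projective then $f$ is proper, so $f_!=f_*$ and $N(X)=f_*f^*\mathbf{1}_k$, and Atiyah duality together with purity identifies this with a Tate twist--shift of the Chow motive of $X$, i.e.\ with $h(X)$, an object of the heart; hence $W(X)$ is the complex concentrated in degree $0$ with value $h(X)$, and $[W(X)]=[h(X)]$ in $K_0(\Chow(k,\Q))$.

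The functoriality properties then follow formally from those of $N(-)$ inside $\DM_B^c(k)$. If $\phi\colon X\to Y$ is proper over $k$, with structure maps $p,q$, the unit $\id\to\phi_*\phi^*=\phi_!\phi^*$ yields $N(Y)=q_!q^*\mathbf{1}_k\to q_!\phi_!\phi^*q^*\mathbf{1}_k=N(X)$, functorial in $\phi$ and compatible with composition since units compose and $\underline{t}$ is a genuine functor; applying $\underline{t}$ gives $f^*\colon W(Y)\to W(X)$ with $(f\circ g)^*=g^*\circ f^*$. Dually, for an open immersion $j\colon U\hookrightarrow X$ over $k$ the counit $j_!j^*\to\id$ produces $N(U)\to N(X)$, hence a covariant $j_*\colon W(U)\to W(X)$. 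For the Künneth isomorphism, writing $p\colon X\to\Spec k$, $q\colon Y\to\Spec k$ and $h\colon X\times_k Y\to X$ for the base change of $q$, the $!$-base-change equivalence gives $h_!\mathbf{1}_{X\times_k Y}\simeq p^*N(Y)$, whence the projection formula yields $N(X\times_k Y)=p_!h_!\mathbf{1}_{X\times_k Y}\simeq p_!p^*N(Y)\simeq N(X)\otimes N(Y)$, and monoidality of $\underline{t}$ descends this to $W(X\times Y)\simeq W(X)\otimes W(Y)$. Finally, for a closed--open decomposition $Z\overset{i}{\hookrightarrow}X\overset{j}{\underset{\circ}{\leftarrow}}U$ I apply $f_!$ to the localization triangle $j_!j^*\to\id\to i_*i^*\to[1]$ of $\bD(X)$ evaluated at $\mathbf{1}_X=f^*\mathbf{1}_k$, using that $i$ is proper (so $f_!i_*=f_!i_!=(f\circ i)_!$) to identify the terms with $N(U),N(X),N(Z)$; this is a distinguished triangle in $\DM_B^c(k)$ by the localization property of a motivic category, and its image under the exact functor $\underline{t}$ is the asserted triangle $W(U)\overset{j_*}{\to}W(X)\overset{i^*}{\to}W(Z)\to W(U)[1]$ in $\text{Hot}^\flat\Chow(k,\Q)$.

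The genuinely delicate point --- and the step I expect to be the main obstacle --- is the existence of a \emph{strict} weight complex functor with the exactness and monoidality used above. In Bondarko's original construction the weight complex functor is only weakly exact: canonical on objects and on $K_0$, but well defined as a functor only up to non-canonical homotopy, which is not enough for the strict identities $(f\circ g)^*=g^*\circ f^*$, for strict compatibility with $\otimes$, or for honest rather than up-to-homotopy distinguished triangles. I would resolve this by invoking the strong weight complex functor, available here precisely because $\DM_B^c(k)$ underlies a stable $\infty$-category and therefore admits the filtered enhancement needed to rigidify $\underline{t}$ (in the style of Bondarko and Sosnilo); alternatively one can run Gillet--Soul\'e's original argument via cubical hyperresolutions, where the analogous coherence is supplied by the explicit choices of resolution. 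The remaining ingredients --- rigidity of $\DM_B^c(k)$ and existence of the Chow weight structure with heart $\Chow(k,\Q)$ --- are standard consequences of resolution of singularities over $k$. Once a strict $\underline{t}$ has been fixed, the localization triangles together with the identification $K_0(\text{Hot}^\flat\Chow(k,\Q))\cong K_0(\Chow(k,\Q))$ recalled above show that $[X]\mapsto[W(X)]$ descends to a refinement of $\chi_{gs}$ on $K_0(\Var_k)$.
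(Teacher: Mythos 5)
The paper's own proof is a one-line citation to Gillet--Soul\'e (theorem 2 of \cite{GilSou}) and Murre--Nagel--Peters (\cite{MNP} theorem 9.2.1), i.e.\ the classical construction of $W(X)$ via cubical hyperresolutions. Your proposal takes a genuinely different route: define $W(X)$ as the image of a six-functor object under Bondarko's weight complex functor attached to the Chow weight structure on $\DM_B^c(k)\simeq\DM_{gm}(k,\Q)$. This is in the spirit of what the paper does \emph{later} (Theorems 4.5--4.6, Proposition 4.14), and the six-functor verifications you carry out (functoriality from units/counits, K\"unneth from base change and projection formula, the localization triangle from $j_!j^*\to\id\to i_*i^*$) are all fine. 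The difficulty you flag at the end --- that Bondarko's weight complex functor is only weakly exact, and that one needs a strong/strict enhancement for the honest strict identities, strict monoidality, and honest triangles --- is a real gap, but you identify it correctly and your proposed remedies (a strong weight complex functor via a stable-$\infty$-categorical or filtered enhancement, or a retreat to the hyperresolution argument) are the standard ones.

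The more serious problem is a Tate twist error in the normalization. You set $N(X):=f_!f^*\mathbf{1}_k$, and your own computation shows this is the Verdier dual of $M^c_k(X)=f_*f^!\mathbf{1}_k$. It is the latter, not $N(X)$, that maps to Voevodsky's $M^c(X)$ under the equivalence $\DM_B(k)\simeq\DM(k,\Q)$ (Proposition 4.14 in the paper), and it is $\underline{t}(M^c(X))$ that Bondarko identifies with the Gillet--Soul\'e weight complex (\cite{Bon} prop.~6.6.2, the paper's Theorem 4.6). For $X$ smooth projective of dimension $d$ one has $M^c_k(X)\simeq M(X)$, which sits in the heart as $h(X)$, while $N(X)=M(X)^\vee\simeq M(X)(-d)[-2d]$ sits in the heart as $h(X)(-d)=h(X)\otimes\bL^{-d}$. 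So $\underline{t}(N(X))$ is concentrated in degree $0$ with value $h(X)\otimes\bL^{-d}$, and $[W(X)]=[h(X)]\cdot\bL^{-d}\neq[h(X)]$: the stated normalization fails. Concretely, your $W$ would give $[W(\bA^1)]=\bL^{-1}$ rather than $\bL$, so the induced measure on $K_0(\Var_k)$ is the dual $D\circ\chi_{gs}$, not $\chi_{gs}$. Replacing $N(X)$ by $M^c_k(X)$ fixes the twist but flips the variance (covariant in proper, contravariant in open) --- this is precisely the ``homological'' reversal the paper adopts immediately after the theorem (cf.\ \cite{Bon} remark 6.5.2) --- so neither $\underline{t}(N(-))$ nor $\underline{t}(M^c_k(-))$ reproduces the theorem exactly as stated; the cohomological Gillet--Soul\'e $W$ with both the stated variance \emph{and} the stated normalization really does need the hyperresolution construction (or a further duality/twist bookkeeping step that your proposal omits).
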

\begin{proof}
This is \cite{GilSou} theorem 2 (also \cite{MNP} theorem 9.2.1).
\end{proof}
Given the definition of the Grothendieck group of a triangulated category, we note that the last property of the weight complex functors provides a categorical lift of the cutting-and-pasting property enjoyed by the Gillet-Soul\'e motivic measure.  We will not go into the explicit construction of $W(X)$ here, as it will serve more as a bridge between Voevodsky motives and Chow motives and will not be used in and of itself.  That said, for those who are interested in the construction, we recommend the phenomenal paper by Gillet and Soul\'e \cite{GilSou} and the book by Murre \cite{MNP} .

Note that this construction does not work for us as such, given that the maps constructed "run in the wrong direction," among other things.  That said, this can be rectified by simply reversing the arrows and working with homological Chow motives as is described in \cite{Bon} remark 6.5.2 instead of cohomological ones.  This will not alter the underlying Grothendieck ring, so from this point forward, we will work exclusively with the homological grading.  The paper by Bondarko cited above is very deep and has much more to offer than simply what is taken from it here; in what follows, we will only describe the bare minimum of what we need.  Most importantly:

\begin{thm}
There exists a conservative functor $t:\dm_{gm}^{eff}(k, \Q)\to\text{Hot}^\flat\Chow^{eff}(k, \Q)$ called the \emph{weight complex functor} (for reasons we shall see shortly) which induces an isomorphism of Grothendieck rings $K_0(\dm_{gm}^{eff}(k, \Q))\overset{\sim}{\to}K_0(\text{Hot}^\flat\Chow^{eff}(k, \Q))$.  This descends on $K_0$ to an isomorphism $K_0(\dm_{gm}(k, \Q))\overset{\sim}{\to}K_0(\text{Hot}^\flat\Chow(k, \Q))$.
\end{thm}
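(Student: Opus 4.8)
The plan is to obtain this statement as a repackaging of M.~Bondarko's theory of weight structures. The first step is to recall that $\dm_{gm}^{eff}(k,\Q)$ carries the \emph{Chow weight structure} $w_{\Chow}$: a bounded weight structure on $\dm_{gm}^{eff}(k,\Q)$ whose heart is canonically equivalent to the pseudo-abelian category $\Chow^{eff}(k,\Q)$ of effective Chow motives (with $\Q$-coefficients idempotent completeness is automatic, so no Karoubi closure is needed). The existence of $w_{\Chow}$ is one of the main results of \cite{Bon}; it rests on the fact that $\dm_{gm}^{eff}(k,\Q)$ is generated as a thick triangulated category by the Chow motives, together with the orthogonality $\Hom_{\dm_{gm}^{eff}}(M,N[i])=0$ for $M,N\in\Chow^{eff}(k,\Q)$ and $i>0$.

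Given $w_{\Chow}$, the functor $t$ is the associated \emph{weight complex functor}. Concretely, to an object $M$ one associates a weight Postnikov tower, i.e.\ a compatible system of exact triangles $w_{\le n}M\to M\to w_{\ge n+1}M\to w_{\le n}M[1]$ coming from weight decompositions; the successive graded pieces of this tower lie in the heart $\Chow^{eff}(k,\Q)$, and one assembles them into a bounded complex whose differentials are induced by the connecting morphisms of the tower. The content of the weight-structure formalism is that this complex is independent, up to (non-canonical) homotopy equivalence, of all choices involved, and is functorial in $M$ up to homotopy; this is precisely why $\text{Hot}^\flat\Chow^{eff}(k,\Q)$ (rather than a strict complex category) is the correct target, and one obtains an additive functor $t\colon\dm_{gm}^{eff}(k,\Q)\to\text{Hot}^\flat\Chow^{eff}(k,\Q)$. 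Because $w_{\Chow}$ is compatible with the monoidal structure --- the heart is a tensor subcategory and weight decompositions can be chosen compatibly with $\otimes$ --- the functor $t$ is monoidal up to homotopy, so the induced map on $K_0$ is a ring homomorphism.

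For the statement on Grothendieck rings I would invoke the general fact, also due to Bondarko, that for a triangulated category equipped with a bounded weight structure the inclusion of the heart induces an isomorphism on Grothendieck groups, with inverse given by the Euler characteristic $[M]\mapsto\sum_i(-1)^i[t(M)_i]$ of the weight complex. Applied to $w_{\Chow}$ and combined with the identification $K_0(\text{Hot}^\flat\Chow^{eff}(k,\Q))\cong K_0(\Chow^{eff}(k,\Q))$ of \cite{GilSou} Lemma~3, this produces the asserted isomorphism $K_0(\dm_{gm}^{eff}(k,\Q))\overset{\sim}{\to}K_0(\text{Hot}^\flat\Chow^{eff}(k,\Q))$, and one checks that it agrees with the map induced by $t$. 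The genuinely delicate input, and the step I expect to be the main obstacle, is \emph{conservativity} of $t$: a morphism $\phi$ in $\dm_{gm}^{eff}(k,\Q)$ whose weight complex $t(\phi)$ is a homotopy equivalence must be shown to be an isomorphism. This does not follow from the weight-structure axioms alone; it uses properties specific to motives, and I would cite the corresponding theorem of \cite{Bon} rather than reprove it.

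Finally, to descend to the non-effective categories I would use that $\dm_{gm}(k,\Q)$ is obtained from $\dm_{gm}^{eff}(k,\Q)$ by formally inverting the Tate twist $\Q(1)$, and likewise $\Chow(k,\Q)=\Chow^{eff}(k,\Q)[\mathbb{L}^{-1}]$ by inverting the Lefschetz object. Since $t$ intertwines these two twists (up to a shift, which on $K_0$ only contributes a sign and is harmless), and since $K_0$ commutes with the filtered colimits of categories computing these localizations, one obtains ring isomorphisms $K_0(\dm_{gm}(k,\Q))\cong K_0(\dm_{gm}^{eff}(k,\Q))\bigl[[\Q(1)]^{-1}\bigr]$ and $K_0(\Chow(k,\Q))\cong K_0(\Chow^{eff}(k,\Q))\bigl[[\mathbb{L}]^{-1}\bigr]$. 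Localizing the effective isomorphism at the central elements $[\Q(1)]$ and $[\mathbb{L}]$ then yields the desired ring isomorphism $K_0(\dm_{gm}(k,\Q))\overset{\sim}{\to}K_0(\text{Hot}^\flat\Chow(k,\Q))$, with nothing further to verify beyond the compatibility of $t$ with twists, which is built into the construction.
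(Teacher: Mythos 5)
Your proposal is correct and takes essentially the same route as the paper: the paper simply cites Bondarko (Prop.~6.3.1 and Rem.~6.3.2 of \cite{Bon}) for this statement, and your sketch is precisely an unpacking of what that citation contains --- the Chow weight structure, the associated weight complex functor, the $K_0$-isomorphism coming from the heart of a bounded weight structure, conservativity as a separate motivic-specific input, and the passage from effective to non-effective by inverting the Tate/Lefschetz twist. The only thing I would flag is the parenthetical claim that ``with $\Q$-coefficients idempotent completeness is automatic''; it is not automatic from the coefficient ring --- rather, $\dm_{gm}^{eff}(k,\Q)$ is idempotent-complete because it is constructed as a Karoubi envelope (and likewise $\Chow^{eff}(k,\Q)$ is pseudo-abelian by definition), so the conclusion is right but the stated reason is off.
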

\begin{proof}
This is \cite{Bon} proposition 6.3.1 combined with remark 6.3.2.
\end{proof}

\begin{thm}
For any $X\in\Var_k$, we have that $t(M^c(X))\cong W(X)$, and that this assignment is functorial when we restrict to the category of varieties equipped with proper maps.
\end{thm}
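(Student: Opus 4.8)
The plan is to realize both $X\mapsto t(M^c(X))$ and $X\mapsto W(X)$ as the output of one and the same recipe --- apply $h$ to a smooth proper hyperenvelope of $X$ and form the associated complex --- and to deduce the comparison from agreement on smooth projective varieties together with the fact that each construction is independent, up to canonical homotopy equivalence, of the chosen hyperenvelope.

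First I would treat the smooth projective case. For such $X$ we have $M^c(X)\simeq M(X)$ by properness, and $M(X)$ lies in the heart $\Chow^{eff}(k,\Q)$ of Bondarko's Chow weight structure on $\dm_{gm}^{eff}(k,\Q)$. Since the weight complex functor restricted to the heart is, up to the homological reindexing of \cite{Bon} remark 6.5.2, the tautological embedding of $\Chow^{eff}(k,\Q)$ into $\text{Hot}^\flat\Chow^{eff}(k,\Q)$ as complexes concentrated in degree $0$, this gives a natural isomorphism $t(M^c(X))\cong h(X)=W(X)$ for $X$ smooth projective, manifestly functorial for morphisms between such varieties and compatible with products since $h(X\times Y)\cong h(X)\otimes h(Y)$ while $M^c(X\times Y)\simeq M^c(X)\otimes M^c(Y)$.

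Next I would extend to arbitrary $X\in\Var_k$ by d\'evissage, using the standing hypothesis that $k$ admits resolution of singularities and weak factorization. Both functors send a closed--open decomposition $Z\overset{i}{\hookrightarrow}X\overset{j}{\hookleftarrow}U$ to a distinguished triangle in $\text{Hot}^\flat\Chow^{eff}(k,\Q)$: for $W$ this is the fourth clause of the weight complex theorem above (\cite{GilSou} theorem 2), and for $t(M^c(-))$ it is obtained by applying the weight complex functor to Voevodsky's localization triangle $M^c(Z)\to M^c(X)\to M^c(U)\to M^c(Z)[1]$, which $t$ carries to a distinguished triangle; similarly both carry abstract blow-up squares to triangles. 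Now Gillet--Soul\'e build $W(X)$ by choosing a smooth proper hyperenvelope $\widetilde X_\bullet\to X$ with smooth projective terms and forming the alternating complex $[\,\cdots\to h(\widetilde X_1)\to h(\widetilde X_0)\,]$, whereas Bondarko's computation of the weight complex of $M^c(X)$ (\cite{Bon} sections 6.5--6.6) produces a weight Postnikov tower of $M^c(X)$ whose associated complex of heart objects is the $h$-image of the very same datum. Fixing one hyperenvelope, the smooth projective case identifies the terms and shows the two differentials are both the $h$-images of the simplicial face maps, so the complexes agree on the nose; the independence-of-choices statements in \cite{GilSou} and \cite{Bon} then promote this to a natural isomorphism $t(M^c(X))\cong W(X)$. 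Naturality for a proper map $f:X\to Y$ follows by choosing hyperenvelopes $\widetilde X_\bullet\to\widetilde Y_\bullet$ lying over $f$ (available after refinement), under which $W(f)$ and $t(M^c(f))$ are both induced by $h(\widetilde X_\bullet)\to h(\widetilde Y_\bullet)$.

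The main obstacle will be coherence rather than any single step: the weight complex functor is only weakly exact and $W$ is itself only well-defined up to chain homotopy, so matching the two objectwise is immediate while matching them compatibly with every localization triangle and with proper pushforward simultaneously takes care. The route above sidesteps this by observing that the Gillet--Soul\'e and Bondarko constructions are literally the same recipe applied to the same combinatorial object, so that the content of the theorem is precisely that these two well-understood constructions coincide, the verification reducing to the smooth projective case treated above together with the independence-of-choices results already in \cite{GilSou} and \cite{Bon}.
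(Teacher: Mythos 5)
The paper's proof is a one-line citation to Bondarko's proposition 6.6.2 (with the homological versus cohomological reindexing explained by remark 6.3.2(2)); it does not reconstruct the argument. What you have written is essentially a sketch of Bondarko's proof, so in that sense you are taking the same route, just unpacking it rather than citing it. Your outline is substantially correct: the identification on the heart reduces the smooth projective case to the fact that the weight complex functor restricts to the tautological embedding of $\Chow^{eff}(k,\Q)$ into its bounded homotopy category; and the general case does go by showing that Bondarko's weight Postnikov tower for $M^c(X)$ is produced from a smooth proper hyperenvelope of $X$, which is literally the same combinatorial input Gillet--Soul\'e feed into $h$ to build $W(X)$, with both differentials being alternating sums of $h$-images of face maps. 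Your caveat about coherence is well placed and is precisely where the real technical content of Bondarko's proposition lies: the weight complex functor is only ``weakly exact'' and a priori only well defined up to non-canonical isomorphism in the weak homotopy category, so the statement that the two constructions agree \emph{naturally} in proper maps (not merely objectwise) requires the specific arguments of \cite{Bon} sections 6.3 and 6.5--6.6 establishing enough functoriality of the Postnikov tower, and the independence-of-hyperenvelope statements; your phrase ``agree on the nose'' should really be ``agree up to canonical isomorphism in $\text{Hot}^\flat\Chow(k,\Q)$.'' Within the scope of this paper, which delegates all of that to Bondarko, your reconstruction is the right argument and identifies all the correct moving parts.
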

\begin{proof}
This is essentially \cite{Bon} proposition 6.6.2.  Our notation difference from the aforementioned proposition is explicable via remark 6.3.2 (2) of the same paper.
\end{proof}

Thus, not only is $K_0(\dm_{gm}(k, \Q))\cong K_0(\Chow(k, \Q))$, but also the corresponding classes of $M^c(X)$ and $W(X)$ always coincide.  Indeed, we have reduced our task to lifting a stable or DG-enhancement of the assignment $X\mapsto M^c(X)$ to a weakly W-exact functor, as doing so will provide a lift of the Gillet-Soul\'e motivic measure.  Two different approaches are given in the section below.

\subsection{Approach via Six Functors}

For the entirety of this section, all schemes will be based over a perfect field $k$, and we will be working exclusively with rational coefficients (more general coefficients will perhaps be addressed in forthcoming work).

Consider the motivic spectrum $KGL_S\in\textbf{SH}(S)$ defined to be the representative of algebraic K-theory in $\textbf{SH}(S)$.  It should be noted that for any morphism of schemes $f:X\to Y$, one has that $f^*KGL_Y\simeq KGL_X$. Furthermore, its rationalization $KGL_{S, \Q}:=KGL_S\otimes\Q$ breaks up as the sum
\[
KGL_{S, \Q}\simeq \bigoplus_{i\in\Z}KGL^{(i)}_S
\]
in a way that is compatible with base change.

\begin{defn}
We define the Beilinson motivic cohomology over $S$ to be
\[
H_{B, S}:=KGL^{(0)}_S
\]
and define the stable $\infty$-category of Beilinson motives over $S$ to be 
\[
\textbf{DM}_B(S):=\Mod_{H_{B, S}}.
\]
\end{defn}

\begin{thm}
$\textbf{DM}_B$ admits a constructibly generated six functors formalism.  In particular, $\textbf{DM}_B^*$ defines a $(*, \#, \otimes)$-formalism in the sense of Khan, satisfies the Voevodsky conditions, and is constructibly generated.
\end{thm}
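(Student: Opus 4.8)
The plan is to deduce this from the already-recorded fact that $\SH$ (hence its rationalization $\SH_\Q$) is a motivic $\infty$-category, together with Cisinski--D\'{e}glise's analysis of Beilinson motives in the triangulated setting, transported to the $\infty$-categorical framework of Khan. Recall that $\DM_B(S)=\Mod_{H_{B,S}}(\SH(S)_\Q)$, that $H_{B,S}=KGL^{(0)}_S$ is an idempotent commutative ring object of $\SH(S)_\Q$ (equivalently, $\DM_B$ is the plus-part $\SH_{\Q,+}$, a $\otimes$-ideal direct factor of $\SH_\Q$ compatible with all six operations), and that $f^{*}H_{B,Y}\simeq H_{B,X}$ for every $f\colon X\to Y$ (this follows from $f^{*}KGL_Y\simeq KGL_X$ together with base-change compatibility of the weight decomposition recorded above). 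Consequently $\Mod_{H_{B,(-)}}$ is a smashing localization of $\SH_\Q$: the unit $\SH(S)_\Q\to\DM_B(S)$ is $(-)\otimes H_{B,S}$, it preserves colimits, $\DM_B(S)\hookrightarrow\SH(S)_\Q$ is fully faithful, and every structure functor of $\SH_\Q$ commutes with $(-)\otimes H_{B}$ up to canonical equivalence. This is the single observation on which everything else hangs.

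First I would produce the $(*,\#,\otimes)$-structure. Since $f^{*}$ is symmetric monoidal and carries $H_{B,Y}$ to $H_{B,X}$, it restricts to a colimit-preserving symmetric monoidal functor $\DM_B(Y)\to\DM_B(X)$, so $\DM_B^{*}$ is a presheaf of symmetric monoidal presentable $\infty$-categories on $\cS$. For admissible (e.g.\ smooth) $f\colon T\to S$, the functor $f_{\#}$ of $\SH_\Q$ restricts to the module categories: an $H_{B,T}$-module is canonically an $f^{*}H_{B,S}$-module, and the $\SH_\Q$ projection formula $f_{\#}(f^{*}(x)\otimes y)\simeq x\otimes f_{\#}(y)$ equips $f_{\#}(y)$ with an $H_{B,S}$-module structure, left adjoint to $f^{*}$ on $\DM_B$. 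Admissible base change, the projection formula, and additivity for $\DM_B$ then follow by restricting the corresponding statements for $\SH_\Q$ (equivalently, they are automatic from the smashing-localization picture), so $\DM_B^{*}$ is a $(*,\#,\otimes)$-formalism in the sense of Khan; the exceptional operations $f_{!},f^{!}$ for finite-type $f$ are obtained by restricting Khan's construction \cite{Khan}, the localization being compatible with the gluing along a closed-open decomposition that underlies it.

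Next I would check the Voevodsky conditions. Homotopy invariance and Thom stability are inherited from $\SH_\Q$: for a vector bundle $p\colon E\to S$ one checks that $p_{*}$ of $\SH_\Q$ preserves $\DM_B$ (by adjunction, using $H_{B,S}\to p_{*}p^{*}H_{B,S}$), so the unit $\id\to p_{*}p^{*}$ on $\DM_B(S)$ is the restriction of an equivalence in $\SH(S)_\Q$ between objects of the fully faithful subcategory $\DM_B(S)$, hence an equivalence; similarly each Thom twist $\langle\cE\rangle$ acts on $\DM_B(S)$ as tensoring with an invertible object (concretely, $\mathbf{1}_S(1)$ is already $\otimes$-invertible in $\DM_B(S)$), hence is an equivalence. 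The one genuine input is the \emph{localization property} for $\DM_B$, which is a theorem of Cisinski--D\'{e}glise; as it concerns only the recollement $(i_{*},j^{*})$ attached to a closed-open decomposition --- a datum determined by its triangulated shadow --- it transfers to the $\infty$-categorical $\DM_B$ once one knows that $\Ho(\DM_B(S))$ is Cisinski--D\'{e}glise's triangulated category of Beilinson motives, compatibly with the six operations.

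Finally, $\DM_B(S)$ is compactly generated --- being a smashing localization of the compactly generated $\SH(S)_\Q$, with compact generators the images of the $f_{\#}f^{*}\mathbf{1}_S\langle-n\rangle\simeq f_!f^!\mathbf{1}_S\langle-n\rangle$ --- and $f^{*}$ preserves compact objects since it does so on $\SH_\Q$; that every constructible object is compact, and that the six functors preserve constructibility over Noetherian quasi-excellent schemes of finite dimension, is the finiteness theorem for $\DM_B$ of Cisinski--D\'{e}glise, which also follows from the finiteness statement for $\SH_\Q$ quoted above from \cite{DFJK} via the fact that $\DM_B$ is a direct factor of $\SH_\Q$ compatibly with the operations. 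The main obstacle is not a computation but the \emph{transfer of enhancement}: identifying the $\infty$-categorical $\DM_B=\Mod_{H_B}$, on homotopy categories and compatibly with all six operations, with the triangulated $\DM_B$ of Cisinski--D\'{e}glise so that their localization and finiteness theorems may be invoked, and checking that each of the three properties claimed is insensitive to the passage between the triangulated and $\infty$-categorical presentations --- given the smashing-localization description, careful bookkeeping rather than new mathematics.
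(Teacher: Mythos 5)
The paper's own ``proof'' is a single citation to Richarz--Scholbach (synopsis 2.1.1), which in turn rests on Cisinski--D\'eglise and Ayoub; you have instead supplied the argument that underlies those citations, namely the smashing-localization picture for $\DM_B$ inside $\SH_\Q$. Your proposal is correct in substance, and it is genuinely more informative than the paper's one-line reference: the key facts you isolate --- that $H_B$ is an idempotent commutative algebra in $\SH_\Q$ stable under $f^*$, so that $\Mod_{H_B}$ is a smashing localization compatible with all six operations --- are exactly the engine of the Cisinski--D\'eglise construction, and once that is in hand the $(*,\#,\otimes)$-structure, the Voevodsky conditions, and constructible generation all descend from $\SH_\Q$ by restriction.

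One place where you take an unnecessary detour: you treat the localization property as a ``genuine input'' to be imported from Cisinski--D\'eglise's triangulated theorem and then lifted via a transfer-of-enhancement argument. In fact localization for $\DM_B$ follows directly and $\infty$-categorically from localization for $\SH_\Q$ once the smashing-localization compatibilities are established: $i_*$ remains fully faithful after restriction; the cofiber sequences $j_\#j^*\to\id\to i_*i^*$ and $i_*i^!\to\id\to j_*j^*$ hold in $\DM_B$ because all four functors preserve the localizing subcategory; and the essential-image characterization transfers since an $H_B$-module in the kernel of $j^*$ equals $i_*N$ for some $N$, and the projection formula together with $i^*H_B\simeq H_B$ and fully faithfulness of $i_*$ show that $N$ is itself an $H_B$-module. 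So no round trip through the triangulated category is required. Similarly, your aside that $p_*$ preserves $\DM_B$ for a vector bundle $p$ via the unit $H_{B,S}\to p_*p^*H_{B,S}$ is more special-purpose than needed: for a smashing localization the right adjoint $p_*$ automatically preserves local objects, for any $p$. These are stylistic rather than mathematical points --- the argument as a whole is sound and faithfully reconstructs what the cited literature proves.
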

\begin{proof}
This is part of \cite{RicSch} synopsis 2.1.1 based on prior work by Cisinski and D\'eglise in \cite{CisDeg} and Ayoub in \cite{Ayo} .
\end{proof}

\begin{thm}
    The motivic category of Beilinson motives $\DM_B$ satisfies the following properties:
    \begin{itemize}
        \item \emph{Absolute Purity:} for any morphism $f:X\to S$ of Noetherian schemes which is factorizable through a closed immersion and a smooth morphism, one obtains an equivalence
        \[
        \mathbf1_X\langle d\rangle\simeq f^!(\mathbf1_S)
        \]
        in $\SH_\Q(X)$, where $d=\rank(T_{X/S})$ is the virtual dimension of $f$
        \item \emph{Finiteness:} over quasi-excellent scheme, the six functors preserve constructibility
        \item \emph{Duality:} for every separated morphism of finite type $f:X\to S$ with $S$ quasi-excellent and regular, one has that $f^!(\mathbf1_S)$ is a dualizing object in $\SH_\Q(X)$
    \end{itemize}
\end{thm}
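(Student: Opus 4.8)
The plan is to deduce all three properties from the corresponding statement for $\SH_\Q$ established above (\cite{DFJK} corollary C), together with the Cisinski--D\'eglise identification of $\DM_B$ with the ``plus part'' $\SH_{\Q,+}$ of the rational motivic stable homotopy category. Recall that Morel's computation of the zeroth stable homotopy sheaf produces an idempotent decomposition of the rational motivic sphere, hence a splitting $\SH_\Q\simeq\SH_{\Q,+}\times\SH_{\Q,-}$ of presheaves of symmetric monoidal stable $\infty$-categories on $\cS$, under which $\SH_{\Q,+}\simeq\DM_B$ compatibly with the full $(*,\#,\otimes)$-structure (\cite{CisDeg}, upgraded to the $\infty$-categorical level via the comparison of \cite{RicSch} synopsis 2.1.1, which is also what was invoked in the previous theorem to see that $\DM_B$ is a motivic $\infty$-category). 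The first step, then, is to record that this equivalence is an equivalence of motivic $\infty$-categories in Khan's sense, so that it intertwines the six operations and the Thom twists, and that the splitting itself is induced by maps of ring spectra and is therefore automatically $\infty$-functorial.

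Granting this, each of the three properties is inherited by a $\otimes$-compatible direct factor, and no further geometric input is needed. For \emph{absolute purity}: given $f:X\to S$ factorizable through a closed immersion and a smooth morphism, the equivalence $\mathbf1_X\langle d\rangle\simeq f^!(\mathbf1_S)$ holds in $\SH_\Q(X)$; projecting to the $+$-factor, and using that $\otimes$, $\langle-\rangle$ and $f^!$ all commute with the projection $\SH_\Q\to\SH_{\Q,+}$ and that the projection carries $\mathbf1$ to $\mathbf1$, yields the same equivalence in $\DM_B(X)$. For \emph{finiteness}: the projection identifies $\DM_{B,\cons}(S)$ with the essential image of $\SH_{\Q,\cons}(S)$ under the decomposition, so stability of constructibility under the six operations over quasi-excellent schemes transports verbatim. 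For \emph{duality}: if $\omega=f^!(\mathbf1_S)$ is dualizing in $\SH_\Q(X)$ for $f:X\to S$ separated of finite type with $S$ quasi-excellent regular, then its image is dualizing in $\DM_B(X)$, since the internal hom of the direct factor is the restriction of $\underline{\Hom}$ and the biduality natural transformation for the factor is the image of the one for $\SH_\Q$.

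The main obstacle is the first step: promoting the classical Cisinski--D\'eglise comparison (phrased for triangulated categories and model-categorical premotivic structures) to an equivalence of $(*,\#,\otimes)$-formalisms of $\infty$-categories respecting all six functors and the Thom twists on the nose, so that the transport arguments above are literally valid rather than valid only up to a non-canonical isomorphism at each finite stage. In practice one either packages this into the rectification already used in the preceding theorem, observing that the idempotent splitting comes from genuine ring-spectrum maps and hence lifts without choices; or one sidesteps the decomposition entirely and argues directly from $\DM_B(S)=\Mod_{H_{B,S}}$, extracting absolute purity and duality from the corresponding structural properties of the Beilinson spectrum $H_{B,S}$ and finiteness from \cite{CisDeg}. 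Either route reduces the statement to facts already in the literature, so the only real content is bookkeeping of coherences.
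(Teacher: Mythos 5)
Your proposal is correct and takes essentially the same route as the paper, whose proof is a single citation: \cite{DFJK} Theorem A parts II--IV together with the equivalence of part V identifying the relevant category with $\DM_B$ (resting on \cite{CisDeg}). Your transport through the idempotent splitting $\SH_\Q\simeq\SH_{\Q,+}\times\SH_{\Q,-}$ with $\SH_{\Q,+}\simeq\DM_B$ reconstructs the substance of that citation, and the coherence worry you raise at the end is precisely what invoking \cite{DFJK} (already formulated at the level of motivic $\infty$-categories, as in the preceding theorem of the paper) is meant to absorb.
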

\begin{proof}
This is \cite{DFJK} theorem A parts II-IV under the equivalence found in part V of the same theorem (based on prior work by Cisinski and D\'eglise in \cite{CisDeg}).
\end{proof}

We also obtain the following corollary.

\begin{cor}
 Given $f:X\to Y$ separated of finite type with $X$ and $Y$ quasi-excellent, the involutive antiequivalence $D_X:=\Hom_X(-, f^!\mathbf{1}_Y)$ on the category $\DM_B(X)$ descends to one on the category of compact/constructible objects $\DM_B^c(X)$.  
\end{cor}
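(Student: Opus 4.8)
The plan is to read the corollary off from the \emph{Finiteness} and \emph{Duality} clauses of the preceding theorem (Absolute Purity / Finiteness / Duality for $\DM_B$). One preliminary reduction: since $\DM_B$ is constructibly generated, its compact and its constructible objects coincide (this is built into the definition of constructible generation), so the two descriptions of the target --- ``compact'' and ``constructible'' --- agree and we may write $\DM_B^c(X)$ unambiguously. Recall also that $D_X$ exists as an endofunctor of $\DM_B(X)$ in the first place only because $\DM_B(X)$ is closed symmetric monoidal, so that the internal hom $\underline{\Hom}_X$ is defined.

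First I would check that $D_X$ sends $\DM_B^c(X)$ into itself. The unit $\mathbf{1}_Y$ is constructible, being the generator attached to $f=\id_Y$, $n=0$ in the definition of $\DM_B^c(Y)$. Since $f\colon X\to Y$ is separated of finite type and $X,Y$ are quasi-excellent, the Finiteness clause (the six functors preserve constructibility over quasi-excellent bases, relative to finite-type morphisms) gives $f^!\mathbf{1}_Y\in\DM_B^c(X)$. Applying the same clause again, now to the internal hom over the quasi-excellent base $X$, yields $\underline{\Hom}_X(M,f^!\mathbf{1}_Y)\in\DM_B^c(X)$ for all $M\in\DM_B^c(X)$. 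Hence $D_X$ restricts to a functor $D_X\colon\DM_B^c(X)^{op}\to\DM_B^c(X)$.

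Next I would invoke the Duality clause: $f^!\mathbf{1}_Y$ is a dualizing object of $\DM_B(X)$. By the very definition of a dualizing object this says exactly that the canonical biduality transformation $\id\Rightarrow D_X\circ D_X$ is an equivalence on constructible objects; combined with the previous paragraph, the restricted $D_X$ therefore squares to the identity, hence is an equivalence $\DM_B^c(X)^{op}\overset{\sim}{\to}\DM_B^c(X)$, i.e.\ an involutive antiequivalence. It is worth stressing that no such statement holds on all of $\DM_B(X)$ --- biduality holds only on the constructible (equivalently dualizable) range --- which is precisely why ``descends to'' is the operative phrase in the corollary.

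There is no genuine obstacle here, this being a formal consequence of the two cited clauses; the only point needing care is bookkeeping of base hypotheses. The Duality clause is stated for the target of the finite-type morphism being \emph{regular} (more precisely, so that its unit carries a dualizing complex), whereas the corollary only posits quasi-excellence of $Y$. When one insists on mere quasi-excellence, one supplies the dualizing complex by propagating the one on $\Spec\Z$ along continuity and absolute purity in the style of Cisinski-D\'eglise; alternatively one simply restricts to regular $Y$, which is the case relevant to the sequel. Granting this, the corollary follows.
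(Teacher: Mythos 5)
Your proof is correct and follows the same route as the paper, which simply combines the Finiteness and Duality clauses of the preceding theorem; you have usefully unpacked what that combination entails (constructibility of $f^!\mathbf{1}_Y$ and stability of $\underline{\Hom}$, plus biduality on the dualizable range). You are also right to flag the bookkeeping mismatch — the Duality clause as stated requires $Y$ quasi-excellent \emph{and regular}, whereas the corollary only posits quasi-excellence — and either of your suggested fixes (propagating the dualizing complex from $\Spec\Z$, or restricting to regular $Y$, the case actually used downstream) reconciles it.
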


\begin{proof}
This combines the second and third statements of the preceding theorem.
\end{proof}

\begin{prop}
The involution $D_{(-)}$ intertwines the four functors in the following way.  Where appropriately defined, if one has a morphism $g: S\to T$, one has that
 \[
 D_T\circ g_!\simeq g_*\circ D_S\text{ and }g^*\circ D_T\simeq D_S\circ g^!.
 \]
\end{prop}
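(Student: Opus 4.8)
The plan is to establish the two equivalences in turn. The first, $D_T\circ g_!\simeq g_*\circ D_S$, is an immediate consequence of the internal-hom/projection-formula equivalences of the six-operations theorem recalled above; the second, $g^*\circ D_T\simeq D_S\circ g^!$, then follows by a purely formal adjunction argument, using that $D_S$ and $D_T$ are involutive on constructible objects (the preceding corollary). Here ``appropriately defined'' means: $g:S\to T$ is a morphism of $k$-varieties (hence automatically separated of finite type, with $S$ and $T$ quasi-excellent); $D_X=\underline{\Hom}_X(-,\omega_X)$ for the absolute dualizing complex $\omega_X:=\pi_X^!\mathbf{1}_k$, where $\pi_X:X\to\Spec k$ is the structure morphism; and all functors are implicitly restricted to the subcategories $\DM_B^c$ of constructible objects, on which $D_S$ and $D_T$ are involutions. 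The one geometric input is that, since $\pi_S=\pi_T\circ g$, functoriality of the exceptional pullback supplies a canonical equivalence $\omega_S\simeq g^!\omega_T$.

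For the first identity, fix $\cF\in\DM_B^c(S)$ and apply the equivalence $g_*\,\underline{\Hom}_S(\cF,g^!\cG)\simeq\underline{\Hom}_T(g_!\cF,\cG)$ of the six-operations theorem with $\cG=\omega_T$:
\[
D_T(g_!\cF)=\underline{\Hom}_T(g_!\cF,\omega_T)\simeq g_*\,\underline{\Hom}_S(\cF,g^!\omega_T)\simeq g_*\,\underline{\Hom}_S(\cF,\omega_S)=g_*(D_S\cF).
\]
Naturality in $\cF$ upgrades this to an equivalence of functors $D_T\circ g_!\simeq g_*\circ D_S$; both sides take values in $\DM_B^c(T)$ because $g_!$ preserves constructibility unconditionally and $g_*$ does so for separated finite-type morphisms into a quasi-excellent base.

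For the second identity, first rewrite the first one by composing on the right with $D_S$ and using $D_S\circ D_S\simeq\id$ on $\DM_B^c(S)$, obtaining $D_T\circ g_!\circ D_S\simeq g_*$. Now $g_!$ is left adjoint to $g^!$, and $D_S,D_T$ are contravariant self-equivalences of $\DM_B^c(S),\DM_B^c(T)$; chaining mapping-space equivalences
\[
\Hom_{\DM_B^c(S)}(D_S\,g^!\,D_T(\cG),\cF)\simeq\Hom_{\DM_B^c(S)}(D_S\cF,g^!D_T\cG)\simeq\Hom_{\DM_B^c(T)}(g_!D_S\cF,D_T\cG)\simeq\Hom_{\DM_B^c(T)}(\cG,D_Tg_!D_S\cF)
\]
and using $D_T\circ g_!\circ D_S\simeq g_*$ exhibits $D_S\circ g^!\circ D_T$ as a left adjoint of $g_*$. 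Since $g^*$ is also left adjoint to $g_*$, uniqueness of adjoints gives $D_S\circ g^!\circ D_T\simeq g^*$, and composing on the right with $D_T$ (using $D_T\circ D_T\simeq\id$) yields $D_S\circ g^!\simeq g^*\circ D_T$. As a cross-check, when $\cG$ is dualizable one can instead compute directly that $g^*D_T(\cG)\simeq g^*(\cG^{\vee}\otimes\omega_T)\simeq(g^*\cG)^{\vee}\otimes g^*\omega_T$ and match this with $D_S(g^!\cG)$ via purity and biduality.

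The genuinely nonformal ingredient is the internal-hom equivalence of the six-operations theorem; everything else is adjunction yoga. The main point requiring care is the bookkeeping: one must check that each functor appearing genuinely preserves constructibility, so that $D_S,D_T$ are honest involutions and the whole argument lives inside the essentially small categories $\DM_B^c(-)$, and that the adjunction and self-duality manipulations are coherent at the $\infty$-categorical level and not merely on homotopy categories. Both are guaranteed by the results already cited — the six-operations theorem of Khan, the corollary above, and the standard $\infty$-categorical calculus of adjoints.
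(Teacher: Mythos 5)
Your proof is correct; the paper itself does not prove this proposition but simply cites Richarz--Scholbach (synopsis 2.1.1, part viii), which in turn rests on Cisinski--D\'eglise, so your argument supplies the mathematical content that the paper delegates to a reference. Your route is the standard Verdier-duality argument: for the first identity, specialize the internal-hom/projection equivalence $g_*\underline{\Hom}_S(\cF,g^!\cG)\simeq\underline{\Hom}_T(g_!\cF,\cG)$ (the third item in the six-operations theorem recalled earlier in the paper) to $\cG=\omega_T$, and use $g^!\omega_T\simeq\omega_S$ from functoriality of $(-)^!$ along $\pi_S=\pi_T\circ g$; for the second, use biduality on constructibles (the preceding corollary) to reduce to $D_T\circ g_!\circ D_S\simeq g_*$, then exhibit $D_S\circ g^!\circ D_T$ as a left adjoint of $g_*$ on $\DM_B^c$ and invoke uniqueness of adjoints against $g^*$. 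Everything you use is already available in the paper (Theorem~3.16, Theorem~4.7, Corollary~4.8), and your bookkeeping of where constructibility is preserved is accurate. The one caveat is your closing ``cross-check'': the identification $D_T(\cG)\simeq\cG^\vee\otimes\omega_T$ requires $\cG$ tensor-dualizable, and invoking purity to match with $D_S(g^!\cG)$ requires $g$ smooth, neither of which holds for general constructible $\cG$ and general $g$ --- so that remark illustrates a special case rather than corroborating the general statement. Since it plays no role in the main argument, the proof stands.
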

\begin{proof}
This is \cite{RicSch} synopsis 2.1.1 part viii.
\end{proof}

The reason that we mostly use the above approach is that it has an extremely rigid structure and many nice properties, including but not limited to absolute purity.  As an example, the above theorems yield the following central result as a corollary.

\begin{cor}
Given any Noetherian, quasi-excellent scheme $S$ of finite dimension, one obtains a weakly W-exact functor 
\[M^c_S:\Var_S\to\DM_B^c(S)\]
which descends on K-theory to a map of K-theory spectra
\[K(M^c_S):K(\Var_S)\to K(\DM_B^c(S)).\]
\end{cor}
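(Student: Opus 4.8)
The plan is to deduce this corollary by specializing the general construction of the previous section to $\bD = \DM_B$. Concretely, I would set $M^c_S := M^c_{\DM_B(S)}$ and appeal to the core theorem producing $M^c_{\bD(S)}$ together with its corollary on K-theory; the only work is to check that $\DM_B$ satisfies the hypotheses of that theorem for $S$ Noetherian, quasi-excellent and of finite dimension.

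First I would verify constructible generation: this is exactly the content of the theorem asserting that $\DM_B$ admits a constructibly generated six functors formalism (via \cite{RicSch}, based on \cite{CisDeg} and \cite{Ayo}). In particular, for every such $S$ the subcategory $\DM_{B,\cons}(S)$ of constructible objects coincides with the subcategory of compact objects, so the target $\DM_B^c(S)$ appearing in the statement is literally $\DM_{B,\cons}(S)$, which is essentially small.

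Next I would check the second set of conditions in the core theorem, namely that the six functors preserve constructible objects relative to finite type morphisms with Noetherian quasi-excellent finite-dimensional target. The preservation of constructibility by $f^*$, $f_*$, $f_!$, $f^!$ and by $\underline{\Hom}$ over quasi-excellent (finite-dimensional) schemes is precisely the Finiteness property recorded for $\DM_B$ above (from \cite{DFJK}, theorem A part III, via \cite{CisDeg}); preservation of constructibility by $\otimes$ with a constructible object is part of the general stability proposition for constructibility. Since every variety $f : X \to S$ is by convention separated of finite type over $S$, and over a Noetherian base finite type equals finite presentation, all morphisms occurring in the construction (the structure maps $X \to S$, the closed and open immersions in a closed/open decomposition, and the isomorphisms serving as weak equivalences) are of the required type, so the hypotheses of the core theorem are met with $S$ as the distinguished base.

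With the hypotheses verified, the core theorem yields the weakly W-exact functor $M^c_S : \Var_S \to \DM_B^c(S)$ sending $(X \overset{f}{\to} S)$ to $f_* f^! \mathbf{1}_S$, and its corollary produces the induced map of K-theory spectra $K(M^c_S) : K(\Var_S) \to K(\DM_B^c(S))$. The only genuinely delicate point is matching the hypotheses: one must make sure the finiteness statement for $\DM_B$ is available in exactly the generality ``Noetherian, quasi-excellent, finite-dimensional'' demanded by the second condition of the core theorem (in particular that internal $\Hom$, and not merely the four functors, preserves constructibility there), and that constructible generation---hence the identification of constructible with compact objects, which is what keeps the K-theory non-trivial---is known in the same generality. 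Both are supplied by the cited results of Cisinski--D\'eglise and D\'eglise--Fasel--Jin--Khan, so no further argument is needed and the corollary is a direct application.
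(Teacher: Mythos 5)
Your proof is correct and follows essentially the same route as the paper, which simply cites the preceding theorems on constructible generation and finiteness for $\DM_B$ and invokes the general construction of Section 3. You have spelled out the hypothesis-checking in more detail than the paper (which states only that the corollary ``follows directly from the above theorem''), but the argument and the sources relied upon are identical.
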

\begin{proof}
This follows directly from the above theorem.
\end{proof}

In particular, this yields a generalized derived Gillet-Soul\'{e} motivic measure, as we shall see shortly.  Before we can say this with certainty, we must take a detour through the DG category of Voevodsky motives.

\begin{cor}
Over any excellent, geometrically unibranch scheme $S$, the one obtains an adjunction
\[
\begin{tikzcd}
\text{SH}_\Q(S)\ar[r,bend left," ",""{name=A, below}] & \text{DM}(S, \Q)\ar[l,bend left," ",""{name=B,above}] \ar[from=A, to=B, phantom, "\dashv" rotate=270]
\end{tikzcd}
\]
on homotopy categories where the left adjoint arises as sheafification with respect to transfers and the right adjoint arises as a forgetful functor (forgetting transfers).  This descends to an equivalence
\[
\begin{tikzcd}
\text{DM}_B(S)\ar[r,bend left," ",""{name=A, below}] & \text{DM}(S, \Q)\ar[l,bend left," ",""{name=B,above}] \ar[from=A, to=B, phantom, "\dashv" rotate=270]
\end{tikzcd}
\]
which further restricts to an equivalence
\[
\begin{tikzcd}
\text{DM}_B^c(S)\ar[r,bend left," ",""{name=A, below}] & \text{DM}_{gm}(S, \Q)\ar[l,bend left," ",""{name=B,above}] \ar[from=A, to=B, phantom, "\dashv" rotate=270]
\end{tikzcd}
\]
on the level of compact/constructible objects.
\end{cor}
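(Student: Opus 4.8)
The plan is to build the adjunction from the ``forget transfers'' functor, observe that its right adjoint naturally takes values in $H_{B,S}$-modules, invoke the comparison theorem of Cisinski--D\'eglise over excellent geometrically unibranch bases to upgrade this to an equivalence, and then deduce the statement on compact objects formally. First I would recall that $\DM(S,\Q)$ is the $\P^1$-stable, $\bA^1$-local derived category of Nisnevich sheaves with transfers of $\Q$-vector spaces on $\Sm_S$, while $\SH_\Q(S)\simeq\textbf D_{\bA^1}(S,\Q)$ is the same construction without transfers. The forgetful functor from (pre)sheaves with transfers to (pre)sheaves has a left adjoint given by freely adjoining transfers and Nisnevich-sheafifying; after $\bA^1$-localization and $\P^1$-stabilization this induces the displayed adjunction $(\gamma^*\dashv\gamma_*)$ on homotopy categories, with left adjoint $\gamma^*:\SH_\Q(S)\to\DM(S,\Q)$ symmetric monoidal and carrying $\mathbf{1}_S$ to the motivic unit. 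Hence $\gamma_*\mathbf{1}_S$ is a commutative monoid in $\SH_\Q(S)$, canonically the Beilinson spectrum $H_{B,S}$ by Cisinski--D\'eglise, so $\gamma_*$ refines to a functor $\overline\gamma_*:\DM(S,\Q)\to\Mod_{H_{B,S}}=\DM_B(S)$, and $\gamma^*$ factors through the free-module functor $\SH_\Q(S)\to\DM_B(S)$.

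The substantive step is then the Cisinski--D\'eglise comparison theorem \cite{CisDeg}: for $S$ excellent and geometrically unibranch, the refined functor $\overline\gamma_*:\DM(S,\Q)\to\DM_B(S)$ is an equivalence of triangulated symmetric monoidal categories. This is exactly where the hypotheses on $S$ genuinely enter --- through the identification of Nisnevich sheaves with transfers with $h$-sheaves, which relies on the normalization map of a geometrically unibranch scheme being radicial, together with rigidity for torsion-free coefficients --- and I would take it as a black box. Composing this equivalence with the free/forget adjunction between $\SH_\Q(S)$ and $\DM_B(S)$ reproduces the first displayed adjunction and exhibits its descent to the asserted adjoint equivalence $\DM_B(S)\simeq\DM(S,\Q)$.

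Finally I would pass to compact objects. By the theorem above that $\DM_B$ is constructibly generated, $\DM_B(S)$ is compactly generated and $\DM_B^c(S)$ is exactly its full subcategory of compact objects; likewise $\DM(S,\Q)$ is compactly generated, and $\DM_{gm}(S,\Q)$ is by definition the thick subcategory generated by the motives $M(X)(n)$ of smooth $S$-schemes, which are compact generators, so it is precisely the full subcategory of compact objects of $\DM(S,\Q)$. Since any equivalence of categories restricts to an equivalence between the respective subcategories of compact objects, the equivalence of the previous paragraph restricts to $\DM_B^c(S)\simeq\DM_{gm}(S,\Q)$; compatibility of the two levels is also visible directly, as $\gamma^*$ preserves compactness (its right adjoint $\gamma_*$ preserves coproducts) and carries the constructible generators $f_\#f^*\mathbf{1}_S$ of $\DM_B^c(S)$ to the standard generators of $\DM_{gm}(S,\Q)$. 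The only non-formal ingredient in the whole argument is the comparison theorem invoked in the second paragraph --- the identification of Beilinson motives with Voevodsky motives over an excellent geometrically unibranch base --- and that, rather than any of the adjunction bookkeeping, is the main obstacle.
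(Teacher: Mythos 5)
Your proposal is correct and takes essentially the same route as the paper, which simply cites Cisinski--D\'eglise Theorem 16.1.4 and then notes that geometric motives are the compact objects of $\text{DM}(S,\Q)$. You have just unpacked the citation: the adjunction is the free/forget adjunction for transfers, its factorization through $\Mod_{H_{B,S}}$ is the content of the comparison theorem (which you rightly treat as the black box where the excellence and geometric unibranchness of $S$ enter), and the restriction to compacts is the same Neeman-style observation the paper makes about $\text{DM}_{gm}$ being the thick subcategory of compact generators.
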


\begin{proof}
This is \cite{CisDeg} theorem 16.1.4 coupled with noting that geometric motives are simply the compact objects in Voevodsky's big category of motives.
\end{proof}

\begin{prop}
Specializing the above equivalence to the case of $S=\Spec{(k)}$, one has that for all $k$-schemes $f:X\to \Spec(k)$, considering $\mathbf{1}_k\in\text{DM}_B(k)$ one has 
\[
f_*f^!(\mathbf{1}_k)\mapsto M^c(X),
\]
the compactly supported motive of X (thus justifying our notation above).
\end{prop}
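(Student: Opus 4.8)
The plan is to trace the object $f_*f^!(\mathbf{1}_k)$ through the equivalence $\mathrm{DM}_B^c(k)\simeq\mathrm{DM}_{gm}(k,\mathbb{Q})$ of the preceding corollary and identify it with Voevodsky's compactly supported motive $M^c(X)$, which by definition is $f_!f^!(\mathbf{1}_k)$ (equivalently $f_*f^!(\mathbf{1}_k)$ when $f$ is proper, and built up from the proper case via the localization triangles in the general case). Since the equivalence of Corollary (the one citing \cite{CisDeg} theorem 16.1.4) is an equivalence of motivic $\infty$-categories — in particular it is compatible with the full six-functor formalism on both sides — the image of $f_*f^!(\mathbf{1}_k)$ under the forgetful-to-$\mathrm{DM}$ functor is $f_*f^!$ applied to the image of $\mathbf{1}_k$, which is the unit object $\mathbb{Q}$ of $\mathrm{DM}(k,\mathbb{Q})$. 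So the whole content is: in Voevodsky's category, $f_*f^!(\mathbf{1}_k)=M^c(X)$.

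First I would reduce to recalling the definition of $M^c$ on the $\mathrm{DM}$ side. For $X$ of finite type over $k$ with structure map $f$, Voevodsky's $M^c(X)$ is characterized by: (i) $M^c(X)=f_!f^!(\mathbf{1}_k)$ where $f_!,f^!$ are the exceptional operations of the six-functor formalism on $\mathrm{DM}$; (ii) for $f$ proper, $M^c(X)=M(X)=f_*f^!(\mathbf{1}_k)$ since $f_!\simeq f_*$; (iii) for a closed-open decomposition $Z\overset{i}{\hookrightarrow}X\overset{j}{\hookleftarrow}U$ one has the localization triangle $M^c(U)\to M^c(X)\to M^c(Z)\to M^c(U)[1]$. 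Next, on the $\mathrm{DM}_B$ side, the same formula $f\mapsto f_*f^!(\mathbf{1}_k)$ with $f_*\simeq f_!$ for proper $f$ gives exactly $M^c_k(X)$ as constructed in Theorem 3.35 (the $M^c_{\bD(S)}$ theorem), and by Proposition 3.34 it satisfies the identical localization triangle. So both assignments are determined by the same proper-base-case value and the same cutting relations; since the equivalence of categories carries $f_!,f^!,f_*$ on one side to the corresponding operations on the other (this is what "equivalence of motivic categories" buys us, and it is the force of \cite{CisDeg} 16.1.4, which is an equivalence of the whole premotivic/$(*,\#,\otimes)$-structures), the two objects correspond.

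The cleanest way to organize the argument is therefore: (1) observe that the equivalence $\mathrm{DM}_B^c\simeq\mathrm{DM}_{gm}$ over $S=\Spec(k)$ is the restriction of an equivalence of six-functor formalisms, so it commutes with $f_*$, $f^*$, $f_!$, $f^!$ and sends $\mathbf{1}_k$ to $\mathbf{1}_k$; (2) conclude $f_*f^!(\mathbf{1}_k)$ on the $\mathrm{DM}_B$ side maps to $f_*f^!(\mathbf{1}_k)$ on the $\mathrm{DM}$ side; (3) identify the latter with Voevodsky's $M^c(X)$ — for proper $X$ this is immediate from $f_*\simeq f_!$ and the definition, and for general $X$ one dévisses using Nagata compactification (or just resolution/blowup since $k$ is perfect) together with the localization triangles, noting that both the source object and $M^c(X)$ satisfy the same triangles and agree on the proper pieces. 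Alternatively, one can cite that $\mathrm{DM}(k,\mathbb{Q})$'s $M^c$ is \emph{defined} as $f_!f^!(\mathbf{1}_k)$ in Cisinski–Déglise's treatment, making step (3) a tautology once one knows $f_!\simeq f_*$ is not needed — but invoking $f_*\simeq f_!$ for proper $f$ and the four-functor identities we already proved keeps everything self-contained.

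The main obstacle is step (3) in the non-proper case: one must be sure that the localization-triangle characterization actually pins down $M^c(X)$ uniquely, i.e. that two assignments $X\mapsto A(X)$ and $X\mapsto B(X)$ agreeing on smooth projective (or just proper) varieties and satisfying the same closed-open triangles compatibly must coincide as objects (not merely in $K_0$). This is really the statement that $M^c$ is determined by its restriction to proper varieties via dévissage, which is standard (it is how Voevodsky and Cisinski–Déglise construct $M^c$ in the first place) but deserves an explicit pointer; the safest route is to appeal directly to the construction of $f_!$ via Nagata compactification so that "$f_*f^!(\mathbf 1_k)$ with $f_!\simeq f_*$ for proper maps, glued along localization" is literally the definition of $M^c(X)$, at which point the proposition is a formal consequence of the six-functor compatibility of the equivalence.
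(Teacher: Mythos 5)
Your overall strategy (the equivalence is compatible with the four functors, so trace $f_*f^!(\mathbf{1}_k)$ through it; then identify the image with Voevodsky's $M^c(X)$) is in the right spirit, and in fact the paper does essentially this, except that for step (3) it passes through the further equivalence $\dm_B(k)\simeq\dm(k,\Q)\simeq\dm_{cdh}(k,\Q)$ (using \cite{CisDeg} theorem 16.1.4 and \cite{CisDeg2} corollary 5.9), quotes \cite{CisDeg} theorem 4.29 for compatibility of these sheafification functors with the six functors on compact objects, and then cites \cite{CisDeg2} proposition 8.10 directly for the identification $f_*f^!(\mathbf{1}_k)\simeq M^c(X)$ in $\dm_{cdh}(k,\Q)$. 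The detour through $\dm_{cdh}$ is not cosmetic: it is open whether $\dm(k,\Q)$ itself carries a full six-functor formalism, so "the equivalence is an equivalence of motivic $\infty$-categories that commutes with $f_!$ and $f^!$" is exactly the point that needs a citation, and you take it for granted.

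There is also a genuine error in your step (3). You assert as a characterization that $M^c(X)=f_!f^!(\mathbf{1}_k)$, with the variant $f_*f^!$ kicking in only when $f$ is proper. This is false: $f_!f^!(\mathbf{1}_k)$ and $f_*f^!(\mathbf{1}_k)$ differ for non-proper $f$, and it is the latter that is $M^c(X)$. Concretely, for $f:\bA^1\to\Spec(k)$ one has $f^!\mathbf{1}_k\simeq\mathbf{1}_{\bA^1}(1)[2]$ by purity, so $f_*f^!\mathbf{1}_k\simeq\mathbf{1}_k(1)[2]=M^c(\bA^1)$ by homotopy invariance, while $f_!f^!\mathbf{1}_k\simeq f_\#\mathbf{1}_{\bA^1}\simeq\mathbf{1}_k$, which is $M(\bA^1)$, not $M^c(\bA^1)$. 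Relatedly, $f_!f^!$ does not satisfy the localization triangle $g_!g^!\to f_!f^!\to h_!h^!$: precomposing/postcomposing the cofiber sequence $i_!i^!\to\id\to j_*j^*$ with $f^!$ and $f_!$ produces the mixed term $f_!j_*j^!f^!$, not $h_!h^!$, because $j_*\neq j_!$. So the dévissage you propose, with $f_!f^!$ as the running definition of $M^c$, collapses; the correct assignment satisfying proposition 3.34's cofiber sequence $g_*g^!\to f_*f^!\to h_*h^!$ is $f_*f^!$ throughout. You also rightly flag that "fitting into the same localization triangles and agreeing on proper pieces" does not by itself determine an object of a stable $\infty$-category up to isomorphism without also exhibiting compatible maps, which is why the paper instead appeals to the ready-made identification in \cite{CisDeg2} rather than rebuilding it by hand.
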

\begin{proof}
Note first of all that via \cite{CisDeg} theorem 16.1.4 and \cite{CisDeg2} corollary 5.9, the sheafification functors
\[
\dm_B(k)\to\dm(k, \Q)\to\dm_{cdh}(k, \Q)
\]
are all equivalences of symmetric monoidal triangulated categories.  Furthermore, the composite and intermediate sheafifications commute on compact objects with the six functors by \cite{CisDeg} theorem 4.29.  Finally, for any $k$-variety $X$ with structure morphism $f:X\to\Spec k$, one has that $f_*f^!(\mathbf1_k)\cong M^c(X)$ in $\dm_{cdh}(k, \Q)$ via \cite{CisDeg2} proposition 8.10.  Since the image of $f_*f^!(\mathbf1_k)$ considered in $\dm_B(k)$ and $M^c(X)$ considered in $\dm(k, \Q)$ coincide in $\dm_{cdh}(k, \Q)$, one must have that $f_*f^!(\mathbf1_k)$ maps under sheafification to an object isomorphic to $M^c(X)$ in $\dm(k, \Q)$.
\end{proof}

\begin{thm}
Considering a perfect base field $k$ and rational coefficients, the map 
\[
K(M^c_k):K(\Var_k)\to K(\DM_B^c(k))
\]
yields a derived lift of the Gillet-Soul\'e motivic measure.
\end{thm}
\begin{proof}
Recall that over rational coefficients, for Bondarko's weight map on the level of triangulated categories $t_\Q:DM_{gm}(k, \Q)\to K^\flat\text{Chow}(k, \Q)$ maps the compactly supported motive $M^c_k(X)$ to the weight complex $W(X)$ for any finite type $k$-scheme $X$.  Furthermore, $t_\Q$ induces an isomorophism on the level of Grothendieck Groups.  Noting that under the isomorphism $K_0(K^\flat\text{Chow}(k, \Q))\cong K_0(\text{Chow}(k, \Q))$, the image of the Gillet-Soul\'e motivic measure is precisely $[W(X)]$.  Thus, further noting that the Grothendieck group of a stable $\infty$-category is the same as the grothendieck group of its (triangulated) homotopy category, stringing all of our equivalences together, we have that the Gillet-Soul\'e motivic measure can be factored as
\[
K_0(\Var_k)\overset{K_0(M^c_k)}{\to}K_0(\DM_B^c(k))\cong K_0(DM_{gm}(k, \Q))\cong K_0(\text{Hot}^\flat\text{Chow}(k, \Q))\cong K_0(\text{Chow}(k, \Q)),
\]
which shows that $K(M^c_k):K(\Var_k)\to K(\DM_B^c(k))$ is a derived lift of the Gillet-Soul\'e motivic measure.
\end{proof}

\begin{rem}
Let us briefly work in the model of quasicategories and suppose the existence of a motivic $t$-structure on $\dm_{gm}(k, \Q)$ \cite{Bei} .  This, in particular, may be lifted to a $t$-structure on $\DM_B^c(k)$ via the above equivalence.  Now, Barwick's Theorem of the Heart states that for any bounded $t$-structure on a stable $\infty$-category $\cA$, one necessarily obtains an equivalence $K(\cA)\simeq K(\cA^\heartsuit)$, where the latter K-theory is taken as an exact $\infty$-category.  This exact K-theory coincides with the classical K-theory of an abelian category if $\cA^\heartsuit$ is actually just the nerve of an abelian category.  Thus, if one can show that the properties of the hypothetical motivic $t$-structure on $\dm_{gm}(k, \Q)$ imply that the lift to $\DM_B^c(k)$ is accessible (in the sense of Lurie \cite{Lur2} definition 1.4.4.12) and bounded upon lifting (it is known to be bounded on the triangulated homotopy category), then $K(\DM_B^c(k))$ must in fact be the K-theory of the hypothetical abelian category of mixed motives.
\end{rem}

\subsection{Some Other Approaches}

In addition to the machinery which we have built above, there are several other potential approaches that one can take to the problem of constructing a derived lift to the Gillet-Soul\'e motivic measure.  One of them, using the language of cofibration categories and topological triangulated categories, \cite{Schw} , is very likely equivalent to the approach above.  This approach is similar to one developed independently in the excellent paper of Braunling, Groechenig, and Nanavaty \cite{BGN}, although their approach makes use of a model for the K-theory of DG categories more akin to that defined above for stable categories. As such, it also seems likely equivalent. There is another possible approach one could take making use of recent work by Cisinski and Bunke \cite{BunCis} on the K-theory of additive categories which is almost surely not equivalent to any of the above.  We will discuss the former method, but forego further discussion of the latter two, referring the interested reader to \cite{BGN} and \cite{BunCis}.

Given any pretriangulated DG-category $\cC$, one can construct a cofibration category (which is, in this case, a Waldhausen category) lying between it and $\textbf{Ho}(\cC)$ as follows.  One defines the \emph{cycle category} of $\cC$, denoted $\cZ(\cC)$, to be the additive category with the same objects as $\cC$, but for any $X, Y\in\text{Ob}(\cC)$, $\Hom_{\cZ(\cC)}(X, Y)=\Ker(\Hom_\cC(X, Y)^0\overset{d}{\to}\Hom_\cC(X, Y)^1)$ (the "closed morphisms" of degree $0$ in $\Hom_\cC(X, Y)$) with the composition induced by that of $\cC$.

$\cZ(\cC)$ can be made into a stable cofibration category (which happens to be a Waldhausen category) in the following way:
\begin{itemize}
    \item Weak equivalences are those morphisms which become equivalences in $\Ho(\cC)$
    \item Cofibrations are those morphisms $f\in\Hom_{\cZ(\cC)}(X, Y)$ for $X, Y\in\text{Ob}(\cC)$ such that for any $Z\in\Ob(\cC)$, one has that the induced morphism
    \[
    \Hom_\cC(f, Z):\Hom_\cC(Y, Z)\to\Hom_\cC(X, Z)
    \]
    is a surjection
\end{itemize}
The proof that $\cZ(\cC)$ equipped with these weak equivalences and cofibrations is a stable cofibration category is \cite{Schw} proposition 3.2.

Recall further from \cite{Schw} remark 1.3 that if $\cC$ is a stable cofibration category, a triangle $X\to Y\to Z\to X[1]$ in $\Ho(\cC)$ is distinguished if and only if it is connected by a zigzag of weak equivalences to a cofiber sequence in $\cC$.  This will be used in the following theorem.

\begin{thm}
There is a weakly W-exact functor $M^c:\Var_S\to\cZ(\cD\cM_{gm}(S, R))$ sending each variety (smooth or otherwise) $X\in\Var_S$ to its compactly supported motive $M^c(X)$.
\end{thm}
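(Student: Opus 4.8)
The plan is to build the data of a weakly W-exact functor --- a covariant functor on cofibrations, a contravariant functor on complements, and a functor on weak equivalences, agreeing on objects --- directly from Voevodsky's point-set model of the compactly supported motive, in close parallel with the proof of the six functors theorem above. The essential differences are that the target here is the cycle category $\cZ(\cD\cM_{gm}(S,R))$, a genuine Waldhausen (indeed stable cofibration) category by \cite{Schw} Proposition~3.2 with $\Ho(\cZ(\cD\cM_{gm}(S,R)))\simeq\dm_{gm}(S,R)$, rather than a stable $\infty$-category, and that the construction must work over an arbitrary base $S$ with arbitrary coefficients $R$, where a six functors formalism for $\dm_{gm}(S,R)$ need not be available. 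First I would fix the standard model of $\cD\cM_{gm}(S,R)$ built from complexes of cdh-sheaves with transfers, in which every $S$-variety $X$ has its compactly supported complex $\mathbf{Z}^c_R(X)$ as an honest object, and set $M^c(X):=\mathbf{Z}^c_R(X)$, regarded as an object of $\cZ(\cD\cM_{gm}(S,R))$.

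Next I would extract the three component functors from the standard functorialities of equidimensional cycles (see \cite{CisDeg}). Proper pushforward of cycles is strictly functorial, and for a closed immersion $i\colon Z\hookrightarrow X$ of $S$-varieties it yields a degree-$0$ closed morphism $\mathbf{Z}^c_R(Z)\to\mathbf{Z}^c_R(X)$, giving $M^c_!\colon\cof(\Var_S)\to\cZ(\cD\cM_{gm}(S,R))$; restriction of cycles along an open immersion $j\colon U\hookrightarrow X$ is strictly contravariantly functorial and yields $\mathbf{Z}^c_R(X)\to\mathbf{Z}^c_R(U)$, giving $M^c{}^!\colon\comp(\Var_S)^{op}\to\cZ(\cD\cM_{gm}(S,R))$; and since weak equivalences in $\Var_S$ are isomorphisms --- which are both closed and open immersions --- $M^c_w$ is the common restriction of the two, with isomorphisms of varieties sent to isomorphisms and hence to weak equivalences. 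All three agree on objects by construction. For the compatibility axioms: for a Cartesian square with horizontal closed immersions and vertical open immersions, commutativity of the induced square follows from the compatibility of proper pushforward with open restriction of cycles along Cartesian squares, which holds already at the level of cycle complexes; the weak-equivalence squares (and their complement-analogue) are immediate from strict functoriality; and for a subtraction sequence $Z\overset{i}{\hookrightarrow}X\overset{j}{\underset{\circ}{\leftarrow}}U$ the localization short exact sequence of cdh-sheaves
\[
0\to\mathbf{Z}^c_R(Z)\to\mathbf{Z}^c_R(X)\to\mathbf{Z}^c_R(U)\to 0
\]
realizes, via the dictionary of \cite{Schw} Remark~1.3, a homotopy cocartesian square
\[
\begin{tikzcd}
M^c(Z)\arrow[d]\arrow[r,"i_!"] & M^c(X)\arrow[d,"j^!"]\\
0\arrow[r] & M^c(U)
\end{tikzcd}
\]
in $\cZ(\cD\cM_{gm}(S,R))$, which is exactly the subtraction axiom.

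The hard part will be the rigidity bookkeeping underlying the second step: one must verify honestly that a single point-set model of $\cD\cM_{gm}(S,R)$ simultaneously supports strict covariant functoriality along closed immersions, strict contravariant functoriality along open immersions, and a common value on objects, with the localization sequences present as genuine exact sequences of complexes --- and that this persists for singular and non-reduced $X$, which is precisely what the equidimensional-cycles formalism of Voevodsky, extended to general bases and coefficients by Cisinski--D\'eglise, is designed to provide. Once that is in place, the verification of the weakly W-exact axioms is formally identical to the six functors case. Finally I would note that for a perfect base field $k$ and $R=\Q$, the identification $f_*f^!(\mathbf{1}_k)\cong M^c(X)$ together with the equivalence $\DM_B^c(k)\simeq\dm_{gm}(k,\Q)$ shows that this functor induces on K-theory the same map as the six functors construction of $K(M^c_k)$, so that the two approaches to lifting the Gillet--Soul\'e motivic measure agree where both apply.
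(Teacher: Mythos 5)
Your proposal follows essentially the same route as the paper's proof: build the three component functors from the strict proper-pushforward and open-restriction functorialities of the compactly supported cycle complex, observe that weak equivalences are a special case of closed immersions, verify the mixed-variance square via base change for cycles (the paper cites Suslin--Voevodsky Proposition 3.6.5 here), and obtain the subtraction axiom from the localization cofiber sequence together with Schwede's Remark 1.3. The one cosmetic difference is that you invoke a strict short exact sequence of cdh-sheaves $0\to\mathbf{Z}^c_R(Z)\to\mathbf{Z}^c_R(X)\to\mathbf{Z}^c_R(U)\to 0$ before passing through Schwede's dictionary, whereas the paper works directly from the distinguished triangle in $\dm_{gm}(S,R)$ cited from Beilinson--Vologodsky; note that having the strict short exact sequence does not by itself exhibit a cofiber sequence in $\cZ(\cD\cM_{gm}(S,R))$, since cofibrations there are defined by the surjectivity-on-$\Hom$ condition rather than degreewise injectivity, so the detour through the homotopy category via Schwede's Remark 1.3 is genuinely needed in both accounts.
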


\begin{proof}
This proof will proceed in several stages.  First, we need to construct the three functors that will form the basis of the rest of the proof:
\begin{itemize}
    \item Recall that the cofibrations in the SW-category $\Var_S$ are merely the closed immersions.  In particular, closed immersions are proper, so for any closed immersion $f:X\hookrightarrow Y$ in $\Var_S$, one obtains the map $f_*:M^c(X)\to M^c(Y)$ functorially.  This yields the data of a functor $\cof(\Var_S)\to\cZ(\cD\cM_{gm}(S, R))$
    \item Recall that complement maps in the SW-category $\Var_S$ are the open immersions.  Since open immersions are smooth, for any open immersion $f:X\overset{\circ}{\to}Y$, one obtains the map $f^*:M^c(Y)\to M^c(X)$ contravariantly and functorially.  This yields the data of a functor $\comp(\Var_S)^c\to\cZ(\cD\cM_{gm}(S, R))$
    \item Since weak equivalences in the SW-category $\Var_S$ are merely isomorphisms, and are thus closed immersions, the lower star construction yields a functorial mapping into $\cZ(\cD\cM_{gm}(S, R))$.  This map is furthermore in $w\cZ(\cD\cM_{gm}(S, R))$ in this case because isomorphisms map functorially to isomorphisms, and all isomorphisms are in $w\cZ(\cD\cM_{gm}(S, R))$
\end{itemize}
Now we can merely verify that all the appropriate properties are satisfied.
\begin{itemize}
    \item This is an application of \cite{SusVoe} proposition 3.6.5.  In particular, given any cartesian square
    \[
    \begin{tikzcd}
 X\arrow[d, "i"]\arrow[r, "j"] & Z \arrow[d, "i'"]\\
   Y \arrow[r, "j'"]& W
\end{tikzcd},
    \]
    of schemes with horizontal arrows closed immersions and vertical arrows open immersions, one obtains a commutative diagram
    \[
    \begin{tikzcd}
 R^c[X]\arrow[r, hook, "j_*"] & R^c[Z] \\
  R^c[Y]\arrow[u, "i^*"]\arrow[r, hook, "j'_*"]& R^c[W]\arrow[u, "i'^*"]
\end{tikzcd}
    \]
    on the level of Nisnevich sheaves with transfers and hence a commutative diagram
    \[
    \begin{tikzcd}
 M^c(X)\arrow[r, hook, "j_*"] & M^c(Z) \\
  M^c(Y)\arrow[u, "i^*"]\arrow[r, hook, "j'_*"]& M^c(W)\arrow[u, "i'^*"]
\end{tikzcd}
    \]
    on the level of motives
    \item  As was proven in \cite{BeiVol} (see equation 6.9.1), given a closed immersion $i:Z\hookrightarrow X$ with complementary open immersion $j: U=X-Z\overset{\circ}{\to}X$, one has a distinguished triangle $M^c(Z)\overset{i_*}{\to}M^c(X)\overset{j^*}{\to}M^c(U)\to M^c(Z)[1]$ in $DM_{gm}(S, R)$.  By \cite{Schw} remark 1.3, that this triangle is distinguished in the homotopy category implies that $M^c(Z)\overset{i_*}{\to}M^c(X)\overset{j^*}{\to}M^c(U)$ is weakly equivalent via a zigzag of weak equivalences to a cofiber sequence.  In other words, noting that $M^c(\emptyset)\simeq0$, we have a homotopy cocartesian square
    \[
    \begin{tikzcd}
 M^c(Z)\arrow[d]\arrow[r, "i_*"] & M^c(X) \arrow[d, "j^*"]\\
   M^c(\emptyset) \arrow[r]& M^c(U)
\end{tikzcd}
    \]
    \item  Given any commutative diagram of schemes 
    \[
    \begin{tikzcd}
 X\arrow[d, "g"]\arrow[r, hook, "f"] & Z \arrow[d, "g'"]\\
  Y \arrow[r, hook, "f'"]& W
\end{tikzcd}
    \]
    
    with vertical morphisms isomorphisms and horizontal morphisms closed immersions, noting that the direct image is a functorial assignment, one gets a commutative square
    \[
    \begin{tikzcd}
 M^c(X)\arrow[d, "g_*"]\arrow[r, hook, "f_*"] & M^c(Z) \arrow[d, "g'_*"]\\
  M^c(Y) \arrow[r, hook, "f'_*"]& M^c(W)
\end{tikzcd}.
    \]
    If one instead assumes that the horizontal morphisms are open immersions, one may note that any commutative square with vertical morphisms given by isomorphisms is cartesian, and the result then follows from the result for the first bullet point.
    
\end{itemize}
\end{proof}

\section{Conclusion}
The procedure we have outlined above allows us to go from abstract six functors formalisms to derived motivic measures.  Now that we have the method, and an application of it, there are many more questions that need answering, and many more possible directions in which to take this work.

First and foremost, one might seek to upgrade many of the other categories defined by Cisinski and D\'eglise and upgrade as well the many premotivic adjunctions we have already discussed on the level of triangulated categories to their respective $\infty$-categorical analogues.

In recent work, Khan and Ravi extended Khan's work discussed above to more general categories of (derived) algebraic stacks (see \cite{KhaRav} and \cite{Khan3}).  This opens up exciting new avenues for the work we undertake here.  In addition to potentially allowing for exploration of the K-theory of algebraic stacks via derived motivic measures of the type discussed above, it opens up the possibility of looking at $G$-equivariant motivic measures.  Among other things, this would allow one to further generalize aspects of the work found in \cite{LMM}.

Another immediate direction in which to take this work would be to compare it to that found in \cite{CWZ} .  In it, Campbell, Wolfson, and Zakharevich define a spectral lifting of the Hasse-Weil zeta function making use of compactly supported $l$-adic cohomology.  One has reason to strongly suspect that, using $l$-adic sheaves as our target, one might obtain the same map of K-theory spectra up to homotopy.

More generally, this approach seems to allow one to spectrally lift many of the classical motivic measures, especially those related to categories of sheaves.  All of this requires further investigation.  This will perhaps be carried out in a later version of this paper, or in a sequel.

\section{Acknowledgements}

The author would like to thank Matilde Marcolli for many vital and engaging discussions during the formation of this paper.  In addition, they would like to thank Emily Riehl for a very valuable email exchange regarding several of the categorical constructions necessary for the paper and Justin Campbell for a discussion of equivariant categories of sheaves that will likely form the basis for further work.  Finally, the author would like to thank Konrad Pilch and Victor Zhang for comments on an earlier draft of this paper.


\begin{thebibliography}{99}

\bibitem{Ayo} J. Ayoub,
{\em Les six op\'erations de Grothendieck et le formalisme des cycles \'evanescents dans le monde motivique (I, II)},

Astérisque Volumes 314, 315 (2007).

ISBN: 9782856292440 and 978-2-85629-245-7


\bibitem{Bar} C. Barwick,
{\em On the algebraic K-theory of higher categories},

Journal of Topology.
Volume 9 (2016), 245-347.

https://doi.org/10.1112/jtopol/jtv042


\bibitem{Bar2} C. Barwick,
{\em On exact $\infty$-categories and the Theorem of the Heart},

Compositio Math.
Volume 151 (2015), 2160 - 2186.

 https://doi.org/10.1112/S0010437X15007447
 

\bibitem{Bei} A. Beilinson,
{\em Remarks on Grothendieck's standard conjectures},

https://arxiv.org/abs/1006.1116


\bibitem{BeiVol} A. Beilinson and V. Vologodsky,
{\em A DG Guide to Voevodsky’s Motives},

GAFA, Geom. funct. anal.
Volume 17 (2008), 1709 – 1787.

https://doi.org/10.1007/s00039-007-0644-5


\bibitem{Ber} J. Bergner,
{\em The Homotopy Theory of $(\infty, 1)$-Categories},

London Mathematical Society Student Texts.
Volume 90 (2018), 273 pp.

Online ISBN: 9781316181874


\bibitem{Bitt} F. Bittner,
{\em The universal Euler characteristic for varieties of characteristic zero},

Compositio Math.
Volume 140 Issue 4 (2004), 1011 - 1032.

https://doi.org/10.1112/S0010437X03000617


\bibitem{BGT} A. J. Blumberg, D. Gepner, G. Tabuada,
{\em A universal characterization of higher algebraic K–theory},

Geom. Topol. 
Volume 17 Issue 2 (2013), 733-838. 

https://doi.org/10.2140/gt.2013.17.733


\bibitem{Bon} M. Bondarko,
{\em Differential graded motives: weight complex, weight filtrations and spectral sequences for realizations; Voevodsky versus Hanamura},

Journal of the Institute of Mathematics of Jussieu.
Volume 8 Issue 1 (2009), 39 - 97.

https://doi.org/10.1017/S147474800800011X


\bibitem{BGN} O. Braunling, M. Groechenig, and A. Nanavaty,
{\em THE STANDARD REALIZATIONS FOR THE K-THEORY OF VARIETIES},

https://arxiv.org/abs/2107.01168

 
 \bibitem{BunCis} U. Bunke and D. C. Cisinski,
{\em A universal coarse K-theory},

New York J. Math.
Volume 26 (2020), 1–27.

http://nyjm.albany.edu/j/2020/26-1.html


\bibitem{Cam} J. Campbell,
{\em THE K-THEORY SPECTRUM OF VARIETIES},

Transactions of the American Mathematical Society.
Volume 371 Issue 11 (2019), 7845–7884.

https://doi.org/10.1090/tran/7648


\bibitem{CWZ} J. Campbell, J. Wolfson, I. Zakharevich,
{\em Derived $l$-adic zeta functions},

Advances in Mathematics.
Volume 354 (2019), 106760

https://doi.org/10.1016/j.aim.2019.106760


\bibitem{Cis} D. C. Cisinski,
{\em Invariance de la K-Th\'eorie par \'equivalences d\'eriv\'ees},

J. K-Theory.
Volume 6 (2010), 505–546.

https://doi.org/10.2140/gt.2013.17.733


\bibitem{CisDeg} D. C. Cisinski and F. D\'eglise,
{\em Triangulated Categories of Mixed Motives},

Springer Monographs in Mathematics. 
(2019) pp. 406

Hardcover ISBN: 978-3-030-33241-9


\bibitem{CisDeg2} D. C. Cisinski and F. D\'eglise,
{\em Integral Mixed Motives in Equal Characteristic},

Documenta Math.
Extra Volume: Alexander S. Merkurjev's Sixtieth Birthday (2015), 145-194.


\bibitem{CisDeg3} D. C. Cisinski and F. D\'eglise,
{\em \'Etale motives},

Compositio Math.
Volume 152 Issue 3 (2016), 556 - 666.

https://doi.org/10.1112/S0010437X15007459


\bibitem{DFJK} F. D\'eglise , J. Fasel , F. Jin , A. Khan,
{\em On the rational motivic homotopy category},

Journal de l’\'Ecole polytechnique — Math\'ematiques.
Volume 8 (2021), 533-583.

https://doi.org/10.5802/jep.153


\bibitem{Del} P. Deligne,
{\em S\'eminaire de G\'eom\'etrie Alg\'ebrique du Bois Marie - Cohomologie \'etale - (SGA $4\frac12$)},

Lecture Notes in Mathematics (1977), 569 pp. 

ISBN: 978-3-540-08066-4


\bibitem{Del2} P. Deligne,
{\em Voevodsky's lectures on cross functors, Fall 2001},

Unpublished lecture notes

https://www.math.ias.edu/vladimir/node/94


\bibitem{Dri} V. Drinfeld,
{\em DG quotients of DG categories},

Journal of Algebra.
Volume 272 Issue 2 (2004), 643-691.

https://doi.org/10.1016/j.jalgebra.2003.05.001


\bibitem{DLORV} B.I. Dundas, M. Levine, P.A. \O stv\ae r, O. R\"ondigs, V. Voevodsky,
{\em Motivic Homotopy Theory: Lectures at a Summer School in Nordfjordeid, Norway, August 2002},

Universitext.
(2002), 226 pp.

Softcover ISBN: 978-3-540-45895-1


\bibitem{ElmKol} E. Elmanto and H. Kolderup,
{\em A universal characterization of higher algebraic K–theory},

Annals of K-Theory.
Volume 5 Issue 2 (2020), 327–355.

https://doi.org/10.2140/akt.2020.5.327


\bibitem{GaiLur} D. Gaitsgory and J. Lurie,
{\em Weil's Conjecture for Function Fields: Volume I},

Annals of Mathematics Studies.
Volume 199 (2019), 320 pp.

Print ISBN: 9780691182131


\bibitem{GaiRoz} D. Gaitsgory and N. Rozenblyum,
{\em A Study in Derived Algebraic Geometry: Volumes I and II},

Mathematical Surveys and Monographs.
Volume 221 (2017) 969 pp.

Print ISBN: 978-1-4704-5306-0


\bibitem{GilSou} H. Gillet and C. Soul\'e,
{\em Descent, motives and K-theory},

J.  reine  angew.Math.
Volume 478 (1996),  127—176.

https://doi.org/10.1515/crll.1996.478.127


\bibitem{Hov} M. Hovey,
{\em Model Categories},

Mathematical Surveys and Monographs.
Volume: 63; 1999; 209 pp.

Print ISBN: 978-0-8218-4361-1


\bibitem{Hoy} M. Hoyois,
{\em The six operations in equivariant motivic homotopy theory},

Advances in Mathematics.
Volume 305(2017), 197-279,

https://doi.org/10.1016/j.aim.2016.09.031


\bibitem{Khan} A. Khan,
{\em VOEVODSKY’S CRITERION FOR CONSTRUCTIBLE CATEGORIES OF COEFFICIENTS},

https://www.preschema.com/papers/six.pdf


\bibitem{Khan2} A. Khan,
{\em THE MOREL–VOEVODSKY LOCALIZATION THEOREM IN SPECTRAL ALGEBRAIC GEOMETRY},

https://www.preschema.com/papers/localization.pdf


\bibitem{Khan3} A. Khan,
{\em VIRTUAL FUNDAMENTAL CLASSES OF DERIVED STACKS I},

https://www.preschema.com/papers/virtual.pdf


\bibitem{KhaRav} A. Khan and C. Ravi,
{\em Generalized cohomology theories for algebraic stacks},

https://arxiv.org/abs/2106.15001


\bibitem{LMM} J. Lieber, Yu I. Manin, M. Marcolli,
{\em Bost-Connes systems and F1-structures in Grothendieck rings, spectra, and Nori motives},

To appear in Facets of Algebraic Geometry: Volume in Honour of William Fulton's 80th Birthday

https://arxiv.org/abs/1901.00020


\bibitem{Lur} J. Lurie,
{\em Higher Topos Theory},

Princeton University Press. 
Volume 170 (2009), 944 pp. 

ISBN: 9780691140490


\bibitem{Lur2} J. Lurie,
{\em Higher Algebra},

https://www.math.ias.edu/~lurie/papers/HA.pdf


\bibitem{MVW} C. Mazza, V. Voevodsky, C. Weibel,
{\em Lecture notes on motivic cohomology},

Clay Mathematics Monographs.
Volume 2 (2006), 216 pp.

hPrint ISBN: 978-0-8218-5321-4


\bibitem{MNP} J. P. Murre, J. Nagel, C. Peters,
{\em Lectures on the Theory of Pure Motives},

University Lecture Series.
Volume 61 (2013) 149 pp.

Print ISBN: 978-0-8218-9434-7


\bibitem{NiSch} T. Nikolaus and P. Scholze,
{\em On topological cyclic homology},

Acta Math. 

Volume 221 Issue 2 (2018), 203-409.

https://doi.org/10.4310/ACTA.2018.v221.n2.a1


\bibitem{OstRon} P. A. \O stv\ae r and O. R\"ondigs,
{\em Modules over motivic cohomology},

Advances in Mathematics
Volume 219 Issue 2 (2008), 689-727.

https://doi.org/10.1016/j.aim.2008.05.013


\bibitem{RicSch} T. Richarz and J. Scholbach,
{\em THE INTERSECTION MOTIVE OF THE MODULI STACK OF SHTUKAS},

Forum of Mathematics, Sigma.
Volume 8 (2020), e8

https://doi.org/10.1017/fms.2019.32


 \bibitem{RieVer} E. Riehl and D. Verity,
{\em Elements of $\infty$-Category Theory},

www.math.jhu.edu/{$\sim$}eriehl/elements.pdf


\bibitem{Ron} O. R\"ondigs,
{\em The Grothendieck ring of varieties and algebraic K-theory of spaces},

https://arxiv.org/pdf/1611.09327.pdf


\bibitem{Schw} S. Schwede,
{\em Topological triangulated categories},

https://arxiv.org/abs/1201.0899


\bibitem{SusVoe} A. Suslin and V. Voevodsky,
{\em Relative cycles and Chow sheaves},

Annals of Mathematics Studies.
Volume 143 (2000), 10–86.

https://doi.org/10.1515/9781400837120.10


\bibitem{Toe} B. To\"en,
{\em Lectures on DG-categories},

Topics in Algebraic and Topological K-Theory (2011) 243-301

https://doi.org/10.1007/978-3-642-15708-0


\bibitem{ToeVez} B. To\"en and G. Vezzosi,
{\em A remark on K-theory and S-categories},

Topology.
Volume 43 Issue 4 (2004), 765-791.

https://doi.org/10.1016/j.top.2003.10.008


\bibitem{ToeVez2} B. To\"en and G. Vezzosi,
{\em Homotopical algebraic geometry I: topos theory},

Advances in Mathematics.
Volume 193 Issue 2 (2005), 257-372

https://doi.org/10.1016/j.aim.2004.05.004


\bibitem{Voe} V. Voevodsky,
{\em Triangulated categories of motives over a field},

Cycles, Transfers, and Motivic Homology Theories, Ann. of Math. Stud.
Volume 143 (2000), 188-238.

https://doi.org/10.1515/9781400837120.188


\bibitem{Wald} F. Waldhausen,
{\em Algebraic K-theory of spaces},

Algebraic and Geometric Topology (1983) 318-419.

https://doi.org/10.2140/gt.2013.17.733


\bibitem{Wei} C. A. Weibel,
{\em The K-book: An Introduction to Algebraic K-theory},

Graduate Studies in Mathematics.
Volume: 145(2013) 618 pp

Print ISBN: 978-0-8218-9132-2


\bibitem{Zakh} I. Zakharevich,
{\em The K-theory of assemblers},

Advances in Mathematics.
Volume 304 (2017), 1176-1218.

https://doi.org/10.1016/j.aim.2016.08.045


\bibitem{Zakh2} I. Zakharevich,
{\em The annihilator of the Lefschetz motive},

Duke Math. J.
Volume 166 Issue 11 (2017), 1989-2022.

 https://doi.org/10.1215/00127094-0000016X


\end{thebibliography}
\end{document}